\DeclareMathOperator{\divv }{div}
\DeclareMathOperator{\diff }{\delta}
\newcommand{\D}{{\rm {\small -}\kern - 6pt D}}
\newtheorem{theorem}{Theorem}
\newtheorem{lemma}[theorem]{Lemma}
\newtheorem{remark}[theorem]{Remark}
\newtheorem{definition}{Definition}
\begin{document}

\title[Gas mixtures with Maxwell-Stefan diffusion]{A theory of generalised solutions for ideal gas mixtures with Maxwell-Stefan diffusion}

\author[P.-E. Druet]{Pierre-Etienne Druet}
\address{Weierstrass Institute, Mohrenstr. 39, 10117 Berlin, Germany}
\email{pierre-etienne.druet@wias-berlin.de}

\date{\today}

\thanks{The first author was supported by the grant D1117/1-1 of the 
German Science Foundation (DFG)}

\begin{abstract}
After the pioneering work by Giovangigli on mathematics of multicomponent flows, several attempts were made to introduce global weak solutions for the PDEs describing the dynamics of fluid mixtures. While the incompressible case with constant density was enlighted well enough due to results by Chen and J\"ungel (isothermal case), or Marion and Temam, some open questions remain for the weak solution theory of gas mixtures with their corresponding equations of mixed parabolic--hyperbolic type. For instance, Mucha, Pokorny and Zatorska showed the possibility to stabilise the hyperbolic component by means of the Bresch-Desjardins technique and a regularisation of pressure preventing vacuum. The result by Dreyer, Druet, Gajewski and Guhlke avoids \emph{ex machina} stabilisations, but the mathematical assumption that the Onsager matrix is uniformly positive on certain subspaces leads, in the dilute limit, to infinite diffusion velocities which are not compatible with the Maxwell-Stefan form of diffusion fluxes. In this paper, we prove the existence of global weak solutions for isothermal and ideal compressible mixtures with natural diffusion. The main new tool is an asymptotic condition imposed at low pressure on the binary Maxwell-Stefan diffusivities, which compensates possibly extreme behaviour of weak solutions in the rarefied regime.  
\end{abstract}

\keywords{Multicomponent flow, ideal mixture, compressible fluid, diffusion, system of mixed--type, global-in-time existence, weak solutions, transport coefficients}

\subjclass[2010]{35M33, 35Q30, 76N10, 35D30, 35Q35, 35Q79, 76R50}

\maketitle


\section{Introduction: The Maxwell-Stefan Navier-Stokes equations for an isothermal ideal fluid mixture}



The fluid  mixture here under consideration consists of $N\geq 2$ chemical species ${\rm A}_1,\ldots, {\rm A}_N$ subject to the following equations of mass and momentum balance
\begin{alignat}{2}
\label{mass}\partial_t \rho_i + \divv( \rho_i \, v + J^i) & = 0 & &  \text{ for } i = 1,\ldots,N\\
\label{momentum}\partial_t (\varrho \, v) + \divv( \varrho \, v\otimes v - \mathbb{S}) + \nabla p & = \sum_{i=1}^N \rho_i \, b^i(x, \, t)  & & \, .
\end{alignat}
The equations \eqref{mass} are the partial mass balances for the partial mass densities $\rho_1,\ldots,\rho_N$ of the species. We shall use the abbreviation $\varrho := \sum_{i=1}^N \rho_i$ for the total mass density. The barycentric velocity field of the fluid is called $v$ and the thermodynamic pressure is $p$. In the Navier-Stokes equations \eqref{momentum}, the viscous stress tensor is denoted $\mathbb{S}$. We overall consider Newtonian stress with constant coefficients and write $\mathbb{S} = \mathbb{S}(\nabla v)$. The vector fields $b^1,\ldots,b^N$ model the external body forces acting on each species per unit mass. The \emph{diffusion fluxes} $J^1,\ldots,J^N$ are defined to be the non-convective part of the mass fluxes, in view of which they obey the necessary side-condition $\sum_{i=1}^N J^i = 0$. 
One among the possibilities to close the model in thermodynamic consistent way, respecting also the constraint that the net diffusion flux vanishes, are the Maxwell-Stefan closure equations. In this approach, described a. o. by \cite{giovan}, \cite{bothedreyer}, \cite{bothedruetMS}, the diffusion fluxes $J^1,\ldots,J^N$ are defined implicitly as the solutions to a linear algebraic system with $N\times 3$ rows. For $i =1,\ldots,N$ the vector identities
\begin{align}\label{DIFFUSFLUX}
(\sum_{k \neq i} f_{i,k}\, y_k) \, J^i - y_i \, \sum_{k \neq i} f_{i,k} \, J^k = - \rho_i \, \sum_{k = 1}^N \, (\delta_{i,k} - y_k) \, (\nabla \mu_k - b^k)  \, 
\end{align}
are required to be valid in $\mathbb{R}^3$. For $1 \leq i < k \leq N$, the functions $f_{i,k}$ are given friction coefficients representing binary friction forces between the species; We define $f_{k,i} := f_{i,k}$ whenever $i > k$ (symmetric interaction). The friction coefficients $f_{i,k}$ are the inverse Maxwell-Stefan diffusivities $\D_{ik}$, but we shall avoid this heavier notation. We moreover assume that these coefficients are strictly positive functions of the state variables.
In \eqref{DIFFUSFLUX}, we write $\nabla \mu_k$ and $b^k$ instead of the correct $\nabla (\mu_k/\theta)$ and $b^k/\theta$ for the sake of visual convenience. This is justified as the temperature $\theta$ is overall assumed constant. We moreover denote $y = \hat{y}(\rho)$ the vector of the mass fractions: $y = (y_1, \ldots, y_N)$ with $y_i := \rho_i/\sum_{j=1}^N \rho_j =: \hat{y}_i(\rho)$ for $i=1,\ldots,N$. The quantities $d_i := \rho_i \, \sum_{k = 1}^N  (\delta_{i,k} - y_k) \, (\nabla \mu_k - b^k)$ on the right-hand side of \eqref{DIFFUSFLUX} are called the generalised driving forces for diffusion.
With the help of the matrix $B(\rho) = \{b_{i,k}(\rho)\}$ where 
\begin{align}\label{matrixB}
b_{i,k}(\rho) = - f_{i,k}(\rho) \, \hat{y}_i(\rho) \text{ for } i \neq k, \quad b_{i,i}(\rho) := \sum_{k \neq i} f_{i,k}(\rho)\, \hat{y}_k(\rho) \, , 
\end{align}
the equations \eqref{DIFFUSFLUX} can also be rephrased in the more compact way
\begin{align}\label{maxsteftensor}
 B(\rho) \, J = - R \, P(y) \, (\nabla \mu - b) \, ,
\end{align}
where $R = \text{diag}(\rho_1,\, \ldots,\rho_N)$, $P(y)$ is the projector $I - 1^N \otimes y$ and $1^N = (1,\ldots,1) \in \mathbb{R}^N$. 

In order to close the model \eqref{mass}, \eqref{momentum} with \eqref{DIFFUSFLUX}, we must specify the mass based chemical potentials $\mu_1,\ldots,\mu_N$ in their relationship to the state variables and the pressure. In this paper we restrict our considerations to so-called ideal mixtures. In this case the chemical potentials obey
\begin{align}\label{IDEALCHEMPOT}
\mu_i = \hat{\mu}_i(p, \, x_i) :=  g_i(p) + \frac{ k_B \, \theta}{m_i} \, \ln x_i \, .
\end{align}
The function $g_i(p)$ is given, it is the Gibbs-energy of constituent $i$ at temperature $\theta$. In other words, denoting $\hat{\rho}_i(\theta,\, p)$ the mass density of the $i$th constituent as function of temperature and pressure, then the derivative $g_i^{\prime}(p)$ is nothing else but $1/\hat{\rho}_i(\theta, \, p)$. 

Concerning the second contribution in \eqref{IDEALCHEMPOT}, the quantities $x_1,\ldots,x_N$ denote the number fractions. They are related to the mass densities via $x_i= := \rho_i/(m_i \, \sum_{j=1}^N (\rho_j/m_j)) =: \hat{x}_i(\rho) $. Here and throughout the paper, the positive constants $m_1, \ldots, m_N$ denote the elementary masses of the species.

In order to integrate the definition \eqref{IDEALCHEMPOT} into a thermodynamically consistent framework, we moreover have to provide the definition of $\mu$ as variational derivative of the Helmholtz free energy of the system. Here we shall rely on the assumption that the Helmholtz free energy possesses only a bulk contribution with density $\varrho\psi$. Moreover this function possesses the special form $ \varrho\psi = h(\rho_1, \ldots, \rho_N)$, where $h: \, \mathbb{R}^N_+ \rightarrow \mathbb{R}$ is convex and sufficiently smooth on the entire range of admissible mass densities. Then $\mu_1,\ldots,\mu_N$ are related to the mass densities $\rho_1, \ldots,\rho_N$ via
\begin{align}\label{CHEMPOT}
\mu_i = \partial_{\rho_i}h(\rho_1, \ldots, \rho_N) \, .
\end{align}
To make the link between, on the one hand, the free energy $h$ as function of the mass densities and, on the other hand, the prescribed chemical potentials \eqref{IDEALCHEMPOT}, we use the Euler equation that connects all state variables
\begin{align}\label{GIBBSDUHEMEULER}
p = -h(\rho_1,\ldots,\rho_N) + \sum_{i=1}^N \rho_i \, \mu_i = -h(\rho_1,\ldots,\rho_N) + \sum_{i=1}^N \rho_i\, \partial_{\rho_i}h(\rho_1, \ldots, \rho_N) \, 
\end{align}
where for simplicity $\theta$ does not occur, as it is a constant. We solve \eqref{GIBBSDUHEMEULER} with the integration method proposed in the paper \cite{bothedruetfe}. We first define a function of pressure and mass densities via $V(p, \,\rho) := \sum_{i=1}^N g_i^{\prime}(p) \, \rho_i$. The dimensionless function $V(p, \,\rho)$ acts as a 'volume potential' and it describes the contribution of volume extension to the free energy density. Then the resulting pressure state equation $p = \hat{p}(\rho)$ is implicitly obtained from the identity
\begin{align}\label{PRESSURESTATE}
V(\hat{p}(\rho), \,\rho) = \sum_{i=1}^N g_i^{\prime}(\hat{p}(\rho)) \, \rho_i = 1 \, .
\end{align}
Under the assumption - to be made more precise below - that the functions $p \mapsto g_i(p)$ are sufficiently smooth, concave and non-decreasing, the implicit function $\rho \mapsto \hat{p}(\rho)$ is uniquely determined by \eqref{PRESSURESTATE} and sufficiently smooth. The Helmholtz free energy density $h$ compatible with \eqref{IDEALCHEMPOT} and \eqref{CHEMPOT} assumes the form
\begin{align}\label{FE}
 h(\rho) = \sum_{i=1}^N \rho_i \, \Big(g_i(\hat{p}(\rho)) + \frac{C(\theta)}{m_i} \, \ln \hat{x}_i(\rho) + c_i(\theta)\Big) - \hat{p}(\rho) \, .
\end{align}
Up to the integration constants $C(\theta)$ and $c_1(\theta),\ldots,c_N(\theta)$, the function $h$ is likewise uniquely determined by the definition \eqref{CHEMPOT}.

As an illustration of these concepts and definitions, consider the case of all $g_i$ are logarithmic, more precisely in the form 
\begin{align}\label{giexample}
g_i(p) = g_i^0 + p_0 \, \bar{v}_i^0 \, \ln (p/p_0) \, , 
\end{align}
where $g^0 \in \mathbb{R}^N$ are reference values, $\bar{v}^0_1, \ldots, \bar{v}^0_N > 0$ are reference volumes per unit mass, and $p_0 > 0$ is a reference pressure. Then, inversion of \eqref{PRESSURESTATE} yields a state equation $\hat{p}(\rho) := p_0 \, \sum_{i=1}^N \bar{v}_i^0 \, \rho_i$, and the associated form of the free energy is easily gained from \eqref{FE}. 

Let us remark in passing that we call a mixture ideal if it is volume additive -- see for instance \cite{brdicka}, section 4.1.2. If the chemical potentials are subject to \eqref{IDEALCHEMPOT}, the specific mixture volume is $\bar{v} = \sum_{i=1}^N g_i^{\prime}(p) \, y_i$, and it is nothing else but the sum of the specific volumes of the constituents weighted by their relative contributions to the total mass. Reversely, it can be shown in rigorous mathematical framework (cf. \cite{bothedruetfe}) that the assumption of volume additivity generates the splitting \eqref{IDEALCHEMPOT} of the chemical potentials, and that the only compatible form of the Helmholtz potential is given by \eqref{FE}.

\textbf{The initial-boundary-value problem.} The universal equations \eqref{mass}, \eqref{momentum} supplemented by the Maxwell-Stefan closure equations \eqref{DIFFUSFLUX}, in which $\mu$ and $p$ are defined via \eqref{IDEALCHEMPOT} and \eqref{PRESSURESTATE}, now form a closed system of partial differential equations for the densities $\rho_1,\ldots,\rho_N$ and the velocities $v_1, \, v_2,\, v_3$. We call it the Maxwell-Stefan Navier-Stokes system for an isothermal ideal compressible fluid mixture.

In this paper, we shall investigate the resolvability of typical initial-boundary-value problems for this PDE-system in a cylindrical domain $Q_T := \Omega \times ]0,T[$ with a bounded, open, connected cross-section $\Omega \subset \mathbb{R}^3$ and $T>0$ a finite time. 
We consider the initial conditions
\begin{alignat}{2}\label{initial0rho}
\rho_i(x,\, 0) & = \rho^0_i(x) & & \text{ for } x \in \Omega, \, i = 1,\ldots, N \, ,\\
\label{initial0v} v_j(x, \, 0) & = v^0_j(x) & & \text{ for } x \in \Omega, \, j = 1,2,3\, .
\end{alignat}
For simplicity, we consider only linear homogeneous boundary conditions on the lateral surface $S_T := \partial \Omega \times ]0,T[$
\begin{alignat}{2}
\label{lateral0v} v & = 0 & &\text{ on } S_T \\
\label{lateral0q} \nu \cdot J^i & = 0 & &\text{ on } S_T \text{ for } i = 1,\ldots,N \, .
\end{alignat}
\begin{definition}\label{ProblemP}
The initial--boundary--value problem consisting of the Maxwell-Stefan Navier-Stokes system \eqref{mass}, \eqref{momentum}, \eqref{DIFFUSFLUX} on $Q_T$ with the initial conditions \eqref{initial0rho}, \eqref{initial0v} on $\{0\} \times \Omega$ and \eqref{lateral0v}, \eqref{lateral0q} on $S_T$ is denoted $(P)$.
\end{definition}

\textbf{Some notations.} In this paper, vectors of mass densities $\rho$ live in the state space $$\mathbb{R}^N_+ := \{\rho \in \mathbb{R}^N \, : \, \rho_i > 0 \text{ for } i =1,\ldots,N\} \, .$$

Vectors of mass fractions $y$ and number fractions $x$ live on the hyper-surface $$S^1_+ := \{y \in \mathbb{R}^N_+ \, : \, \sum_{i=1}^N y_i = 1\} \, .$$ 

Given a state $\rho \in \mathbb{R}^N_+$, the associated mass and number fractions are given as
\begin{align*}
y_i = \hat{y}_i(\rho) := \frac{\rho_i}{\sum_{j=1}^N \rho_j}, \quad  x_i = \hat{x}_i(\rho) := \frac{\rho_i}{m_i \, \hat{n}(\rho)}, \quad \hat{n}(\rho) := \sum_{j=1}^N \frac{\rho_j}{m_j} \, .
\end{align*}
We shall encounter several tensors of size $N\times 3$ like for instance the diffusion fluxes $J$ or the body forces $b$. For such rectangular matrices, the line index is the upward one, and the column index the downward one. 

Scalar products are denoted $ \cdot$ in $\mathbb{R}^N$ and $\mathbb{R}^3$ indifferently. For two $m \times n$ matrices $A, \, B$, we denote $A \, : \, B = \sum_{i=1}^m \sum_{k=1}^n A_{i,k} \, B_{i,k}$.

For a differentiable function $f$ of the state variables, we denote $\partial_{\rho_i}f$, or alternatively $f_{\rho_i}$ the partial derivatives, and $\nabla_{\rho} f = (\partial_{\rho_1}f,\ldots,\partial_{\rho_N} f) = f_{\rho}$ is the gradient.

If the vector $\rho$ is specified by the context, we denote $R = R(\rho)$ the diagonal matrix $\text{diag}(\rho_1,\, \ldots,\rho_N)$, and $P(y) = P(\hat{y}(\rho))$ is the barycentric projector $I - 1^N \otimes y$ on $\mathbb{R}^N$.

We denote $1^N = (1, \ldots, 1) \in \mathbb{R}^N$, and $\{1^N\}^{\perp} \subset \mathbb{R}^N$ is the orthogonal complement of the span of $1^N$. The orthogonal projection $I - \frac{1}{N} \, 1^N \otimes 1^N$ is denoted $\mathcal{P}_{\{1^N\}^{\perp}}$.

In the identity \eqref{FE} we shall choose $C(\theta) = 1$ and $c_i(\theta) = 0$, which is of no consequence for the mathematical analysis.

We denote $p_0$ the chosen reference pressure, $\bar{v}^0 $ is the vector $(g^{\prime}_1(p_0), \ldots, g^{\prime}_N(p_0))$ and $S_0$ the reference hyper-surface $ S_0 := \{\rho_i \in \mathbb{R}^N_+ \, : \, \bar{v}^0 \cdot \rho = 1 \}$.

\section{The main result}\label{mathint}

\subsection{State of the art}\label{STATEART}

While the modelling of mass, momentum and energy transport in multicomponent reactive fluids is probably as old as thermodynamics, the mathematical investigation of well posedness issues concerning these models is rather recent. In the very comprehensive book \cite{giovan}, Giovangigli was the first to develop a systematic view on modelling \emph{and} analysis. In the chapters 8 and 9, the fundamental underlying hyperbolic--parabolic structure of multicomponent flows with Maxwell-Stefan diffusion is first exhibited. To reveal this structure he resorts to equivalent so-called entropic variables, a tool which had been introduced and developed before in the context of hyperbolic conservation laws: See \cite{giovan}, Ch. 8 for details. In the case of smooth coefficients, these techniques allowed in \cite{giovan}, Th. 9.4.1 to establish global--in--time existence results for strong solutions to the Cauchy problem, if the initial conditions are close enough to a constant equilibrium state.  
These breakthrough results in principle gave a positive answer to the theoretical questions of well-posedness for multicomponent flow models, but they do not exhaust the field. Indeed, they rely on several abstract assumptions concerning the transformation to entropic variables, and also on the assumption that there is a constant equilibrium. In particular, the weak solution analysis is entirely left open. 

Concerning the theory of strong solutions, several authors looked later at different aspects of the multicomponent flow models. Maxwell-Stefan diffusion in a much more reduced setting, without mechanics and energy balance, was for instance investigated in \cite{herbergpruess}, while the incompressible Maxwell-Stefan-Navier-Stokes case with $\varrho = const$ was studied in \cite{bothepruess}. In both cases the hyperbolic component is avoided due to the assumptions of mechanical balance or incompressibility. In these papers, the PDE system is investigated in the original variables $\rho_1, \ldots,\rho_N$, and local--in--time well-posedness is proved in classes of maximal parabolic regularity. 
In the investigation \cite{bothedruet} on strong solutions, we investigate the well-posedness in classes of strong solutions ($L^p-$setting) models similar to the one of the book \cite{giovan} (for the isothermal case, but more general free energy descriptions are included). By means of a similar passage to entropic variables in order to isolate the hyperbolic component, we prove the local--in--time well--posedness for initial-boundary-value problems. We do recover also the global existence result if the initial condition is sufficiently near to a stationary \emph{non-constant} solution, but only for a finite time-horizon. Our estimates yield stability of the hyperbolic variable $\rho$ only for smooth equilibrium solution and initial data. Another recent investigation worth mentioning on strong solutions is \cite{piashiba18}.

On the weak solution side, not only the mixed parabolic--hyperbolic character of the PDE system is the source of tough problems. The question of positivity and/or species vanishing is also a major issue. In the approach of strong or classical solutions, this problem is usually handled using the short-time vicinity to the initial data, the global vicinity of the equilibrium solution (cf. \cite{giovan}, Ch. 9 and \cite{bothedruet}) or using the maximum principle for smooth solutions (cf. \cite{giovan}, 7.3.3, \cite{herbergpruess}, \cite{bothedruetMS}, section 6). Moreover, relaxing the regularity of the velocity field in the Navier-Stokes equations implies that the solution to the continuity equation is not any more bounded away from zero and infinity: complete vacuum and blow-up of the total mass density are phenomena that we must expect. Thus, the main challenges for a theory of weak solutions are 
\begin{itemize}
\item The mixed parabolic--hyperbolic character;
\item The dilute limit (vanishing constituents);
\item Extreme behaviour of the total mass density (vacuum and blow-up).
\end{itemize}
To get rid of a part of these difficulties, the incompressibility assumption was used in \cite{chenjuengel} to establish the global--in--time existence of some weak solutions. Interestingly, the passage to entropic variables was used again in this investigation, showing that this reformulation provides a natural framework both for the weak and the strong solution analysis. Very similar results are obtained in \cite{mariontemam} published the same year. 
While the assumption of incompressibility or mechanical equilibrium oblige in \cite{chenjuengel} and \cite{mariontemam} to some readjustments of the thermodynamics, the full models exposed in \cite{giovan} were investigated from the viewpoint of weak solutions by \cite{mupoza15}. In this investigation, the PDE system is handled with the help of several steps of stabilisation/regularisation: A pressure blow-up at density zero is admitted to exclude vacuum, while the hyperbolicity of the total mass density is tempered by means of the so-called Bresch-Desjardins device. A special structure of the viscous stress tensor yields some bounds on the density gradient, so that all partial mass densities, and with them the pressure, are weakly differentiable in space. 

Afterwards, coming from the different context of electrochemistry, the team of authors of \cite{dredrugagu17a}, \cite{dredrugagu17b}, \cite{dredrugagu17c} proposed a weak existence theory for equations describing the multicomponent flow of charged constituents. Accordingly, the underlying thermodynamic model relies on the structure \eqref{IDEALCHEMPOT} which is slightly more general than the models in \cite{giovan}. In this context it is more natural to admit that the pressure of the constituents is growing super linearly, so that it was possible to prove the global existence of certain weak solutions employing the Lions compactness method -- thus, with almost the same methods as for the single component fluids. In \cite{dredrugagu17a}, we also employed a change of variables similar to \cite{giovan} to isolate the hyperbolic density component. In one point however, the analysis remains unsatisfactory. We assume there Fick-Onsager diffusion fluxes of the form $J = - M(\rho) \, (\nabla \mu - b)$ -- which in general is equivalent to Maxwell-Stefan diffusion -- but have to assume \emph{moreover} that the Onsager operator is \emph{uniformly strictly positive} on the orthogonal complement of $1^N $. This provides a uniform control on the gradients of the entropic variables. A drawback of this assumption is that the Maxwell--Stefan diffusion case is not any more covered in the dilute limit (vanishing constituents), as infinite diffusion velocities must occur. 

In order to obtain a result including Maxwell--Stefan diffusion, the author relaxed in the subsequent paper \cite{dru16} the necessary assumptions for the Onsager operator. The correct dilute behaviour is recovered in the range of finite, positive mass densities, and it degenerates only on the vacuum. Unfortunately, this result is very technical, it relies on the Fick-Onsager formulation and, due to the additional features coming from the electrochemistry model, the preprint does not convey a clear and direct message.

From this brief review of the literature, we conclude that there is no published result on global weak solutions to the equations of multicomponent Maxwell-Stefan diffusion in a compressible fluid, as far as it avoids both stabilisations and/or the assumption that the equivalent Fick-Onsager form is uniformly positive on $\{1^N\}^{\perp}$. The aim of this paper is to provide such a result in the simplest possible setting where only Maxwell-Stefan diffusion and the isothermal multicomponent compressible Navier-Stokes equations are taken into account, with the ideal form \eqref{IDEALCHEMPOT} of the chemical potentials. Compared with the previous investigations, the existence of global solutions shall be enforced here by means of two types of modelling assumptions:
\begin{enumerate}[(a)]
 \item At high densities, the pressure of the single constituents is a sufficiently fast super linear function of their density;
 \item Maxwell-Stefan frictions coefficients $f_{i,k}$ are regular and strictly positive at finite pressure, but they might exhibit at low pressure a singular dependence on the state (cp.\ the formula \eqref{gbitteexplicit}).
\end{enumerate}

\subsection{The growth conditions and the main result}\label{GROWTH}

As temperature is a constant, we reinterpret the functions $g_i$ occurring in \eqref{IDEALCHEMPOT} as functions of pressure only. For $i = 1,\ldots,N$ the function $g_i: \, ]0, \, + \infty[ \rightarrow \mathbb{R}$ is subject to the following assumptions (A): 
\begin{labeling}{(A44)}
\item[(A1)] $g_i \in C^2(]0, \, +\infty[)$;
\item[(A2)] $g_i^{\prime}(p) > 0$ and $ g_i^{\prime\prime}(p) < 0$ for all $p > 0$; 
\item[(A3)] $\lim_{p \rightarrow 0} g^{\prime}_i(p) = + \infty$ and $\lim_{p \rightarrow +\infty} g^{\prime}_i(p) = 0$;
\end{labeling}
The conditions (A2) express for each constituent the requirement that: first its density $\hat{\rho}_i(p) = 1/g_i^{\prime}(p)$ is positive (first condition), and second that its volume $g_i^{\prime}(p)$ is a strictly decreasing function on pressure (second condition). The first of the conditions in (A3) means that the lower-pressure threshold corresponds to infinite volume, and the second one that infinite pressure leads to volume zero.
Up to now the assumptions on $g_i$ are obvious. For the mathematical analysis, we shall moreover require specific asymptotic growth assumptions for small and large arguments:
\begin{labeling}{(A44)}
\item[(A4)] For small and moderate arguments, we assume that $g_i$ behaves like $\bar{c}_i \, \ln p $, more precisely that there are a positive constants $c_1 < c_2$ and $M_0$ such that
\begin{align*}
0< c_1 \leq g_i^{\prime}(p) \, p \leq c_2 \text{ for all } 0 < p < M_0 \, .
\end{align*}
\item[(A5)] For large arguments, we assume that $g_i$ behaves like $\bar{c}_i \, p^{\frac{1}{\alpha_i}}$ with $\alpha_i > 1$, more precisely that there are $\alpha_1, \ldots, \alpha_N \geq \beta > 1$ and $M_1 > 0$ such that
\begin{align*}
 \alpha_i \, p \, g_i^{\prime}(p) \geq g_i(p) \geq \beta \, p \, g_i^{\prime}(p) \text{ for all } p \geq M_1 \, .
\end{align*}
\end{labeling}
Concerning the friction coefficients $f_{i,k}$ occurring in the Maxwell-Stefan equations \eqref{DIFFUSFLUX} we assume that they are given as functions of the state variables expressed by the pressure $p$ and the composition vector $x$ (number fractions). The domain of definition of composition vectors is the positive unit-sphere of the one-norm $S^1_+$,
and we denote $\overline{S}^1_+$ the closure in $\mathbb{R}^N$. For the matrix of coefficients $\{f_{i,k}\}$, we make for all $i < k$ the following natural modelling assumptions (B):
\begin{labeling}{(B44)}
\item[(B1)] $f_{i,k} \in C(]0, \, +\infty[ \times \overline{S}^1_+)$;
\item[(B2)] $f_{i,k}(p, \, x) > 0$ for all $p \in ]0, \, +\infty[$ and all $x \in \overline{S}^1_+$.
\end{labeling}
In order to formulate the specific asymptotic conditions, we distinguish between a low-pressure range $p < p_1$ and a normal pressure range $p > p_1$ for some $p_1 > 0$. We assume that
\begin{labeling}{(B44)}
\item[(B3)] There are constants $0< f_0 \leq f_1 < + \infty$ such that for the singular functions $\sigma_{i,k}$ defined just here below in \eqref{frefbitte} 
\begin{align*}
 f_0 \, \sigma_{i,k}(p, \, x)  \leq  f_{i,k}(p, \, x) \leq f_1 \, \sigma_{i,k}(p, \, x) \text{ for all } (p,x) \in ]0, \, p_1[ \times \overline{S}^1_+ \, .
 \end{align*}
\item[(B4)] There are constants $0< f_2 \leq f_3 < + \infty$ such that $ f_2 \leq  f_{i,k}(p, \, x) \leq f_3$ in $]p_1, \, +\infty[ \times \overline{S}^1_+$;
\end{labeling}
Recalling that $m_1, \ldots,m_N$ are the elementary masses of the species and $p_0$ the reference pressure, the general form of the coefficients $\sigma_{i,k}$ in (B3) is given by the formula
\begin{align}\label{frefbitte}
 \sigma_{i,k}(p, \, x) := & \sigma_0(p, \, x) \,  \frac{\exp(m_i \, (g_i(p_0)-g_i(p))) \, \exp(m_k \, (g_k(p_0)-g_k(p)))}{\hat{\chi}^{m_{i}+m_k}} \, ,\\[0.1cm]
  \sigma_0(p, \, x) := & \frac{\sum_{\ell = 1}^N m_{\ell} \, x_{\ell}}{\sum_{\ell = 1}^N g_{\ell}^{\prime}(p) \, m_{\ell} \, x_{\ell}} \, \frac{\sum_{\ell = 1}^N g^{\prime}_{\ell}(p_0) \, m_{\ell} \, x_{\ell} \,  \exp(-m_\ell \, (g_{\ell}(p_0)-g_{\ell}(p))) \, \hat{\chi}^{m_{\ell}}(p, \, x)}{\sum_{\ell = 1}^N m_{\ell} \, x_{\ell} \, \exp(m_\ell \, (g_{\ell}(p_0)-g_{\ell}(p)))  \, \hat{\chi}^{-m_\ell}} \nonumber\, ,
 \end{align}
 in which the positive factor $\hat{\chi}(p, \, x)$ is defined implicitly as the root of the equation $$\sum_{j=1}^N \hat{\chi}^{m_j} \, x_j \, \exp(-m_j \, (g_j(p_0)-g_j(p))) = 1 \, .$$

 In order to interpret \eqref{frefbitte}, we choose identical masses $m_1 = m_2 = \ldots = m_N =: m$. This allows to eliminate the inexplicit factor $\hat{\chi}$, and we obtain that
\begin{align}\label{fref2}
\sigma_{i,k}(p, \, x) = \frac{\sum_{\ell=1}^N g^{\prime}_{\ell}(p_0) \, x_{\ell} \,   e^{-m \, (g_{\ell}(p_0)-g_{\ell}(p))}}{ (\sum_{\ell = 1}^N g_{\ell}^{\prime}(p) \, x_{\ell}) \, (\sum_{\ell = 1}^N  x_{\ell} \, e^{m \, (g_{\ell}(p_0)-g_{\ell}(p))})} \, e^{m \, (g_i(p_0)-g_i(p))} \, e^{m \, (g_{k}(p_0)-g_{k}(p))} \, .  
\end{align}
Insert the special choice $g_i(p) = \bar{c}_i \, \ln(p/p_0)$ to find that the functions
\begin{align}\label{gbitteexplicit}
 \sigma_{i,k}(p, \, x) = \frac{\sum_{\ell=1}^N \bar{c}_{\ell} \, x_{\ell} \,  (\frac{p}{p_0})^{m\bar{c}_{\ell}}}{ \, (\sum_{\ell = 1}^N \bar{c}_{\ell} \, x_{\ell})\, (\sum_{\ell = 1}^N  x_{\ell} \, (\frac{p_0}{p})^{m\bar{c}_{\ell}})} \,  \left(\frac{p_0}{p}\right)^{m \, (\bar{c}_{i}+\bar{c}_k) - 1} \, .
\end{align}
define the asymptotic behaviour for $p \rightarrow 0$.

The conditions (B3), (B4) can therefore be interpreted as follows. For finite pressures, or equivalently, for finite total mass density, the friction coefficients $f_{i,k}$ (the inverse Maxwell-Stefan diffusivities) are bounded and positive. For extreme behaviour of the density (rarefied regime), the friction coefficients are singular and depend on the composition and the reference values for mass $m$, pressure $p_0$ and partial volumes $\bar{c}/p_0$ in a complex manner.
In this paper, we do not attempt to justify the condition (B3) from physical considerations. Let us merely notice that the possibilities to measure diffusivities for multicomponent systems are generally limited, not to mention approaching extreme states like vacuum. As to molecular dynamical simulations, they for the most rely on isobaric models, and are for this reason more suited to capture the compositional dependence of the $f_{i,k}$ than the dependence on parameters like pressure or total density. We refer to \cite{bothedruetMS} for details. Hence, keeping of course all cautions that are necessary in making such statements, we point at the fact that a mathematically motivated condition like (B3) might also serve as a guideline to meaningfully extrapolate known models for friction coefficients.

\subsection{Main result}\label{MAINRES}

Assume that $\Omega \subset \mathbb{R}^3$ is bounded and Lipschitz. For $T > 0$, denote $Q_T = \Omega \times ]0, \, T[$ and $S_T = \partial \Omega \times ]0,T[$. 
We shall use standard parabolic Lebesgue spaces $L^{p,q}(Q_T)$ with $1 \leq p,q \leq + \infty$. To vary a little bit, we use the original notations of the monograph \cite{ladu}, putting the space integration index first. Recall that $L^{p,p}(Q_T) = L^p(Q_T)$. We define $W^{1,0}_p(Q_T)$ the subspace of $L^1(Q_T)$ consisting of functions which possess all weak spatial derivatives in $L^p(Q_T)$. Vector valued spaces are denoted $L^{p}(Q_T; \, \mathbb{R}^k)$, $W^{1,0}_p(Q_T; \, \mathbb{R}^k)$, where $k \geq 1$ is an integer.\\[1ex]

A weak solution to the i.-b.-v. problem $(P)$ consists of
\begin{enumerate}[(i)]
\item \label{massdensities} A vector of mass densities $\rho \in L^{\gamma,\infty}(Q_T; \, \mathbb{R}^N)$ assuming nonnegative values almost everywhere in $Q_T$. Here $\gamma > 1$ is some constant fixed by the data;
\item \label{velocities} A velocity field $v \in W^{1,0}_2(Q_T;\,  \mathbb{R}^3)$ such that with $\varrho := \sum_{i=1}^N \rho_i$, the field $\sqrt{\varrho} \, v$ belongs to $L^{2,\infty}(Q_T; \, \mathbb{R}^3)$;
\item \label{flux} A diffusion flux matrix $J \in L^{\frac{2\gamma}{1+\gamma},2}( Q_T;\, \mathbb{R}^{N\times 3})$ assuming values in $\{1^N\}^{\perp} \times \mathbb{R}^3$.
\end{enumerate}
For all test-elements $\psi \in C^1_c([0, \, T[ \times \overline{\Omega}; \, \mathbb{R}^N)$ and $\eta \in C^1_c([0, \, T[ \times \Omega; \, \mathbb{R}^3)$, these fields are subject to the following integral relations
\begin{align}
& \label{INTRELmass} -\int_{Q_T} \rho \cdot (\partial_t \psi + (v \cdot \nabla) \psi) \, dxd\tau-\int_{Q_T} J \, : \, \nabla \psi \, dxd\tau=  \int_{\Omega} \rho^0(x) \cdot \psi(x, \, 0) \, dx \, ,\\
& \label{INTRELmoment} - \int_{Q_T} \varrho \, v \cdot (\partial_t \eta + (v \cdot \nabla) \eta) \, dxd\tau + \int_{Q_T} \mathbb{S}(\nabla v) \, : \, \nabla \eta \, dxd\tau =\nonumber\\
& \qquad \int_{\Omega} v^0(x) \cdot \eta(x, \, 0) \, dx + \int_{Q_T} \hat{p}(\rho) \, \divv \eta \, dxd\tau + \int_{Q_T} \rho\,  b \cdot \eta \, dxd\tau  \, .
\end{align}
In order to solve the Maxwell-Stefan equations \eqref{DIFFUSFLUX}, \eqref{maxsteftensor}, we must make sense of the generalised driving forces. In particular, we must explain by possibly vanishing mass densities why we are allowed to build the spatial gradient of $\mu := \hat{\mu}(\hat{p}(\rho), \, \hat{x}(\rho))$. To this aim we introduce for large parameters $k \in \mathbb{N}$ and vectors $\xi \in \mathbb{R}^N$ a cut-off operator $[\xi]^k := ([\xi_1]^k, \ldots, [\xi_N]^k)$ and for $s \in \mathbb{R}$, we define $[s]^k := \text{sign}(s) \, \min\{|s|, \, k\}$. For a vector $\rho$ subject to \eqref{massdensities}, we then require in addition to \eqref{massdensities}, \eqref{velocities}, \eqref{flux} the following additional technical condition:
\begin{enumerate}[(i)]
\addtocounter{enumi}{3}
 \item \label{Condimu1} The projection $\mathcal{ P}_{\{1^N\}^{\perp}} \hat{\mu}(\hat{p}(\rho), \, \hat{x}(\rho))$ satisfies $$[\mathcal{ P}_{\{1^N\}^{\perp}} \hat{\mu}(\hat{p}(\rho), \, \hat{x}(\rho))]^k \in W^{1,0}_2(Q_T; \, \mathbb{R}^N) \text{ for all } k \in \mathbb{N} \, . $$
\end{enumerate}
Obviously, the vector field $\hat{\mu}(\hat{p}(\rho), \, \hat{x}(\rho))$ is finite almost everywhere in the set 
\begin{align}\label{A+}
A^+ = \bigcap_{i = 1}^N A_i^+ \text{ with  } A_i^+ := \{(x, \,t) \in Q_T \, : \, \rho_i(x, \, t) > 0 \} \, .
\end{align}
Due to the condition \eqref{Condimu1}, the matrix $\nabla \mathcal{ P}_{\{1^N\}^{\perp}} \hat{\mu}(\hat{p}(\rho), \, \hat{x}(\rho)) $ is therefore identical with an element of $L^2(Q_T; \, \mathbb{R}^{N\times 3})$ at almost every point of $A^+$.

With $B(\rho)$ defined via \eqref{matrixB}, with $R := \text{diag}(\rho_1, \ldots, \rho_N)$ and $P = I - 1^N \otimes \hat{y}(\rho)$, solving the Maxwell-Stefan equations in the weak sense shall consist of two pointwise identities:
\begin{align}\label{maxstefweak1}
B(\rho(x, \, t)) \, J(x, \, t) = & - R \, P(x, \, t) \, (\nabla \mu(x, \,t) - b(x, \,t))\, \, \text{ for almost all } (x,t) \in A^+ \, ,\\[0.4cm]
\label{maxstefweak2} J^i(x, \, t) =&  0  \, \, \text{ for almost all } (x, \, t) \in Q_T \setminus A_i^+ \text{ such that } \hat{p}(\rho(x, \, t)) > 0 \, .
\end{align}
\begin{remark}
For a weak solution satisfying \eqref{maxstefweak1} and \eqref{maxstefweak2}, the flux $J$ remains non identified only on the pathological vacuum set. For positive total mass density -- or equivalently positive pressures -- $J$ satisfies the Maxwell-Stefan equations (cf. \eqref{maxstefweak1}) and exhibits the expected behaviour in the dilute limit (cf. \eqref{maxstefweak2}).
\end{remark}
\begin{theorem}\label{MAIN}
We assume that $\Omega \subset \mathbb{R}^3$ is bounded and of Lipschitz class. Let $T > 0$. For the data, we assume that
\begin{enumerate}[(a)]
\item The functions $g_i$ are subject to conditions {\rm (A)}, and the numbers $\alpha_1,\ldots,\alpha_N$ occurring in {\rm (A5)} satisfy $\max_{i= 1,\ldots,N} \alpha_i < \frac{3}{2}$;
\item The functions $\{f_{i,k}\}_{i<k}$ are subject to the conditions {\rm (B)};
\item $\rho^0 \in L^{\infty}(\Omega; \, \mathbb{R}^N_+)$ is such that $\underset{x \in \Omega}{\text{essinf}}\, \rho^0_i(x) > 0$ for $i=1,\ldots,N$,  and $v^0 \in L^2(\Omega; \, \mathbb{R}^3)$;
\item The external forcing $b$ belongs to $L^{\infty}(Q_T; \, \mathbb{R}^{N\times 3})$.
\end{enumerate}
Then, the problem $(P)$ possesses a global weak solution $(\rho, \, v, \, J)$. In particular, it is subject to \eqref{massdensities} with $\gamma = \max_{i= 1,\ldots,N} \alpha_i/(\max_{i= 1,\ldots,N} \alpha_i-1) > 3$ and satisfies \eqref{velocities}, \eqref{flux}, \eqref{Condimu1}. The equations \eqref{mass}, \eqref{momentum} are satisfied in the sense of the integral identities \eqref{INTRELmass}, \eqref{INTRELmoment}, and the Maxwell-Stefan equations \eqref{maxsteftensor} in the sense of the pointwise identities \eqref{maxstefweak1}, \eqref{maxstefweak2}.
\end{theorem}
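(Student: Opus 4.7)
The plan is to construct approximate solutions via a multi-level regularization scheme in the Lions--Feireisl tradition adapted to the Maxwell--Stefan setting. At the innermost level I would regularize the species continuity equations with artificial diffusion $-\varepsilon \Delta \rho_i$, augment the pressure by an artificial term $\delta \varrho^{\beta}$ for large $\beta$ to prevent vacuum and keep the total density bounded, discretize the momentum equation in a Galerkin basis, and invert the Maxwell--Stefan system \eqref{maxsteftensor} on $\{1^N\}^\perp$ using that $B(\rho)$ restricted there is symmetrizable and uniformly positive for strictly positive $\rho$ by (B2). The corresponding finite-dimensional, regularized problem is a coupled nonlinear parabolic/ODE system that can be solved locally in time by a Banach fixed-point argument, and then extended globally via uniform (in the Galerkin parameter) energy bounds.

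The passage $\varepsilon \to 0$, $\delta \to 0$ and the limit in the Galerkin approximation all rely on two a priori estimates. The first is the standard energy balance obtained by testing \eqref{momentum} with $v$ and using the Gibbs--Duhem relation \eqref{GIBBSDUHEMEULER}: the functional $\int_\Omega(\varrho|v|^2/2 + h(\rho))\,dx$ decreases up to viscous dissipation, body forces and the Maxwell--Stefan dissipation. Combined with (A5), this yields $\rho \in L^\infty(0,T;L^{\alpha_i}(\Omega))$ and $v \in L^2(0,T;H^1_0)$; a Bogovskii-type argument with test function $\mathcal{B}(\varrho^\theta - \langle\varrho^\theta\rangle)$ in the momentum equation lifts the pressure integrability to $L^{\gamma+\theta}$, giving the exponent $\gamma > 3$ from hypothesis (a). The second is the entropy/dissipation estimate coming from the Maxwell--Stefan structure: $B(\rho)$ on $\{1^N\}^\perp$ controls $\sqrt{R}\,P\,\nabla[\mathcal{P}_{\{1^N\}^\perp}\hat\mu]^k$ in $L^2(Q_T)$ at each truncation level. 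The precise role of the asymptotic condition (B3) via \eqref{frefbitte} is to ensure that the exponential smallness of $\rho_i\rho_k$ in the dilute regime is exactly compensated by the singularity of $f_{i,k}$, so that the bound on the dissipation persists in the limit and yields both \eqref{flux} and the truncated regularity \eqref{Condimu1}.

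The central obstacle, as always in compressible flow problems of this type, is the strong compactness of $\rho$ needed to pass to the limit in the nonlinear pressure $\hat{p}(\rho)$ and in the friction coefficients $f_{i,k}(p, \, x)$. I would handle this by proving an effective viscous flux identity: if $\hat{p}(\rho^n)$ converges weakly to some $\overline{\hat{p}(\rho)}$ and $\rho^n \rightharpoonup \rho$, then the quantity $\hat{p}(\rho^n) - (2\mu_S+\lambda_S)\,\diver v^n$ commutes with weak limits when multiplied by a suitable nonlinear function of $\varrho^n$. The multicomponent twist is that $\hat{p}$ depends implicitly on the entire vector $\rho$ through \eqref{PRESSURESTATE}; however, the Maxwell--Stefan dissipation supplies parabolic compactness of the composition vector $y$ on $\{1^N\}^\perp$, so one is reduced to the scalar problem for $\varrho$ with a pressure $\hat{p}(\varrho,y)$ monotone in $\varrho$ at fixed $y$ (a consequence of (A2)). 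Coupled with a renormalized continuity equation argument (admissible thanks to $\gamma>3$ and thus $v$ locally in $L^2_tW^{1,p}_x$ for $p$ above any threshold one needs), this gives strong $L^1$ convergence of $\varrho^n$ and of each $\rho_i^n$.

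Once the density is strongly compact, the identification of the Maxwell--Stefan flux in the limit follows by weak--strong convergence: on the positivity set $A^+$ the matrix $B(\rho)$ is non-degenerate and one reads off \eqref{maxstefweak1} from the approximate equation, while on the vacuum set $\{\varrho = 0\}$ all $J^i$ vanish because the $L^2$ dissipation bound, combined with the singular growth of $f_{i,k}$ prescribed by (B3), forces the fluxes to zero wherever $\rho_i \to 0$ at positive pressure, yielding \eqref{maxstefweak2}. The initial conditions and boundary conditions are preserved through the usual trace and weak-continuity arguments for the continuity and momentum equations. The step I expect to be by far the hardest is the effective viscous flux identity in the multicomponent implicit-pressure setting, because the combinatorics of passing to the limit simultaneously in $\hat{p}(\rho)$, in $\nabla \hat\mu$, and in the matrix $B(\rho)$ — while respecting the asymptotic profile (B3) near vacuum — is much more delicate than in the scalar Navier--Stokes case.
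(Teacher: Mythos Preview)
Your overall architecture (regularize, energy/dissipation bounds, effective viscous flux, renormalization) is reasonable, but there is a genuine gap at the heart of the compactness step: you assert that ``the Maxwell--Stefan dissipation supplies parabolic compactness of the composition vector $y$ on $\{1^N\}^\perp$'' and then reduce to a scalar pressure $\hat p(\varrho,y)$ monotone in $\varrho$. This is precisely the step that does \emph{not} go through. The dissipation, even with the asymptotic hypothesis (B3), is at best $\zeta \gtrsim P^{\sf T}(y)\,R\,P(y)\,\nabla\mu\cdot\nabla\mu$ in the normal-pressure regime, i.e.\ a bound weighted by $R=\mathrm{diag}(\rho)$; the factor $\varrho$ degenerates on rarefied states, so you do \emph{not} obtain $\nabla y\in L^2(Q_T)$ uniformly. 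Worse, the mass fractions $y_i=\rho_i/\varrho$ are not purely ``parabolic'': they depend on the hyperbolic variable (total density/pressure) as well as on composition, so $\nabla y$ inevitably contains a piece proportional to $\nabla p$, which is only in $L^{2\gamma/(2+\gamma),2}$ and not in $L^2$. Without a uniform $W^{1,0}_2$ bound on the ``frozen'' variable, the div--curl/effective-viscous-flux argument you sketch for $\hat p(\varrho,y)$ cannot be closed.

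The paper resolves this by introducing a genuinely new parabolic coordinate. One re-parametrizes $\rho=X(s,w)$ along the characteristics of the vector field $u(\rho)=(D^2h(\rho))^{-1}1^N$, with $s=\hat p(\rho)$ and $w\in S_0=\{\bar v^0\cdot\rho=1\}$ a normalization to reference pressure. The point of condition (B3) is \emph{not} merely that ``$\rho_i\rho_k$ smallness is compensated'': the specific singular profile $\sigma_{i,k}$ is chosen so that the inverse Maxwell--Stefan matrix satisfies $B^D(\rho)R\ge c\,P^{\sf T}(y)\,W\,P(y)$ with $W=\mathrm{diag}(w)$, and since $w$ lives on the bounded surface $S_0$ this yields the \emph{uniform} robust bound $\zeta\ge c\,|\nabla w|^2$, independent of vacuum or blow-up of $\varrho$. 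One then writes $p=\mathscr P(\varrho,w)$ with $\mathscr P$ strictly increasing in $\varrho$ and with $\diff_w\mathscr P$ bounded on compact $\varrho$-sets; this is what makes the monotonicity/div--curl argument for \eqref{cacs} work. Your proposal is missing this change of variables entirely, and without it neither the $L^2$ control on the compositional gradient nor the Lions compactness for $\varrho$ can be obtained in the Maxwell--Stefan (as opposed to uniformly-elliptic Fick--Onsager) setting.

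A secondary remark: the paper's approximation is also different from yours. Rather than $\varepsilon\Delta\rho_i$ plus $\delta\varrho^\beta$, it works in entropic variables $\mu$ with the stabilized Onsager matrix $M^\sigma=B^D R+\sigma I$, which makes \eqref{masssigma} genuinely parabolic in $\mu$ and delivers $\nabla\mu^\sigma\in L^2$ directly; this is what allows the passage to the limit in the Maxwell--Stefan relation \eqref{NULL} via the representation \eqref{NULL2}--\eqref{NULL3} in terms of $\nabla w^\sigma$. Your $\varepsilon\Delta\rho_i$ scheme could perhaps be made to work, but it does not by itself produce the $W^{1,0}_2$ regularity of the entropic variables that the limit identification of $J$ on $A^+$ requires.
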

\begin{remark}
In analogy to the results known for single components Navier-Stokes equations (\cite{lionsfils}, \cite{feinovpet}, a. o.) and for multicomponent systems (\cite{dredrugagu17a}), we expect that a similar result holds for $\max_{i= 1,\ldots,N} \alpha_i < 3$. To verify this claim would however considerably increase the technicality.
\end{remark}
{\bf Structure of the paper.} Due to several preliminary investigations, the steps of regularising the PDE system and constructing sufficiently regular approximate solution are in principle clear enough. We focus in this paper on the new aspects in the estimates and the convergence proof. In the next section, the general strategy is first exposed: Change of variables, new estimates, compactness argument. The remainder of the paper is the technical part: We present separately in two separate sections the detailed proofs concerning the change of variables and the resulting new robust estimates. The final section of the paper discusses the convergence of a typical approximation scheme.

\section{Main ideas}\label{MAINIDEAS}

As explained in the previous section, the first main question is how to make sense of the Maxwell--Stefan equations \eqref{DIFFUSFLUX} even for extreme states of the system, in particular dilute or rarefied ones. Both phenomena cannot be avoided for weak solutions.
From the technical viewpoint of analysis, we introduce a new parametrisation of the variable $\rho$ in the state space $\mathbb{R}^N_+$, in such a way as to obtain weak parabolic estimates for auxiliary variables $w_1, \ldots, w_{N}$ independently of vanishing or blow--up of the total mass density. We show that a refined analysis of the dissipation due to diffusion, when combined with the growth assumptions (B3) for the friction coefficients in the low-pressure regime, shall provide the robust parabolic estimates we need. The variables $w_1,\ldots,w_N$ shall have the meaning of some normalisation of the mass densities to reference pressure.

Thanks to these estimates, we can rephrase the pressure as a function of the total mass density and the parabolic variable $w$ via $p = \hat{p}(\rho) = \mathscr{P}(\varrho, \, w)$. The function $\mathscr{P}$ is increasing in the first component, and bounded differentiable in the remaining parabolic components. This structure allows to prove that the total mass density associated with a weak solution is in a compact set of $L^1(Q_T)$. We now proceed to explaining these main ideas in more details.

\subsection{A fundamental direction vector}\label{VECTORU}

Consider a solution $\rho, \, v$ to the problem $(P)$. In the absence of sources, external fluxes and forces, the energy (in)equality for the system reads as follows: For all $0 < t \leq T$
\begin{align}\label{EE}
&  \int_{\Omega} \{h(\rho(x, \, t)) + \frac{\varrho(x, \, t)}{2} \, |v(t, \, x)|^2\} \, dx + \int_{Q_t} \{\mathbb{S} \, : \,  \nabla v - J \, : \,  \nabla \mu\} \, dxd\tau\nonumber\\
& \qquad  \leq  \int_{\Omega} \{h(\rho^0(x)) + \frac{\varrho^0(x)}{2} \, |v^0(x)|^2\} \, dx \, .
\end{align}
As is well known -- this shall be recalled below in more details -- inversion of the Maxwell-Stefan equations \eqref{DIFFUSFLUX} provides a relationship $J^i = - \sum_{j=1}^N M_{i,j}(\rho) \, \nabla \mu_j$ with $M(\rho(x,t)) \in \mathbb{R}^{N\times N}$ symmetric, positive semi-definite, satisfying $\text{rk}(M(\rho(x,t))) = N-1$ and $\text{N}(M(\rho)) = \{1^N\}$. Hence \eqref{EE} provides, among others, a control on the quantity $\zeta := \sum_{i,j=1}^N M_{i,j}(\rho) \, \nabla \mu_i \cdot \nabla \mu_j$ in $L^1(Q_T)$, which represents the dissipation (or entropy production) of diffusion. 

To further characterise this $\zeta$, we use \eqref{CHEMPOT} and the chain rule compute the spatial gradient $\nabla \mu = D^2h(\rho) \, \nabla \rho$ with the Hessian $D^2h$ of the free energy. We obtain that
\begin{align*}
 \sum_{i,j=1}^N M_{i,j}(\rho) \, \nabla \mu_i \cdot \nabla \mu_j = \sum_{k,\ell=1}^N (D^2h(\rho) \, M(\rho) \, D^2h(\rho))_{k,\ell} \, \nabla \rho_k \cdot \nabla \rho_{\ell} \, .
\end{align*}
Due to the properties of $M(\rho)$, the matrix $D^2h(\rho) \, M(\rho) \, D^2h(\rho)$ is symmetric, positive semi--definite, and possesses rank $N-1$. Its kernel is the vector $u = u(\rho)$ solution to $D^2h(\rho) \, u = 1^N$. We next want to determine $u$. In order to compute the Hessian, we exploit the particular representation \eqref{FE}, or directly the formula \eqref{IDEALCHEMPOT}. If all $g_i(p)$ are strictly concave functions of class $C^2(]0, \, +\infty[)$ -- as required in assumptions (A) -- the implicit equation \eqref{PRESSURESTATE} for $\hat{p}$ can be used to compute that 
\begin{align}\label{pressderiv}
\partial_{\rho_j}\hat{p}(\rho) = -\frac{g_j^{\prime}(\hat{p}(\rho))}{\sum_{k=1}^N g_k^{\prime\prime}(\hat{p}(\rho)) \, \rho_k} \, .
\end{align}
Thus, setting $k_{B}\theta = 1$ for the sake of easier notation, we find for the Hessian of the free energy the expression
\begin{align}\label{HESS}
 D^2_{\rho_i,\rho_j}h(\rho) = \frac{1}{m_i\, m_j \, \hat{n}(\rho)} \, \left(\frac{\delta_{i,j}}{\hat{x}_i(\rho)} - 1\right) - \frac{1}{\sum_{k=1}^N g_k^{\prime\prime}(\hat{p}(\rho)) \, \rho_k}   \,  g_{i}^{\prime}(\hat{p}(\rho))   \, g_{j}^{\prime}(\hat{p}(\rho))  \, .
\end{align}
Recall in this point that $\hat{n}(\rho) = \sum_{j} (\rho_j/m_j)$ is the total number density. As will be shown below, this matrix is explicitly invertible (see the inversion Lemma \ref{inversehess}), so that we can now compute directly that the solution to $D^2h(\rho) \, u = 1^N$ is
\begin{align}\label{darstellunguprime}
 u_i(\rho) = & \rho_i \, \Big(m_i \, (1- \varrho \, g_i^{\prime}(\hat{p}(\rho))) + a(\rho)\Big) \, , \nonumber\\
 a(\rho) := & \sum_{j=1}^N g_j^{\prime}(\hat{p}(\rho)) \, (\varrho \,  g_j^{\prime}(\hat{p}(\rho))- 1) \, \rho_j\, m_j - \varrho \, \sum_{j=1}^N g_j^{\prime\prime}(\hat{p}(\rho)) \, \rho_j \, \, ,
\end{align}
in which $\varrho = \sum_j \rho_j$.

\subsection{Re-parametrisation of the state space}\label{PARAM}

We next introduce an parametrisation of the state space $\mathbb{R}^N_+$ using alternative variables. 
The aim is a description $\rho = X(s, \, w_1, \ldots, w_N)$, where $w$ corresponds to mass densities normalised at some fixed reference pressure $p_0$, and the parameter $s$ stands for $\hat{p}(\rho)$ or the pressure. For fixed $w$, the curve $s \mapsto X(s)$ shall be the characteristics following the vector $u$ of \eqref{darstellunguprime}.

To describe this procedure, we let $S_0 := \{\rho \in \mathbb{R}^N_+ \, : \, \sum_{i=1}^N g^{\prime}_i(p_0) \, \rho_i = 1\}$. We shall also abbreviate $\bar{v}^0_i := g^{\prime}_i(p_0)$, which is a fixed vector in $\mathbb{R}^N_+$. Due to the definition of the pressure state equation \eqref{PRESSURESTATE}, the identity $S_0 = \{ \rho \in \mathbb{R}^N_+ \, : \, \hat{p}(\rho) = p_0\}$ is valid.

For $w \in S_0$, consider on an interval $I = ]p_0 - \delta, \, p_0+ \delta[$ the system of ordinary differential equations
\begin{align}\label{ODE}
\dot{X}_i(s, \, w) = \frac{1}{X(s, \, w) \cdot 1^N} \, u_i(X(s; \, w)) \, \text{ for } s \in I , \quad X_i(p_0, \, w) = w_i \, . 
\end{align}
Here the vector field $u$ is given by \eqref{darstellunguprime}, and $\dot{X} := \frac{d}{ds}X$. Assume that $X$ is a solution to \eqref{ODE} such that $X$ assumes values in $\mathbb{R}^N_+$. Then we easily compute that
\begin{align}\label{identifypress}
 \frac{d}{ds} \hat{p}(X(s)) = & \nabla_{\rho}\hat{p}(X(s)) \cdot \dot{X}(s) = \frac{1}{X(s; \, w) \cdot 1^N} \, \nabla_{\rho}\hat{p}(X(s)) \cdot u(X(s; \, w))\nonumber\\
 = & \frac{1}{X(s; \, w) \cdot 1^N} \, D^2h(X(s))X(s)  \cdot u(X(s; \, w)) = 1 \, .
\end{align}
In order to justify the two latter equations, recall that \eqref{GIBBSDUHEMEULER} guaranties that $\nabla_{\rho}\hat{p}(\rho) = D^2h(\rho)\rho$, while the definition of $u$ implies that $D^2h(\rho) u(\rho) = 1^N$. Thus, \eqref{identifypress} implies that $\hat{p}(X(s)) = s + c$. For $s = p_0$, we use that $X(p_0, \, w) = w$. Since $w \in S_0$, we have $\hat{p}(X(s, \, w)) = \hat{p}(w) = p_0$ showing that $c = 0$ and that $\hat{p}(X(s, \, w)) = s$ for all $s > 0$ and all $w \in S_0$. Thus, the curves $s \mapsto X(s, \, w)$, $s \in I$ described by \eqref{ODE} are parametrised by the pressure. 

An important further property of the solution to \eqref{ODE} is the following. Consider any fixed vector $\eta \in \mathbb{R}^N$ such that $\eta \cdot 1^N = 0$ (short: $\eta \in \{1^N\}^{\perp}$). Using that $\dot{X}(s)$ is parallel to $u(X(s))$, that $D^2h(X(s)) u(X(s)) = 1^N$, we compute that $\frac{d}{ds} \eta \cdot \nabla_{\rho} h(X(s)) = D^2h(X(s))  \dot{X}(s) \cdot \eta = 0$. This means that 
\begin{align}\label{Proper}
\eta \cdot \nabla_{\rho} h(X(s, \, w)) = \eta \cdot \nabla_{\rho} h(X(p_0, \, w)) = \eta\cdot  \nabla_{\rho} h(w) \text{ for all } s\in I, \, w \in S_0\, , \eta \in \{1^N\}^{\perp} \, .
\end{align}
Anticipating on our results here below, we will show that the solution to \eqref{ODE} is global on the interval $I = ]0, \, +\infty[$ for all $w \in S_0$. Moreover, the map $X$ is bijective from $I \times S_0$ into $\mathbb{R}^N_+$.

Now, suppose that we apply to a solution vector $\rho(x, \, t)$ of the PDE system \eqref{mass} this change of variable, re-parametrising $\rho_i(x, \, t) = X_i(s(x, \,t), \, w(x, \, t))$ with $s(x, \, t) = \hat{p}(\rho(x, \, t))$ and a certain vector field $w: \, Q \rightarrow S_0$. Then for every $\eta \in \{1^N\}^{\perp}$ it follows by means of \eqref{Proper} that
\begin{align}\label{defmuw}
\eta \cdot \mu(x, \, t) = & \eta \cdot \nabla_{\rho} h(\rho(x, \,t)) = \eta \cdot \nabla_{\rho} h(X(s(x, \,t), \, w(x, \, t))) \nonumber\\
= & \eta \cdot \nabla_{\rho} h(w(x, \, t)) = \eta \cdot \hat{\mu}(p_0, \, \hat{x}(w(x, \, t))) =:  \eta \cdot \mu^{w}(x, \, t)    \, .
\end{align}
Recall at this point $\hat{\mu}$ is a map defined on $]0, \, + \infty[ \times S^1_+$ via $\hat{\mu}(p, \, x) =g(p) + \frac{1}{m} \, \ln x$ (cf. \eqref{IDEALCHEMPOT}).
Due to \eqref{defmuw}, the dissipation $\zeta := \sum_{i,j=1}^N M_{i,j}(\rho) \, \nabla \mu_i \cdot \nabla \mu_j$ due to diffusion possesses the equivalent representation $\zeta = \sum_{i,j=1}^N M_{i,j}(\rho) \,\nabla \mu^w_i \cdot \nabla \mu^w_j$.

\subsection{A robust estimate}\label{ROBUSTSEC}

For $(x, \, t) \in Q_T$, consider $\rho = \rho(x, \, t) \in \mathbb{R}^N_+$. Then $\rho$ is represented $\rho = X(s, \, w)$ with $s = s(x, \, t) = \hat{p}(\rho(x,\, t))$ and a unique normalisation $w = w(x, \, t) \in S_0$. Our idea -- to be proved in details in the subsequent sections -- is that the growth conditions (B3) for the friction coefficients of the Maxwell-Stefan system allow to prove for the matrix $M(\rho)$ an inequality from below:
\begin{align}\label{FUNDAM2}
 M(\rho) \geq & d_0\, P^T(\hat{y}(\rho)) \, W \, P(\hat{y}(\rho))\, ,\quad  W := \text{diag}(w_1,\ldots, w_N) \, , \, P(y) := I - 1^N \otimes y \, ,
\end{align}
where $d_0 > 0$ is a constant and $y = \hat{y}(\rho) = \rho/(\sum_{i} \rho_i)$ are the mass fractions.

This means that the control for of the integral of $ \zeta = M(\rho) \, \nabla \mu^w \cdot \nabla \mu^w$ next implies an estimate in in $L^1(Q_T)$ for
\begin{align}\label{encorezeta}
\zeta =  M(\rho) \, \nabla \mu^w \cdot \nabla \mu^w \geq & d_0 \, \sum_{i=1}^N w_i \, |\nabla \mu^w_i - \hat{y}(\rho) \, \nabla \mu^w|^2\nonumber\\
= & d_0 \, w\cdot 1^N \, \sum_{i=1}^N \hat{y}_i(w) \, |\nabla \mu^w_i - \hat{y}(\rho) \, \nabla \mu^w|^2 \, 
\end{align}
with the mass fractions $\hat{y}_i(w) := w_i/ \sum_j w_j$ associated with the normalised state $w_1,\ldots,w_N$. 
From the identity \eqref{encorezeta}, we now proceed in two steps. Using first the convexity of $|\cdot|^2$, and the fact that $\hat{y}(w)$ is a vector of fractions, we obtain that
\begin{align}\label{stepone}
 \zeta \geq d_0 \, w\cdot 1^N \, |\sum_{i=1}^N \hat{y}_i(w) \, \nabla \mu^w_i - \hat{y}(\rho) \, \nabla \mu^w|^2 \, .
\end{align}
Next, we expand in \eqref{encorezeta} via
\begin{align*}
\sum_{i=1}^N \hat{y}_i(w) & \, |\nabla \mu^w_i - \hat{y}(\rho) \, \nabla \mu^w|^2 =\sum_{i=1}^N \hat{y}_i(w) \, |\nabla \mu^w_i - \hat{y}(w) \, \nabla \mu^w + \hat{y}(w) \, \nabla \mu^w - \hat{y}(\rho) \, \nabla \mu^w|^2\\
\geq & \frac{1}{2} \,  \sum_{i=1}^N \hat{y}_i(w) \, |\nabla \mu^w_i - \hat{y}(w) \, \nabla \mu^w|^2 - \sum_{i=1}^N \hat{y}_i(w) \, |\hat{y}(w) \, \nabla \mu^w - \hat{y}(\rho) \, \nabla \mu^w|^2 \\
\geq & \frac{1}{2} \,  \sum_{i=1}^N \hat{y}_i(w) \, |\nabla \mu^w_i - \hat{y}(w) \, \nabla \mu^w|^2 - \frac{\zeta}{d_0 \, w\cdot 1^N} \, .
\end{align*}
Thus, combining with \eqref{encorezeta}, we get
\begin{align}\label{encorezeta2}
 \sum_{i=1}^N \hat{y}_i(w) \, |\nabla \mu^w_i - \hat{y}(w) \, \nabla \mu^w|^2 \leq \frac{4}{d_0 \, w\cdot 1^N} \,  \zeta \, .
\end{align}
Now, a brief calculation using the definitions of the mass fractions $\hat{y}(w)$ and the number fractions $\hat{x}(w)$ shows that $\hat{y}_i(w) = m_i \, \hat{x}_i(w) / \sum_j m_j \,  \hat{x}_j(w)$. Therefore, due to the fact that $\hat{x}$ attains values in $S^1_+$, we show that
\begin{align*}
\hat{y}(w) \,\nabla \mu^w = & \sum_{i=1}^N \hat{y}_i(w) \, \frac{1}{m_i} \, \nabla \ln \hat{x}_i(w) = \frac{1}{m \cdot \hat{x}(w)}  \, \sum_{i=1}^N \hat{x}_i(w)  \, \nabla \ln \hat{x}_i(w)  = 0 \, .
\end{align*}
Thus, the inequality \eqref{encorezeta2} yields
\begin{align}\label{ROBUST}
\zeta \geq & \frac{d_0}{4} \, w\cdot 1^N \, \sum_{i=1}^N \hat{y}_i(w) \, |\nabla \mu^w_i|^2 = \frac{d_0 \, w\cdot 1^N}{m \cdot \hat{x}(w)} \, \sum_{i=1}^N \frac{1}{m_i} \, |\nabla \sqrt{\hat{x}_i(w)}|^2 \, .
\end{align}
This also implies a bound for the gradient of $w$. Recall indeed that $w \cdot \bar{v}^0 = 1$, due to the fact that $w \in S_0$. This allows to show the identity $w_i = \hat{x}_i(w)/(m_i \, \sum_{j} \hat{x}_j(w) \, (\bar{v}^0_j/m_j))$. For $a_i := \sqrt{\hat{x}_i(w)/m_i}$, we thus get $w_i = a_i^2/(a^2 \cdot \bar{v}^0)$. We clearly obtain that $|\nabla w|^2 \leq c_1 \, |\nabla a|^2$, where $c_1$ depends on the numbers $\min m,\, \max m, \, \min \bar{v}^0, \, \max \bar{v}^0$. Since also $w\cdot 1^N \geq \min \bar{v}^0$ we finally obtain that
\begin{align}\label{ROBUST2}
\zeta \geq  d_0 \, c_2 \, |\nabla w|^2 \, .
\end{align}

\subsection{Proving compactness}\label{PQ}

Imagine that we have approximated the problem $(P)$ with a sequence of problems $(P_{\sigma})$ easier to solve. For example, we discuss below the case of replacing the Onsager operator with a stabilisation $M^{\sigma}(\rho) := M(\rho) + \sigma \, {\rm I}$ of full rank in order to obtain a parabolic problem. The question is now whether the vector of mass densities $\rho^{\sigma}$ solution to $(P_{\sigma})$ converges, at least for an appropriate subsequence, to a weak solution for $(P)$. In view of the nonlinearity of the system, it is clear that obtaining strong convergence is a necessary step to prove existence for the limit problem.

In a series of studies (in particular in \cite{dredrugagu17c}, \cite{druetmixtureincompweak}) devoted to this type of problems, we have shown that the compactness problem is reduced to obtaining the strong convergence of the hyperbolic components, that is, the sequence of total mass densities $\varrho^{\sigma}$. Indeed, once the latter property is known to hold, the weak parabolic estimates \eqref{ROBUST2} allow to show the compactness of the entire vector $\rho_1^{\sigma}, \ldots,\rho^{\sigma}_N$ by means of adapted Aubin-Lions (J.-L.) techniques.

Now, the compactness of the total mass densities is essentially obtained by adapting the method of Lions (P.-L.) for single component Navier-Stokes to our case. We show here in essence how to derive this adaptation from the estimate \eqref{ROBUST2}.

Assume first that the growth of the pressure $\rho \mapsto \hat{p}(\rho)$ is sufficiently strong. This means that there are positive $c_0, \, c_1$ such that $\hat{p}(\rho) \geq c_0 \, |\rho|^{\gamma} - c_1$ with $\gamma > 3/2$. To simplify the technical discussion, we will in fact assume for the proof of our main result that $\gamma > 3$, which corresponds to choosing $\max_{i = 1,\ldots,N} \alpha_i < 3/2$ in (A5). In this case we elementarily obtain from the mathematical theory of Navier-Stokes equations an \emph{a priori} estimate in $L^{1+1/\gamma}(Q_T)$ for the pressure, and in $L^{1+\gamma}(Q_T)$ for the density, so that the product $p \, \varrho$ is at least in $L^1(Q_T)$.

Then, due to many investigations (work of Lions, Feireisl and, for the multicomponent case, also our study \cite{dredrugagu17c} a.o.), the validity of the Lions argument for the compactness of the total mass density is reduced to showing
\begin{align}\label{cacs}
\lim_{\sigma \rightarrow 0} \int_{Q_t} p^{\sigma} \, \varrho^{\sigma} \, dxd\tau \geq \int_{Q_t} p \, \varrho \, dxd\tau \, ,
\end{align}
where $p$ is the weak limit of $p^{\sigma}$ in $L^{1+\frac{1}{\gamma}}(Q_T)$ and $\varrho$ the weak limit of $\varrho^{\sigma}$ in $L^{1+\gamma}(Q_T)$. To prove the property \eqref{cacs}, we essentially employ the monotonicity of the curves $s \mapsto X(s, \, w)$ of \eqref{ODE}. The strict monotonicity allows to introduce for $r >0$ the coordinate transformation $(s, \, w) \mapsto (r, \, w)$ via $r = \sum_{j=1}^N X_j(s, \, w)$, which defines an implicit function $s = \mathscr{P}(r, \, w)$. 
In particular, the inverse map $r \mapsto \mathscr{P}(r, \, w)$ is increasing on $]0, \, + \infty[$.

Thus, for some sequence $\{\sigma_n\}_{n \in \mathbb{N}}$, let us change variables and represent $p^{\sigma_n}(x, \, t) = s^n(x, \, t)$ and $\varrho^{\sigma_n}(x, \, t) = X(s^n(x,t), \, w^n(x,t)) \cdot 1^N$. Thanks to the procedure just described, we find $s_n = \mathscr{P}(\varrho^{\sigma_n}, \, w^n)$. Then, the Lions compactness argument is reduced to obtaining
\begin{align*}
 \lim_{n \rightarrow \infty} \int_{Q_t} p^{\sigma_n} \, \varrho^{\sigma_n} \, dxd\tau =  &\lim_{n \rightarrow \infty} \int_{Q_t} \mathscr{P}(\varrho^{\sigma_n}, \, w^n) \, \varrho^{\sigma_n} \, dxd\tau \geq \int_{Q_t} p \, \varrho \, dxd\tau \, ,
\end{align*}
This property has been proved among others in \cite{dredrugagu17c}. We recall it briefly, abbreviating for simplicity $\varrho^n = \varrho^{\sigma_n}$ etc.
We start from the inequality $(\mathscr{P}( \varrho^n, \, w^n) - \mathscr{P}(\tilde{\varrho}, \, w^n)) \, (\varrho^n - \tilde{\varrho}) \geq 0$ valid for arbitrary positive $\tilde{\varrho}$. Integrated against any smooth nonnegative testfunction $\phi$, this yields
\begin{align*}
 \int_{Q_t} \mathscr{P}( \varrho^n, \, w^n) \, \varrho^n \, \phi \, dxd\tau \geq & \int_{Q_t} \mathscr{P}( \varrho^n, \, w^n)\, \tilde{\varrho}\, \phi \, dxd\tau  + \int_{Q_t} \mathscr{P}(\tilde{\varrho}, \, w^n) \, (\varrho_n - \tilde{\varrho}) \, \phi \, dxd\tau \, .
\end{align*}
For $m \in \mathbb{N}$, we let $\tilde{\varrho}^m$ be a smooth positive function approximating $\varrho$ 
\begin{align}\label{approxprop}
 \tilde{\varrho}^m \rightarrow \varrho \text{ in } L^{1+\gamma}(Q_T)\, ,
\end{align}
and such that we can find for all $m \in \mathbb{N}$ some positive constants $a_m, \, b_m$ for which $0< a_m \leq \tilde{\varrho}^m \leq b_m < + \infty$. Suppose now that for $m$ fixed, we can establish for all $n$ the property
\begin{align}\label{PDERIV}
\sup_n |\diff_{w}\mathscr{P}(\tilde{\varrho}^m, \, w^n)| \leq C(a_m, \, b_m) < + \infty\, .
\end{align}
Then, \eqref{PDERIV} can be combined with the uniform estimate \eqref{ROBUST2} for $\nabla w^n$ in $L^2$. The chain rule for Sobolev functions yields a uniform bound in $L^2(Q_T; \, \mathbb{R}^3)$ for the sequence $\{\nabla \mathscr{P}(\tilde{\varrho}^m, \, w^n)\}$. Since $\partial_t\varrho_n$ is uniformly bounded in $L^2(0,T; \, (W^{1,6\gamma/(5\gamma-6)}(\Omega))^*)$ (see the uniform estimates in \cite{dredrugagu17b}), a div-curl type argument then shows that $$\int_{Q_t} \mathscr{P}(\tilde{\varrho}^m, \, w^n) \, (\varrho^n - \tilde{\varrho}^m) \, \phi \, dxd\tau \rightarrow \int_{Q_t} \beta_m \, (\varrho - \tilde{\varrho}_m) \, \phi \, dxd\tau \, .$$
Here $\beta_m$ is a weak limit of $\mathscr{P}(\tilde{\varrho}^m, \, w^n)$. We can show that the functions $\beta_m$ are bounded in $L^{1+1/\gamma}(Q_T)$ independently of $m$. Indeed, owing to the pressure growth, there are some constants $c_0,c_1$ such that $c_0 \, (r^{\gamma} - 1) \leq \mathscr{P}(r, \, w) \leq c_1 \,  (r^{\gamma} + 1)$, and \eqref{approxprop} is valid. 
Thus
\begin{align*}
 \liminf_{n \rightarrow \infty}  \int_{Q_t} \mathscr{P}( \varrho^n, \, w^n) \, \varrho^n \, \phi \, dxd\tau \geq & \int_{Q_t} p\, \tilde{\varrho}^m\, \phi \, dxd\tau + \int_{Q_t} \beta_m \, (\varrho - \tilde{\varrho}^m) \, \phi \, dxd\tau \, .
\end{align*}
Letting $m$ tend to $\infty$, we get \eqref{cacs}. Therefore we can expect that, for typical approximation schemes, the total mass densities converges strongly in $L^{1}(Q_T)$.


The remainder of the paper is devoted to providing the complete proofs of the ideas sketched in this section. At this level, a somewhat more technical presentation cannot be completely avoided.

\section{Analysis of the characteristic curves}

We recall the notation $\bar{v}^0 := g^{\prime}_i(p_0)$ with a reference pressure $p_0 \in ]0, \, + \infty[$.

%
%
%

We define the vector field $u = u(\rho)$ according to \eqref{darstellunguprime}. This means that $u$ solves $D^2h(\rho) \, u = 1^N$.
We consider for $w \in S_0 := \{\rho \in \mathbb{R}^N_+ \, : \, \sum_{i=1}^N \bar{v}^0_i \, \rho_i = 1\}$ the ordinary differential equations
\begin{align}\label{ODE2}
\dot{X}_i(s; \, w) = \frac{1}{X(s; \, w) \cdot 1^N} \, u_i(X(s; \, w)) \text{ for } s \in ]0, \, + \infty[, \quad X(p_0; \, w) = w \, . 
\end{align}
\begin{lemma}\label{characteristics}
Assume that the functions $g_i$ comply to the assumptions {\rm (A)}. Then, for all $w \in S_0$, the equations \eqref{ODE2} possess a global solution $s \mapsto X(s, \, w)$ of class $C^1(]0, \, + \infty[;\, \mathbb{R}^N_+)$. The map $w \mapsto X(s, \, w)$ is continuously differentiable on $S_0$ for all $s > 0$. For each $\rho \in \mathbb{R}^N_+$, the equations $\rho = X(s, \, w)$ possess a unique solution $(s, \, w) \in ]0, \, + \infty[ \times S_0$. Moreover, the following bounds are valid:
\begin{enumerate}[(a)]
\item \label{Xtot} $ \frac{1}{\max_{j=1,\ldots,N} g_j^{\prime}(s)} \leq |X(s, \, w)|_1 \leq \frac{1}{\min_{j=1,\ldots,N} g_j^{\prime}(s)}$ for all $(s, \, w) \in ]0, \, + \infty[ \times S_0$;
\item \label{Quotients} For $k = 1,\ldots,N$, the quotient $F_k(s, \, w) := X_k(s, \, w)/ w_k$ is bounded below (above) by a function $\underline{F}_k$ ($\bar{F}_k$) depending only on $s$. On compact sets of $]0, \, + \infty[$, the functions $\underline{F}_k, \, \bar{F}^k$ are continuous, bounded and positive;
\item \label{sderiv}  $|\partial_sX(s, \, w)| \leq  (\max m - \min m) \, (1 + \frac{ \max g^{\prime}(s)}{\min g^{\prime}(s)}) + \frac{\max |g^{\prime\prime}(s)|}{\min g^{\prime}(s))^2}$;
\item \label{wderiv} Denote $\diff_w$ the tangential gradient on $S_0$. Then there is $C>0$ such that $| \diff_{w}X(s, \, w)| \leq C \, \sup_{k} \bar{F}_k(s) \, (1+ \frac{\sup g^{\prime}(s)}{\inf g^{\prime}(s)})$ with $\bar{F}$ from \eqref{Quotients}.
\end{enumerate}
\end{lemma}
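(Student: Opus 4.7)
I exploit the pressure identification $\hat p(X(s;w))=s$ already established in \eqref{identifypress}, together with the multiplicative structure of the vector field $u$, to reduce qualitative properties of the ODE \eqref{ODE2} to algebraic estimates read off \eqref{darstellunguprime}. Observe that \eqref{darstellunguprime} factors as $u_i(\rho)=\rho_i\,\Phi_i(\rho)$ with $\Phi_i$ locally Lipschitz on the open cone $\mathbb{R}^N_+$; standard Cauchy--Lipschitz theory then gives a unique maximal $C^1$ solution $X(\cdot;w)$, and the prefactor $X_i$ on the right of \eqref{ODE2} forces each component to remain strictly positive wherever the solution exists. As long as the solution is defined, combining $\hat p(X(s;w))=s$ with \eqref{PRESSURESTATE} yields $\sum_i g_i'(s)\,X_i(s;w)=1$, and (a) follows immediately from $X_i>0$.

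Next, I derive (b) in logarithmic form: the equation for $\ln X_k$ reads $\frac{d}{ds}\ln X_k(s;w)=\Phi_k(X(s;w))$ with $\Phi_k(\rho):=\varrho^{-1}\bigl(m_k(1-\varrho\,g_k'(\hat p(\rho)))+a(\rho)\bigr)$. Bound (a) already controls $\varrho$ and each $\rho_i$ by a function of $s$ alone, so together with (A1)--(A2) it dominates $|\Phi_k(X(s;w))|$ by a continuous $\Psi_k(s)$ independent of $w\in S_0$. Integrating from $p_0$ to $s$ yields (b) with $\underline F_k(s)=\exp\bigl(-\bigl|\int_{p_0}^{s}\Psi_k(\tau)\,d\tau\bigr|\bigr)$ and $\bar F_k(s)=\exp\bigl(\bigl|\int_{p_0}^{s}\Psi_k(\tau)\,d\tau\bigr|\bigr)$. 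The lower bound shows that the maximal solution cannot leave a compact subset of $\mathbb{R}^N_+$ in finite $s$-time, hence $X(\cdot;w)$ is global on $]0,+\infty[$. Estimate (c) will be obtained by plugging (a) back into $|\partial_s X_i|=|u_i(X)|/|X|_1$ and regrouping $m_i\bigl(1-\varrho\,g_i'(s)\bigr)$ with the mass-weighted contributions inside $a(\rho)$, so that differences $m_i-m_j$ isolate the terms which vanish for equal masses, while the contribution of $\varrho\sum_j g_j''(s)\,\rho_j$ furnishes the $\max|g''|/(\min g')^2$ term. Estimate (d) will follow from the standard variational equation $\partial_s(\diff_w X)=D_\rho(u/\varrho)(X)\,\diff_w X$, applied on $S_0$, by Gronwall's inequality on compact $s$-intervals and the previously derived control of the Jacobian via (a)--(c).

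Bijectivity is handled last: for each target $\rho\in\mathbb{R}^N_+$ I set $s_0:=\hat p(\rho)$, flow \eqref{ODE2} backwards from $(s_0,\rho)$ down to $s=p_0$, and define $w$ as the endpoint; the pressure identification yields $\hat p(w)=p_0$, i.e.\ $w\in S_0$, while uniqueness of the Cauchy problem makes $w$ the only preimage of $\rho$ in $]0,+\infty[\times S_0$. The hard part is Step~3: blow-up of the solution is ruled out by (a), but the more delicate issue of some $X_k$ collapsing to zero in finite $s$-time must be excluded via the uniform-in-$w$ bound on $\Phi_k$, which is itself available only because (a) allows us to treat $\varrho$, the $X_i$, and the values $g_i'(s), g_i''(s)$ as quantities depending on $s$ alone once $s$ varies in a compact subinterval of $]0,+\infty[$.
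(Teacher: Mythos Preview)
Your plan is essentially sound and yields the qualitative content of the lemma, but it proceeds along a genuinely different path from the paper, and one of the quantitative claims deserves a caveat.

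\textbf{Difference in strategy.} The paper does not run a Gronwall argument at all. It exploits the conservation law \eqref{Proper}, namely $\eta\cdot\nabla_\rho h(X(s,w))=\eta\cdot\nabla_\rho h(w)$ for $\eta\perp 1^N$, together with the explicit form $\hat\mu_i=g_i(s)+\tfrac{1}{m_i}\ln\hat x_i$, to derive a \emph{semi-explicit representation} \eqref{representationX} of $X(s,w)$ in terms of $s$, $\hat x(w)$, and an implicit scalar $\hat\chi$ solving $\sum_j a_j\chi^{m_j}=1$. From this formula the paper reads off (a), (b), and the inverse map \eqref{representationw} directly, with explicit $\underline F_k,\bar F_k$ (see \eqref{barF}, \eqref{underlineF}). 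For (d), the paper again uses \eqref{Proper}: differentiating it tangentially in $w$ yields an \emph{algebraic} equation \eqref{poule} for $\diff_{w_i}X$, which is solved by inverting $D^2h(X)$ via Lemma~\ref{inversehess}. No variational ODE, no Gronwall. What the paper's route buys is explicit formulas: the representation \eqref{representationw} and the function $\hat\chi$ reappear in \eqref{Fnew}--\eqref{meaningsmallg} to \emph{define} the singular coefficients $\sigma_{i,k}$ entering the structural hypothesis (B3), so the explicit approach is load-bearing for the rest of the paper.

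\textbf{Where your route is thinner.} Your Gronwall argument for (d) produces a bound of the form $|\diff_w X(s,w)|\le C\exp\bigl(\int_{p_0}^{s}\|D_\rho(u/\varrho)(X(\tau))\|\,d\tau\bigr)$, which indeed depends on $s$ alone once you bound the Jacobian by a function of $s$ via (a). But (d) as stated asks for the specific shape $C\sup_k\bar F_k(s)\,(1+\tfrac{\sup g'(s)}{\inf g'(s)})$ with your $\bar F_k(s)=\exp|\int_{p_0}^s\Psi_k|$, and it is not clear that the two exponentials of integrals are comparable uniformly in $s$: the Jacobian of $u/\varrho$ involves $\partial_{\rho_j}\hat p$ and hence the second derivatives $g''$, while your $\Psi_k$ already absorbs those, so the ratio of the two integrands need not be bounded near $s\to 0$ or $s\to\infty$. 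For the downstream application (Lemma~\ref{Lemmapress} only needs $|\diff_w\mathscr P|$ bounded by some continuous function of $\varrho$) your weaker conclusion ``bounded by a function of $s$'' suffices, but strictly speaking you have not established (d) in the precise form written. The paper's algebraic derivation, by contrast, lands directly on the factor $\sup_k F_k(s,w)$ and hence on $\bar F_k(s)$.

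Everything else in your plan---positivity via the multiplicative structure $u_i=\rho_i\Phi_i$, (a) from the pressure constraint, globality by confinement in compact subsets of $\mathbb R^N_+$, bijectivity by flowing back to $s=p_0$, and (c) by direct estimation of $u/\varrho$---is correct and, for (c), matches the paper's computation.
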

\begin{proof}
First we shall identify the solution to \eqref{ODE2} almost explicitly. If $X$ is a solution to \eqref{ODE2} assuming values in $\mathbb{R}^N_+$, then $s = \hat{p}(X(s, \, w))$ (see \eqref{identifypress} and the subsequent explanations). Due to \eqref{FE} and \eqref{CHEMPOT}, we have $\nabla_{\rho}h(\rho) = \hat{\mu}(\hat{p}(\rho), \, \hat{x}(\rho))$  for all $\rho \in \mathbb{R}^N_+$. If we combine these facts, then we see that
\begin{align}\label{Stern}
\partial_{\rho_i}h(X(s, \, w)) = & g_i(\hat{p}(X(s, \, w))) +\frac{1}{m_i} \, \ln \hat{x}_i(X(s, \, w)) \nonumber\\
 = & g_i(s) +\frac{1}{m_i} \, \ln \hat{x}_i(X(s, \, w)) \, .
 \end{align}
We also recall that $\frac{d}{ds} \nabla_{\rho} h(X(s, \, w)) \cdot \eta = 0$ for all $\eta \in \{1^N\}^{\perp}$ (see \eqref{Proper}). For $j = 1,\ldots,N$ and $k \in \{1,\ldots,N\}$ arbitrarily fixed, we multiply in \eqref{Stern} with the vector $e^j - e^k \in \{1^N\}^{\perp}$, where $e^1,\ldots,e^N$ are the standard basis vectors. We obtain that
\begin{align*}
&  g(p_0) \cdot (e^j -e^k) + \frac{1}{m_j} \, \ln \hat{x}_j(w) - \frac{1}{m_k} \, \ln \hat{x}_k(w) = \\
& \qquad = (e^j -e^k) \cdot \nabla_{\rho} h(w) = (e^j -e^k) \cdot \nabla_{\rho} h(X(s, \, w))\\
& \qquad =  g(s) \cdot (e^j -e^k) + \frac{1}{m_j} \, \ln \hat{x}_j(X(s, \, w)) - \frac{1}{m_k} \, \ln \hat{x}_k(X(s, \, w)) ) \, .
\end{align*}
For the fraction $\hat{x}_j(X(s, \, w))$, the latter yields a representation
 \begin{align}\label{hatchinull}
  \hat{x}_j(X(s, \, w)) = \hat{x}_j(w) \, \left(\frac{\hat{x}_k(X(s, \, w))}{\hat{x}_k(w)} \right)^{\frac{m_j}{m_k}} \, \exp(m_j \, (g(p_0)-g(s)) \cdot (e^j-e^k)) \, .
 \end{align}
We sum up over $j=1,\ldots,N$ and, since $\hat{x}$ maps into $S^1_+$, obtain that
\begin{align}\label{implicitfunction}
1 = \sum_{j=1}^N \hat{x}_j(w) \, \left(\frac{\hat{x}_k(X(s, \, w))}{\hat{x}_k(w)} \right)^{\frac{m_j}{m_k}} \, \exp(m_j \, (g(p_0)-g(s)) \cdot (e^j-e^k)) \, .
\end{align}
With $a_j = a_j(s, \, w) := \hat{x}_j(w) \, \exp(m_j \, (g_j(p_0)-g_j(s)))$, the identity \eqref{implicitfunction} is an equation of the form $\sum_{j=1}^N a_j \, \chi^{m_j} = 1$ for the variable $\chi := (\hat{x}_k(X(s, \, w))/\hat{x}_k(w))^{1/m_k} \, \exp(- (g_k(p_0)-g_k(s)))$. A brief analysis recalled in Lemma \ref{algebraic} shows that this equation possesses a uniquely determined and strictly positive root $\chi$. Moreover, we have $\chi = \hat{\chi}(a_1, \ldots, a_n)$ with a certain regular function of the entries of $a$. In particular, we can rely on the following estimates 
\begin{align}\label{ESTIMFORCHI}
\max_{i=1,\ldots,N} \chi^{m_i}(a) \leq \max\{1, \, (|a|_1)^{-\frac{\max m}{\min m}}\} \, , \, \quad \min_{i=1,\ldots,N} \chi^{m_i}(a) \geq \min\{1, \, (|a|_1)^{-\frac{\max m}{\min m}}\} \, .
\end{align}
Plugging this into \eqref{hatchinull}, we obtain the equation
\begin{align}\label{hatxindependentonk}
 \hat{x}_j(X(s, \, w)) = & a_j(s, \, w) \, \hat{\chi}^{m_j}(a_1, \ldots, a_n)  \text{ with } a_j(s, \, w) :=  \hat{x}_j(w) \, \exp(m_j \, (g_j(p_0)-g_j(s)))\, .
\end{align}
With the help of the latter relation, we can now derive a full representation of the solution to \eqref{ODE2}.
We make use of the identity $\sum_{i=1}^N g_i^{\prime}(s) \, X_i(s, \, w) = 1$ valid by definition of the pressure (cf. \eqref{PRESSURESTATE}), and further of $X_i(s, \, w) =\hat{n}(X(s, \, w)) \, m_i \, \hat{x}_i(X(s, \, w))$ (the definition of number fractions and number densities). For $\hat{n}$, we obtain the equivalent definition $\hat{n}(X(s, \, w)) = 1/\sum_{i=1}^N g_i^{\prime}(s) \, m_i \,  \hat{x}_i(X(s, \, w))$. Hence, invoking \eqref{hatxindependentonk} yields 
\begin{align}\label{hatn}
 \hat{n}(X(s, \, w)) = \frac{1}{\sum_{i=1}^N g_i^{\prime}(s) \, m_i \,  a_i(s, \, w) \, \hat{\chi}^{m_i}(a(s, \, w))} \, .
\end{align}
We combine \eqref{hatxindependentonk} and \eqref{hatn} with $X_k(s, \, w) = m_k \, \hat{n}(X(s, \, w)) \, \hat{x}_k(X(s, \, w))$ to obtain a representation
\begin{align}\label{representationX}
 X_k(s, \, w) = & \frac{ m_k \, a_k(s, \, w) \, \chi^{m_k}(a(s, \, w))}{\sum_{i=1}^N g_i^{\prime}(s) \, m_i \,  a_i(s, \, w) \, \hat{\chi}^{m_i}(a(s, \, w))} \\[0.4cm]
 a_k(s, \, w) := & \hat{x}_k(w) \, \exp(m_k \, (g_k(p_0)-g_k(s)))\, , \nonumber
\end{align}
and $\hat{\chi}(a) =: \chi$ defined to be the unique solution to the algebraic equation $\sum_{j=1}^N \chi^{m_j} \, a_j = 1$.

Now, we claim having found in \eqref{representationX} the global solution to \eqref{ODE2}, which can be verified briefly. Abbreviating $X = X(s, \, w)$, we indeed first notice that \eqref{representationX} implies, for all $j,k = 1,\ldots,N$, that
\begin{align*}
 \left(\frac{\hat{x}_j(X)}{\hat{x}_j(w)} \, \exp(m_j \, (g_j(p_0)-g_j(s)))\right)^{\frac{1}{m_j}} = \hat{\chi} =  \left(\frac{\hat{x}_k(X)}{\hat{x}_k(w)} \, \exp(m_k \, (g_k(p_0)-g_k(s)))\right)^{\frac{1}{m_k}} \, , 
\end{align*}
from which it is readily verified that $(e^j-e^k) \cdot (\hat{\mu}(s, \, \hat{x}(X)) - \hat{\mu}(p_0, \, \hat{x}(w))) = 0$. Since $k,j$ are arbitrary, this yields that the $X$ of \eqref{representationX} satisfies $\eta \cdot \nabla_{\rho}h(X(s,w)) = \eta \cdot \nabla_{\rho}h(w)$ for all $\eta \in \{1^N\}^{\perp}$. By these means, $\dot{X}(s, \, w)$ is orthogonal to $D^2h(X(s,w)) \, \eta$ for all $\eta \in \{1^N\}^{\perp}$, and we conclude that $\dot{X}(s, \, w)$ must be a vector parallel to $u(X(s, \, w))$. Next, the representation \eqref{representationX} also implies that $\sum_k g_k^{\prime}(s) \, X_k(s, \, w) = 1$ and therefore $s = \hat{p}(X(s, \, w))$ by the implicit definition of $\hat{p}$. Computing the derivative, this allows to show that $1 = \dot{X}(s, \, w) \, D^2h(X(s, \, w)) \, X(s, \, w)$. As we already know that $\dot{X}$ is parallel to $u = (D^2h)^{-1}1^N$, we find that $\dot{X}(s, \, w) = u(X(s, \, w))/(X(s, \, w) \cdot 1^N)$. Thus \eqref{representationX} provides the solution, and it can clearly be extended to a map defined on $]0, \, + \infty[ \times S_0$ with values in $\mathbb{R}^N_+$. 

Since $\hat{\chi}$ is differentiable, the formula \eqref{representationX} moreover shows that the map $w \mapsto X(s, \, w)$ is differentiable on $S_0$. 

We next prove the bijectivity of $X$, for which we first compute the inverse. The starting point is \eqref{hatchinull}, which we rephrase as
 \begin{align}\label{hatchinull1}
 \hat{x}_j(w) = \left(\frac{\hat{x}_k(w)}{\hat{x}_k(X(s, \, w))} \right)^{\frac{m_j}{m_k}} \, \exp(m_j \, (g(p_0)-g(s)) \cdot (e^k-e^j)) \, \, \hat{x}_j(X(s, \, w))  \, .
 \end{align}
We sum up, and we obtain the equation
\begin{align*}
 1 = \sum_{j=1}^N \left( (\frac{\hat{x}_k(w)}{\hat{x}_k(X(s, \, w))})^{\frac{1}{m_k}} \, e^{g_k(p_0)-g_k(s)} \right)^{m_j} \, \exp(- m_j \, (g_j(p_0)-g_j(s)) ) \, \hat{x}_j(X(s, \, w)) \, .
\end{align*}
This is an equation of the form $\sum_{j=1}^N b_j \, \phi^{m_j} = 1$.
It follows that $\phi = \hat{\chi}(b_1, \ldots, b_N)$ with $b_j = b_j(X)$ given as $b_j(X) := \hat{x}_j(X) \, \exp(- m_j \, (g_j(p_0)-g_j(\hat{p}(X))))$ and that
\begin{align*}
 \hat{x}_k(w) = \hat{x}_k(X) \, \phi^{m_k} \, \exp(-m_k \, (g_k(p_0)-g_k(\hat{p}(X)))) \, .
\end{align*}
In order to obtain $w$ as a function of $X$, we recall that $w \in S_0$ so that $w \cdot \bar{v}^0 = 1$. Therefore, we can represent $w_k$ as a function of $\hat{x}(w)$ via $w_k = m_k \, \hat{x}_k(w)/ (\sum_{\ell = 1}^N \bar{v}^0_{\ell} \, m_{\ell} \, \hat{x}_{\ell}(w))$. Thus
\begin{align}\label{representationw}
 w_k = & \frac{ m_k \, b_k(X) \, \hat{\chi}^{m_k}(b(X))}{\sum_{\ell=1}^N \bar{v}^0_{\ell} \, m_{\ell} \,  b_{\ell}(X) \, \hat{\chi}^{m_{\ell}}(b(X))} \text{ with }  b_k(X) :=  \hat{x}_k(X) \, \exp(-m_k \, (g_k(p_0)-g_k(\hat{p}(X))))\, ,
\end{align}
and $\hat{\chi}(b) =: \phi$ defined to be the unique solution to the algebraic equation $\sum_{j=1}^N \phi^{m_j} \, b_j = 1$.

For $\rho \in \mathbb{R}^N_+$, one therefore has $\rho = X(s, \, w)$ if and only if $s = \hat{p}(\rho)$ and $w = w(X)$ is given by \eqref{representationw} at $X = \rho$. This proves that $X$ is bijective.

Next we prove the estimates. 

Ad \eqref{Xtot}. From \eqref{representationX}, we directly deduce that the bound \eqref{Xtot} is valid.

Ad \eqref{Quotients}. By means of \eqref{representationX}, we readily identify $F_k(s, \, w) = X_k(s, \, w)/ w_k$ as
\begin{align}\label{Ffirst}
  F_k(s, \, w) = & \frac{\exp(m_k \, (g_k(p_0)-g_k(s))) \, \chi^{m_k}(a(s, \, w))}{(\sum_{j=1}^N w_j/m_j) \, (\sum_{j=1}^N g_j^{\prime}(s) \, m_j \,  a_j(s, \, w) \, \hat{\chi}^{m_j}(a(s, \, w)))} \, . 
\end{align}
Owing to $\sum_{j=1}^N (w_j/m_j) \geq w \cdot 1^N/|m|_{\infty} \geq 1/(|m|_{\infty} \, |\bar{v}^0|_{\infty})$, and to the definition of $\chi$, it follows that
\begin{align*}
  F_k(s, \, w) \leq \frac{|m|_{\infty} \, |\bar{v}^0|_{\infty}}{\min m \, \min g^{\prime}(s)} \, \exp(m_k \, (g_k(p_0)-g_k(s))) \, \chi^{m_k}(a(s, \, w))\, ,
  \end{align*}
  and now \eqref{ESTIMFORCHI} and the fact that $|a|_1 \geq \min_{\ell} \exp(m_\ell \, (g_\ell(p_0)-g_\ell(s)))$ yield
  \begin{align}\label{barF}
  F_k(s, \, w)  \leq & \frac{|m|_{\infty} \, |\bar{v}^0|_{\infty}}{\min m \, \min g^{\prime}(s)} \, \exp(m_k \, (g_k(p_0)-g_k(s))) \times \nonumber\\
  & \times \max\{1, \, \frac{1}{\min_{\ell} \exp(m_\ell \, (g_\ell(p_0)-g_\ell(s)))}\}^{\frac{\max m}{\min m}}  =:  \bar{F}_k(s) \, .
\end{align}
Similarly, we obtain the lower bound
\begin{align}\label{underlineF}
 F_k(s, \, w) \geq & \frac{\min m \, \min \bar{v}^0}{ |m|_{\infty} \, \max g^{\prime}(s)} \, \exp(m_k \, (g_k(p_0) - g_{k}(s))) \times \nonumber\\
 & \times \min\{1, \, \frac{1}{\sum_{j=1}^N \exp(m_j \, (g_j(p_0) - g_{j}(s)))}\}^{\frac{ \max m}{\min m}} =:  \underline{F}_k(s) \, .
\end{align}

We next estimate the derivatives.

Ad \eqref{sderiv}. Recall \eqref{darstellunguprime} to see that
\begin{align*}
 \frac{u_i(\rho)}{\rho_i} = & m_i -\sum_{j=1}^N g_j^{\prime}(\hat{p}(\rho)) \, \rho_j \, m_j + \varrho \, \big( -  m_i \, g_i^{\prime}(\hat{p}(\rho)) + \sum_{j=1}^N [m_j \, (g_j^{\prime}(\hat{p}(\rho)))^2 - g_j^{\prime\prime}(\hat{p}(\rho)) ] \, \rho_j \big) \, .
\end{align*}
Since $\sum_{j=1}^N g_j^{\prime}(\hat{p}(\rho)) \, \rho_j = 1$, we obtain the bound
\begin{align*}
\frac{|u_i(\rho)|}{\rho_i \, \varrho} \leq & \frac{\max m - \min m}{\varrho} + \max \{m \, g^{\prime}(\hat{p}(\rho))\} -   \min \{m \, g^{\prime}(\hat{p}(\rho))\} + \varrho \, |g^{\prime\prime}(\hat{p}(\rho))|_{\infty} \, .
\end{align*}
It follows that
\begin{align*}
\frac{|u(\rho)|}{\varrho} \leq & (\max m - \min m) \, (1 + \varrho \, \max g^{\prime}(\hat{p}(\rho)) ) + \varrho^2 \, |g^{\prime\prime}(\hat{p}(\rho))|_{\infty} \, .
\end{align*}
With the help of \eqref{Xtot}, we therefore verify for $X$ satisfying \eqref{ODE2} that
\begin{align*}
 |\dot{X}(s)| \leq \frac{|u(X(s))|}{X(s) \cdot 1^N} \leq (\max m - \min m) \, \left(1 + \frac{ \max g^{\prime}(s)}{\min g^{\prime}(s)}\right) + \frac{|g^{\prime\prime}(s)|_{\infty}}{\min g^{\prime}(s))^2} \, \, . 
\end{align*}

Ad \eqref{wderiv}. Due to the formula \eqref{representationX}, the map $w \mapsto X(s, \, w)$ is differentiable on $S_0$. To more easily compute the derivatives, we start again from the identity $(\nabla_{\rho} h(X(s, \, w)) - \nabla_{\rho} h(w)) \cdot \eta = 0$ for all $\eta \in \{1^N\}^{\perp}$ (see \eqref{Proper}). Consider $\tau^i := e^i - \nu_i \, \nu$ a tangent on $S_0$, and differentiate in the direction of $\tau^i$ the latter identity. Then, with $\diff_{w_i} := \tau^i \cdot \nabla_w$ denoting the tangential derivatives on $S_0$, we obtain for all $\eta \in \{1^N\}^{\perp}$ that $\eta \cdot( D^2h(X(s, \, w)) \, \diff_{w_i}X(s, \, w) -D^2h(w) \tau^i) = 0$. Therefore, there must exist some real number $r_i$ such that
\begin{align*}
D^2h(X(s, \, w)) \, \diff_{w_i}X(s, \, w) = D^2h(w) \tau^i + r_i \, 1^N \, .
\end{align*}
We multiply from left with $X(s, \, w)$. Recall that $D^2h(\rho) \, \rho = \nabla_{\rho}\hat{p}(\rho)$. Since $$\nabla_{\rho}\hat{p}(X(s,w)) \cdot \diff_{w_i}X(s, \, w) = \diff_{w_i} \hat{p}(X(s, w)) = \diff_{w_i} s = 0 \, ,$$ we identify $r_i$ as $- X(s,w) \cdot D^2h(w) \tau^i /X(s,w) \cdot 1^N$. Hence
\begin{align}\label{poule}
D^2h(X(s, \, w)) \, \diff_{w_i}X(s, \, w) = D^2h(w) \tau^i - \frac{D^2h(w) \tau^i \cdot X(s,w)}{X(s,w) \cdot 1^N} \, 1^N =: q^i(s,w) \, .
\end{align}
The inversion formula for $D^2h$ in Lemma \ref{inversehess}, and use of $\diff_{w_i}X(s, \, w) \cdot \hat{p}_{\rho}(X(s,w)) = 0$ yield the result
\begin{align}\label{Dw}
 & \diff_{w_i}X_j(s, \, w) \nonumber\\
 & = X_j(s,w) \, (m_j  \, q^i_j(s,w) - \frac{1}{\hat{p}_{\rho}(X(s,w))\cdot X(s,w)} \, \sum_{\ell=1}^N \hat{p}_{\rho_{\ell}}(X(s,w)) \, m_{\ell} \, X_{\ell}(s,w) \, q^i_{\ell}(s,w)) \nonumber\\
&  =X_j(s,w) \, (m_j  \, q^i_j(s,w) - \sum_{\ell=1}^N g_{\ell}^{\prime}(s) \, m_{\ell} \, X_{\ell}(s,w) \, q^i_{\ell}(s,w)) \, .
\end{align}
Here we also made use of the formula \eqref{pressderiv} for the derivative of the pressure.

In order to obtain estimates, we first use \eqref{HESS}. Together with the fact that $w \in S_0$, it implies that
\begin{align*}
D^2_{k,\ell} h(w) = \frac{1}{m_k} \, \left(\frac{\delta_{k,\ell}}{w_{\ell}} - \frac{1}{m_{\ell} \, \hat{n}(w)}\right) - \frac{g_{k}^{\prime}(p_0) \,   
g_{\ell}^{\prime}(p_0)}{\sum_{j} g_j^{\prime\prime}(p_0) \, w_j} \, .
\end{align*}
Therefore, for any $k,\ell$, the expressions $|D^2_{k,\ell} h(w) \, X_{\ell}(s, \, w)|$ are bounded by some uniform constant $C$ times the quotient $F_k(s, \, w) := X_{k}(s,w)/w_k$. For the functions $q$ introduced in \eqref{poule}, we obtain that
\begin{align*}
 |X_j(s,w) \, q^i_j(s,w)| \leq \sum_{\ell} |X_j(s,w) \, D^2_{j,\ell} h(w) \, \tau^{i}_{\ell}| + |D^2h(w) \tau^i \cdot X(s,w)| \leq C \, \sup_{k} F_k(s,\, w) \, .
 \end{align*}
Now, by means of \eqref{Dw}, we find that
\begin{align*}
| \diff_{w_i}X_j(s, \, w)| \leq |m|_{\infty} \, C \, \sup_{k} F_k(s,\, w) \, (1 + X_j(s,w) \, \sum_{\ell=1}^N g_{\ell}^{\prime}(s)) \, ,
\end{align*}
and, recalling \eqref{Xtot}, it follows that $| \diff_{w}X(s, \, w)| \leq \tilde{C} \, \sup_{k} F_k(s,\, w) \, (1+ \max g^{\prime}(s)/\min g^{\prime}(s))$. We invoke \eqref{barF} to see that $ | \diff_{w}X(s, \, w)|$ is bounded by a continuous function depending only on $s$. 
\end{proof}
We next provide the two auxiliary lemmas used in the preceding proof.
\begin{lemma}\label{algebraic}
For $a \in \mathbb{R}^N_+$, the equation $\sum_{j=1}^N a_j \, \chi^{m_j} = 1$ possesses a unique positive solution $\chi$ subject to the bounds $\min\{1, \, \frac{1}{|a|_1}\}^{\frac{1}{\min m}} \leq \chi \leq \max\{1, \, \frac{1}{|a|_1}\}^{\frac{1}{\min m}}$. The map $a \mapsto \chi =: \hat{\chi}(a)$ is continuously differentiable on $\mathbb{R}^N_+$. 
\end{lemma}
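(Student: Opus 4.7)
The plan is to treat this as a standard one-variable monotonicity plus implicit function theorem argument. First I would study the scalar function $F(\chi) := \sum_{j=1}^N a_j \chi^{m_j}$ defined for $\chi > 0$. Since every $a_j > 0$ and every $m_j > 0$, $F$ is smooth on $]0,+\infty[$ with derivative $F'(\chi) = \sum_j a_j m_j \chi^{m_j-1} > 0$, so $F$ is strictly increasing. Together with $\lim_{\chi \to 0^+} F(\chi) = 0$ and $\lim_{\chi \to +\infty} F(\chi) = +\infty$, the intermediate value theorem yields exactly one positive solution $\chi$ of $F(\chi) = 1$.

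Next I would prove the stated bounds by splitting according to whether $|a|_1 \leq 1$ or $|a|_1 \geq 1$, using the elementary comparison that $\chi^{m_j} \geq \chi^{\min m}$ when $\chi \geq 1$ and $\chi^{m_j} \leq \chi^{\min m}$ when $0 < \chi \leq 1$. If $|a|_1 \leq 1$, then plugging $\chi < 1$ gives $F(\chi) < |a|_1 \leq 1$, contradicting $F(\chi)=1$; hence $\chi \geq 1$, and then $1 = F(\chi) \geq |a|_1 \, \chi^{\min m}$, so $\chi \leq (1/|a|_1)^{1/\min m}$. If $|a|_1 \geq 1$, the symmetric argument rules out $\chi > 1$ and then gives $1 = F(\chi) \leq |a|_1 \, \chi^{\min m}$, so $\chi \geq (1/|a|_1)^{1/\min m}$. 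Both cases combined yield the claimed two-sided bound
\[
\min\Bigl\{1, \tfrac{1}{|a|_1}\Bigr\}^{\frac{1}{\min m}} \leq \chi \leq \max\Bigl\{1, \tfrac{1}{|a|_1}\Bigr\}^{\frac{1}{\min m}} \, .
\]

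Finally, for the smoothness of $a \mapsto \hat{\chi}(a)$, I would invoke the implicit function theorem on $G(\chi, a) := \sum_{j=1}^N a_j \chi^{m_j} - 1$, which is of class $C^\infty$ on $]0, +\infty[ \times \mathbb{R}^N_+$. At a root $(\hat\chi(a), a)$ we have $\partial_\chi G(\hat\chi(a), a) = \sum_j a_j m_j \hat\chi(a)^{m_j-1} > 0$, so the theorem provides a local $C^1$ (in fact $C^\infty$) resolvent, and by uniqueness this local branch coincides with $\hat\chi$. Since $a \in \mathbb{R}^N_+$ was arbitrary, $\hat\chi \in C^1(\mathbb{R}^N_+)$. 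I do not anticipate a serious obstacle: the only mildly delicate point is making sure the case distinction in the bounds is handled cleanly (the appearance of $\min m$ rather than $\max m$ in both extremes is precisely what the monotonicity comparison yields in each regime of $\chi$ versus $1$).
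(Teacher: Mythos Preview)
Your proof is correct and follows essentially the same approach as the paper: monotonicity of $\chi \mapsto \sum_j a_j \chi^{m_j}$ for existence and uniqueness, the implicit function theorem for $C^1$ regularity, and the comparison $\chi^{m_j} \gtrless \chi^{\min m}$ for the bounds. The only cosmetic difference is that the paper splits the bound argument according to whether $\hat\chi \gtrless 1$ rather than whether $|a|_1 \gtrless 1$, but the two case distinctions are equivalent and the inequalities used are identical.
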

\begin{proof}
The function $F(\chi; \, a) = \sum_{j=1}^N a_j \, \chi^{m_j} - 1$ is monotone on $]0, \, +\infty[$, negative in $0$ and it tends to $+\infty$ for $\chi \rightarrow +\infty$. Clearly, there is exactly on positive $\hat{\chi}$ such that $F(\hat{\chi}; \, a) = 0$. Thus $\partial_{\chi} F(\hat{\chi}; \, a) > 0$ and the map $a \mapsto \hat{\chi}$ is differentiable. 

If $\hat{\chi} > 1$, then by definition $1 \geq |a|_1 \, \hat{\chi}^{\min m}$ proving the upper bound. 
If $\hat{\chi} < 1$, then $1 \leq |a|_1 \, \hat{\chi}^{\min m}$ implies the claimed lower bound. 
\end{proof}
\begin{lemma}\label{inversehess}
Let $\rho \in \mathbb{R}^N_+$, and $D^2h(\rho)$ is given by \eqref{HESS}. Then with $\Lambda := \sum_{k = 1}^N V_{\rho_k}^2 \, \rho_k \, m_k - V_p$ 
\begin{align*}
 (D^2h(\rho))^{-1}_{i,j} = m_i \, \rho_i \, \delta_{i,j} - (m_i \, \rho_i\, V_{\rho_i} \, \rho_j +  m_j \, \rho_j\, V_{\rho_j} \, \rho_i) + \Lambda(\rho) \, \rho_{i} \, \rho_j \, . 
\end{align*}
In these formula, $V(p,\, \rho) := \sum_i g_i^{\prime}(p) \, \rho_i$ and its derivatives $V_{\rho}, \, V_p$ are evaluated at $(\hat{p}(\rho), \, \rho)$.
\end{lemma}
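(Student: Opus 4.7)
The claim is a purely algebraic identity about the inverse of a specific $N\times N$ matrix, so my plan is to verify it by direct computation: denote $A := D^2 h(\rho)$ and let $B$ be the matrix given in the lemma, and show $(AB)_{ij} = \delta_{ij}$. The structure of $A$ as a diagonal plus two rank-one perturbations is what makes this tractable.

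First I would simplify \eqref{HESS} using $\hat{x}_i(\rho) = \rho_i/(m_i\,\hat n(\rho))$, which collapses the Kronecker term to $\delta_{ij}/(m_i\rho_i)$ and puts $A$ in the convenient form
\[
A_{ij} \;=\; \frac{\delta_{ij}}{m_i\rho_i} \;-\; \frac{1}{m_i m_j\,\hat n(\rho)} \;-\; \frac{V_{\rho_i}\,V_{\rho_j}}{V_p},
\]
where $V_{\rho_i}=g_i'(\hat p(\rho))$ and $V_p=\sum_k g_k''(\hat p(\rho))\,\rho_k$. I would then split $A=A^{(1)}+A^{(2)}+A^{(3)}$ according to these three summands, and $B=B^{(1)}+B^{(2)}+B^{(3)}+B^{(4)}$ according to its four summands ($B^{(1)}=m_i\rho_i\delta_{ij}$, the two symmetric mixed rank-one terms, and $B^{(4)}=\Lambda\,\rho_i\rho_j$). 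The product $AB$ then breaks into twelve explicit pieces.

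The entire computation is driven by three scalar identities that I would apply repeatedly: $\sum_k \rho_k/m_k = \hat n(\rho)$ from the very definition of $\hat n$, the Euler-type relation $\sum_k V_{\rho_k}\rho_k = V(\hat p(\rho),\rho)=1$ furnished by the pressure state equation \eqref{PRESSURESTATE}, and $\sum_k V_{\rho_k}^2\,\rho_k\,m_k = \Lambda + V_p$ which is nothing but the definition of $\Lambda$. Using the first two, the contributions $A^{(1)}B^{(3)}$ and $A^{(2)}B^{(3)}$ cancel, as do $A^{(1)}B^{(4)}$ with $A^{(2)}B^{(4)}$ and $A^{(2)}B^{(1)}$ with $A^{(2)}B^{(2)}$, so that $(A^{(1)}+A^{(2)})B$ reduces to $\delta_{ij}-V_{\rho_i}\rho_j$. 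On the $A^{(3)}$ side, $A^{(3)}B^{(1)}$ and $A^{(3)}B^{(3)}$ cancel against each other (using $\sum_k V_{\rho_k}\rho_k=1$ again), and $A^{(3)}(B^{(2)}+B^{(4)})$ evaluates to $\tfrac{V_{\rho_i}\rho_j}{V_p}\bigl(\sum_k V_{\rho_k}^2 m_k\rho_k - \Lambda\bigr)=V_{\rho_i}\rho_j$ by the third identity. Adding both contributions yields $(AB)_{ij}=\delta_{ij}$.

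There is essentially no obstacle here beyond careful bookkeeping of the twelve products; the statement is a Sherman--Morrison--Woodbury-type inversion of a rank-two perturbation of a diagonal matrix, and one could alternatively derive $B$ from SMW applied to $A = \mathrm{diag}(1/(m_i\rho_i)) - u\otimes u + (1/V_p)\,V_\rho\otimes V_\rho$ with $u_i=1/(m_i\sqrt{\hat n})$. I would nevertheless prefer the direct verification above, since it is short and makes transparent which of the thermodynamic identities (normalisation of $V$, definition of $\hat n$, and definition of $\Lambda$) is responsible for each cancellation.
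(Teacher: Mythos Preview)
Your proposal is correct; the twelve-term bookkeeping checks out exactly as you describe, and the three scalar identities you list are precisely the ones needed. However, the route is genuinely different from the paper's. The paper does not verify $AB=I$ but instead \emph{derives} the inverse: it fixes $v,w$ with $D^2h(\rho)\,v=w$, first dots with $\rho$ (using $D^2h(\rho)\,\rho=-V_\rho/V_p$) to obtain the scalar relation $V_\rho\cdot v=-V_p\,(w\cdot\rho)$, then uses the splitting $D^2h=D^2k-\tfrac{1}{V_p}\,V_\rho\otimes V_\rho$ (with $k$ the entropic part) to reduce to the simpler system $D^2k(\rho)\,v = w-(w\cdot\rho)\,V_\rho$; this system reads $\tfrac{v_i}{m_i\rho_i}-\tfrac{1}{m_i\hat n}\sum_j v_j/m_j = w_i-(w\cdot\rho)V_{\rho_i}$, and a second contraction with $m_i\rho_i V_{\rho_i}$ eliminates the remaining unknown $\sum_j v_j/m_j$, giving $v_i$ explicitly in terms of $w$. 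Your verification is shorter and more mechanical, and (as you note) transparently a Sherman--Morrison--Woodbury computation; the paper's approach is constructive, showing how one would \emph{find} the formula without knowing it, and has the side benefit of isolating the decomposition $D^2h=D^2k-\tfrac{1}{V_p}V_\rho\otimes V_\rho$ (your \eqref{HESSCOMP}), which is reused elsewhere in the paper.
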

\begin{proof}
Assume that $v, \, w \in \mathbb{R}^N$ are two vectors subject to $D^2h(\rho) \, v = w$. We multiply from left with $\rho$. 
Then, using that $\sum_{i} g_i^{\prime}(\hat{p}(\rho)) \, \rho_i = 1$, we find that $D^2h(\rho) \,\rho = - g^{\prime}(\hat{p}(\rho))/( \sum_j g^{\prime\prime}(\hat{p}(\rho)) \rho_j) = - V_{\rho}/V_p$, so that $-(V_{\rho}/V_p) \cdot v = D^2h(\rho) \, v \cdot \rho = w\cdot \rho$. Here $V_{\rho} = g^{\prime}(\hat{p}(\rho))$ and $V_p = \sum_j g^{\prime\prime}(\hat{p}(\rho)) \rho_j$. We introduce the positively homogeneous function 
\begin{align}\label{entropicpart}
k(\rho) =  \hat{n}(\rho) \, \hat{x}(\rho) \cdot \ln \hat{x}(\rho) \, , 
\end{align}
With the volume $V$ of \eqref{PRESSURESTATE}, we alternatively have 
\begin{align}\label{HESSCOMP}
D^2h(\rho) = D^2k(\rho) - \frac{1}{ \partial_{p}V(\hat{p}(\rho), \, \rho)} \,  \nabla_{\rho} V (\hat{p}(\rho), \, \rho) \otimes \nabla_{\rho} V (\hat{p}(\rho), \, \rho)\, .
\end{align}
Thus, $v$ is also a solution to $D^2k(\rho) \, v = w - (w\cdot\rho) \, V_{\rho}$, which means that
\begin{align*}
\frac{v_i}{m_i \, \rho_i} - \frac{1}{m_i \, n} \, \sum_{j=1}^N \frac{v_j}{m_j} = w_i - (w\cdot\rho) \, V_{\rho_i} \text{ for } i = 1,\ldots,N \, .
\end{align*}
We multiply with $m_i \, \rho_i \, V_{\rho_i}$ and sum up over $i=1,\ldots,N$ to find that
\begin{align*}
V_{\rho} \cdot v - \frac{1}{n} \, \sum_{j=1}^N \frac{v_j}{m_j} = \sum_{i=1}^N m_i \, \rho_i \, V_{\rho_i} \, (w_i - (w\cdot\rho) \, V_{\rho_i}) \, .
\end{align*}
Since we know that $V_{\rho} \cdot v = - V_p \, w\cdot \rho$, we can eliminate the quantity $\frac{1}{n} \, \sum_{j=1}^N \frac{v_j}{m_j}$. Hence
\begin{align*}
\frac{v_i}{m_i \, \rho_i} = &  w_i - (w\cdot\rho) \, V_{\rho_i} - \frac{1}{m_i} \, [V_p \, w\cdot \rho + \sum_{j=1}^N  m_j \, \rho_j \, V_{\rho_j} \, (w_j- (w\cdot\rho) \, V_{\rho_j})] \, .
\end{align*}
The claim follows. 
\end{proof}
In order to complete the picture about the change of variables, it remains to introduce the equivalent representation of the pressure needed to prove the compactness (see the Section \ref{PQ}). In particular, we must verify \eqref{PDERIV}.
\begin{lemma}\label{Lemmapress}
There is a function $\mathscr{P} \in C^1(]0, \, + \infty[ \times S_0)$ such that for $\rho \in \mathbb{R}^N_+$, $s > 0$ and $w \in S_0$ connected via $\rho = X(s, \, w)$, the identities $s = \hat{p}(\rho) = \mathscr{P}(\sum_{i} \rho_i, \, w )$ are valid. The function $\varrho \mapsto \mathscr{P}(\varrho, \, w)$ is strictly increasing. The norm of the tangential derivatives $\diff_w \mathscr{P}(\varrho, \, w)$ is bounded above by a continuous function of $\varrho$ only.
\end{lemma}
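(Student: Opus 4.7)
The plan is to construct $\mathscr{P}$ by inverting, for each fixed $w\in S_{0}$, the scalar map $s\mapsto \varrho(s,w):= X(s,w)\cdot 1^{N}$, and then to read off the tangential-derivative bound from the implicit definition together with Lemma \ref{characteristics}.

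First I would show that $s\mapsto \varrho(s,w)$ is strictly increasing on $]0,+\infty[$. Differentiating and using \eqref{ODE2} gives
\begin{align*}
\partial_{s}\varrho(s,w)=1^{N}\cdot \dot{X}(s,w)=\frac{1^{N}\cdot u(X(s,w))}{X(s,w)\cdot 1^{N}},
\end{align*}
and since $u(\rho)=(D^{2}h(\rho))^{-1}1^{N}$ with $D^{2}h$ positive definite on $\mathbb{R}^{N}_{+}$, the numerator is strictly positive. For a quantitative version I would plug the explicit form \eqref{darstellunguprime} into $1^{N}\cdot u$ and rearrange; the identity $\sum_{i}g_{i}^{\prime}(\hat{p}(\rho))\rho_{i}=1$ simplifies the algebra to
\begin{align*}
1^{N}\cdot u(\rho)=\sum_{i=1}^{N}m_{i}\rho_{i}\bigl(1-\varrho\,g_{i}^{\prime}(\hat{p}(\rho))\bigr)^{2}-\varrho^{2}\sum_{j=1}^{N}g_{j}^{\prime\prime}(\hat{p}(\rho))\,\rho_{j},
\end{align*}
in which both contributions are nonnegative by (A2), and the second is strictly positive (since $g_{j}^{\prime\prime}<0$). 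This yields the lower bound
\begin{align*}
\partial_{s}\varrho(s,w)\ \geq\ \varrho(s,w)^{2}\,\min_{j=1,\ldots,N}|g_{j}^{\prime\prime}(s)|>0,
\end{align*}
which will be crucial for the derivative estimate later.

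Next I would check that the image of $s\mapsto \varrho(s,w)$ is the full half-line. From Lemma \ref{characteristics}(a) one has $1/\max_{j}g_{j}^{\prime}(s)\leq \varrho(s,w)\leq 1/\min_{j}g_{j}^{\prime}(s)$; invoking assumption (A3) ($g_{j}^{\prime}\to \infty$ as $s\to 0$, $g_{j}^{\prime}\to 0$ as $s\to\infty$), one obtains $\varrho(s,w)\to 0$ as $s\to 0^{+}$ and $\varrho(s,w)\to +\infty$ as $s\to +\infty$. Monotonicity plus continuity give a unique inverse $\mathscr{P}(\cdot,w):\,]0,+\infty[\to ]0,+\infty[$; moreover, given that $X\in C^{1}(]0,+\infty[\times S_{0})$ (Lemma \ref{characteristics}) and $\partial_{s}\varrho>0$, the implicit function theorem (applied jointly in $(s,w)$) ensures $\mathscr{P}\in C^{1}(]0,+\infty[\times S_{0})$. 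The identities $\hat{p}(X(s,w))=s$ (already established in \eqref{identifypress}) and the bijectivity result of Lemma \ref{characteristics} then yield the required relations $s=\hat{p}(\rho)=\mathscr{P}(\sum_{i}\rho_{i},w)$ whenever $\rho=X(s,w)$.

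Finally I would derive the tangential derivative bound. Differentiating the identity $\varrho=X(\mathscr{P}(\varrho,w),w)\cdot 1^{N}$ in a tangential direction on $S_{0}$ yields
\begin{align*}
\diff_{w}\mathscr{P}(\varrho,w)=-\frac{1^{N}\cdot \diff_{w}X(s,w)}{\partial_{s}\varrho(s,w)}\bigg|_{s=\mathscr{P}(\varrho,w)}.
\end{align*}
For the numerator, Lemma \ref{characteristics}\eqref{wderiv} bounds $|1^{N}\cdot \diff_{w}X(s,w)|$ by a continuous function of $s$ alone; for the denominator, the lower bound $\varrho^{2}\min_{j}|g_{j}^{\prime\prime}(s)|$ established above provides a strictly positive continuous lower bound in terms of $s$. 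It remains to convert these continuous-in-$s$ bounds into continuous-in-$\varrho$ bounds. For $\varrho$ ranging in an arbitrary compact subinterval $[a,b]\subset ]0,+\infty[$, the envelope $\varrho\in[1/\max g^{\prime}(s),1/\min g^{\prime}(s)]$ from Lemma \ref{characteristics}(a), combined with the strict monotonicity and asymptotics of $g_{j}^{\prime}$ in (A2)--(A3), confines $s=\mathscr{P}(\varrho,w)$ to a compact subinterval of $]0,+\infty[$ depending only on $[a,b]$. On that subinterval all the $s$-dependent quantities are bounded, hence $|\diff_{w}\mathscr{P}(\varrho,w)|$ is bounded by a continuous function of $\varrho$ alone.

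The main obstacle is the quantitative lower bound on $\partial_{s}\varrho$; without the explicit rewriting of $1^{N}\cdot u(\rho)$ as a sum of manifestly nonnegative/positive terms exploiting (A2), the positivity coming abstractly from convexity of $h$ would be insufficient to produce the required continuous-in-$\varrho$ control of $\diff_{w}\mathscr{P}$.
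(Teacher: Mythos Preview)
Your argument is correct and follows essentially the same route as the paper: invert $s\mapsto X(s,w)\cdot 1^{N}$ via the implicit function theorem, use Lemma~\ref{characteristics}\eqref{wderiv} to bound the numerator of $\diff_{w}\mathscr{P}$, and a quantitative lower bound on $\partial_{s}(X\cdot 1^{N})$ for the denominator, finally converting $s$-bounds into $\varrho$-bounds via Lemma~\ref{characteristics}\eqref{Xtot}. The only notable difference is how the lower bound on $\partial_{s}\varrho$ is obtained: the paper uses the splitting $D^{2}h=D^{2}k-V_{\rho}\otimes V_{\rho}/V_{p}$ to get $u\cdot 1^{N}=D^{2}h\,u\cdot u\geq |V_{p}|\,(X\cdot 1^{N})^{2}$ and hence $\partial_{s}\varrho\geq \min|g''(s)|/(\max g'(s))^{2}$, whereas you compute $1^{N}\cdot u(\rho)$ directly from \eqref{darstellunguprime} as a sum of manifestly nonnegative terms, arriving at $\partial_{s}\varrho\geq \varrho^{2}\min_{j}|g_{j}''(s)|$; by Lemma~\ref{characteristics}\eqref{Xtot} these two bounds are equivalent, and your derivation is slightly more elementary.
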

\begin{proof}
Suppose that $\rho = X(s, \, w)$ with $s = \hat{p}(\rho)$ and $w \in S_0$ obeying \eqref{representationw}. Then $\varrho := \sum_{i=1}^N \rho_i = X(s, \, w) \cdot 1^N$, which we regard as an implicit equation for $s$. Use of \eqref{ODE2} yields
\begin{align}\label{zweistern}
\frac{d}{ds}X(s, \, w) \cdot 1^N = \frac{u(X(s, \, w)) \cdot 1^N}{X(s, \, w) \cdot 1^N} \, . 
\end{align}
Since by definition $D^2h(X) \, u(X) = 1^N$, we see that $u(X) \cdot 1^N = D^2h(X) \, u(X) \cdot u(X) > 0$ (the Hessian of $h$ is positive definite). In fact, due to \eqref{HESSCOMP}
\begin{align*}
D^2h(X) \, u(X) \cdot u(X) \geq \frac{(V_{\rho}(s, \, X) \cdot u(X))^2}{|V_p(s, \, X)|} \, ,
\end{align*}
and now the identities $V_{\rho}/|V_p| = \hat{p}_{\rho}(X) = D^2h(X) \, X$ imply that
\begin{align*}
u(X) \cdot 1^N = & D^2h(X) \, u(X) \cdot u(X) \\
\geq & |V_p(s, \, X)| \, (D^2h(X) \, X \cdot u(X)^2\\
= & |V_p(s, \, X)| \, (X \cdot 1^N)^2 \, .
\end{align*}
With the help of \eqref{zweistern}, we attain the estimate $\dot{X}(s, \, w) \cdot 1^N \geq |V_p(s, \, X(s,w))| \, X(s, \, w) \cdot 1^N$. 
We can next bound $|V_p| = \sum_{j=1}^N |g_j^{\prime\prime}(s)| \, X_{j}(s, \, w)$ from below by $\min g^{\prime\prime}(s) \, X(s, \, w) \cdot 1^N$. Thus, Lemma \ref{characteristics}, \eqref{Xtot} yields
\begin{align}
 \label{choule2}  \frac{d}{ds}X(s, \, w) \cdot 1^N \geq \frac{\min g^{\prime\prime}(s)}{(\max g^{\prime}(s))^2} \, .
\end{align}
We define $\mathscr{P}(\varrho, \, w)$ to be the implicit solution to $\varrho = X(\mathscr{P}(\varrho, \, w), \, w) \cdot 1^N$. Then, the derivatives obey
\begin{align*}
\partial_{\varrho}\mathscr{P}(\varrho, \, w) =&  \frac{1}{X_s(\mathscr{P}(\varrho, \, w), \, w) \cdot 1^N}, \quad \diff_{w_i}\mathscr{P}(\varrho, \, w) =  - \frac{\diff_{w_i}X(\mathscr{P}(\varrho, \, w), \, w) \cdot 1^N }{X_s(\mathscr{P}(\varrho, \, w), \, w) \cdot 1^N} \, .
\end{align*}
We invoke \eqref{choule2} to see that $\partial_{\varrho}\mathscr{P} > 0$. Due to \eqref{choule2} and Lemma \ref{characteristics}, \eqref{wderiv}, we first obtain that $|\diff_{w}\mathscr{P}(\varrho, \, w)|$ is bounded above by a function of $s = \hat{p}(\rho) = \mathscr{P}$. Then, we invoke Lemma \ref{characteristics}, relation \eqref{Xtot}, and the fact that the monotonously decreasing functions $g^{\prime}$ are invertible, deducing that
\begin{align*}
 \min_{i=1, \ldots,N} [g^{\prime}_i]^{-1}(\frac{1}{X(s,w)\cdot 1^N}) \leq s \leq \max_{i=1, \ldots,N} [g^{\prime}_i]^{-1}(\frac{1}{X(s,w)\cdot 1^N}) \, .
\end{align*}
Thus, $|\diff_{w}\mathscr{P}(\varrho, \, w)|$ is bounded above by a continuous function of $X \cdot 1^N = \varrho$.
\end{proof}

\section{Analysis of the Maxwell-Stefan equations and the dissipation bound}

For this section we assume that the friction coefficients obey the assumptions (B). We consider the Maxwell-Stefan algebraic system 
\begin{align}\label{DIFFUSFLUX2}
(\sum_{k \neq i} f_{i,k}\, y_k) \, J^i - y_i \, \sum_{k \neq i} f_{i,k} \, J^k = - \rho_i \, \sum_{k = 1}^N  \, (\delta_{i,k} - y_k) \, (\nabla \mu_k - b^k) \, \text{ for } i =1,\ldots,N \, .
\end{align}
We define a matrix $B(\rho) = \{b_{i,k}(\rho)\}_{i,k=1,\ldots,N}$ (cp. \eqref{matrixB}) via
\begin{align}\label{matrixB2}
b_{i,k}(\rho) := \begin{cases}
               - f_{i,k} \, y_i & \text{ for } k \neq i \, ,\\
               \sum_{j \neq i} f_{i,j}\, y_j & \text{ for } k = i \, ,
              \end{cases}
\end{align}
so as to reformulate \eqref{DIFFUSFLUX2} as 
\begin{align}\label{Flux3}
B(\rho) \, J = - R \, P(y) \, (\nabla \mu-b) =: - d \, .
\end{align}
Here we recall that $R := \text{diag}(\rho_1, \ldots, \rho_N)$ and that $P(y) := I - 1^N \otimes y$. In all these equations we have used the natural transformation  $y = \hat{y}(\rho)$ for the mass fractions. In \eqref{DIFFUSFLUX2} and \eqref{matrixB2}, we have by assumption $f_{i,k} = f_{i,k}(\hat{p}(\rho), \, \hat{x}(\rho))$. 

It is well known and it can be easily checked that the matrix $B(\rho)$ is a singular \emph{M}-matrix with left kernel $\{1^N\}$ and right kernel $\{y\}$ (see among others \cite{giovan}, par. 7.7, \cite{bothedruetMS}). The use of generalised inverse to solve the Maxwell-Stefan equations is an original product by the first reference. Nevertheless we quote for convenience from the latter paper, recalling an invertibility result for \eqref{Flux3}.
\begin{lemma}\label{maxstefinverse}
For $\rho \in \mathbb{R}^N_+$, we denote $(B(\rho))^D$ the {\rm Drazin inverse} of the matrix $B(\rho)$ of \eqref{matrixB2}.
Then the unique solution $J \in \{1^N\}^{\perp}$ to the equations \eqref{Flux3} is given as $J := - (B(\rho))^D \, R \, d$.   
\end{lemma}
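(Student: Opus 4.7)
The plan is to reduce the lemma to the standard theory of Drazin inverses of singular $M$-matrices, once the spectral structure of $B(\rho)$ and the compatibility of the right-hand side have been made explicit.

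First I would identify the kernels of $B(\rho)$ by direct inspection of \eqref{matrixB2}. The columns of $B(\rho)$ sum to zero, so $1^N\cdot B(\rho)=0$ places $1^N$ in the left kernel; simultaneously, the $i$-th row of $B(\rho)\,y$ collapses via $\sum_{k\neq i}f_{i,k}(y_k y_i - y_i y_k)=0$, putting $y$ in the right kernel. Hypothesis (B2) together with the off-diagonal sign pattern of \eqref{matrixB2} then identifies $B(\rho)$ as a singular irreducible $M$-matrix, whence the classical Perron-type theory yields $\rk B(\rho)=N-1$, $\ker B(\rho)^T=\mathrm{span}\{1^N\}$ and $\ker B(\rho)=\mathrm{span}\{y\}$.

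The central observation is that $B(\rho)$ has index one. Indeed, $\mathrm{range}\,B(\rho)=(\ker B(\rho)^T)^{\perp}=\{1^N\}^{\perp}$ and $\ker B(\rho)=\mathrm{span}\{y\}$, so the intersection of kernel and range is trivial if and only if $1^N\cdot y\neq 0$, which holds since $\sum_i y_i=1$. Consequently $\mathbb{R}^N=\{1^N\}^{\perp}\oplus\mathrm{span}\{y\}$, the Drazin inverse $B(\rho)^D$ coincides with the group inverse, and $\Pi:=B(\rho)B(\rho)^D=B(\rho)^DB(\rho)$ is the oblique projection onto $\{1^N\}^{\perp}$ along $\mathrm{span}\{y\}$; in particular $\mathrm{range}\,B(\rho)^D=\{1^N\}^{\perp}$.

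It remains to verify compatibility and conclude. A short computation gives $P(y)^T\rho=\rho-\varrho y=0$; hence the right-hand side in \eqref{Flux3} satisfies $1^N\cdot(R\,P(y)(\nabla\mu-b))=(P(y)^T\rho)\cdot(\nabla\mu-b)=0$, so this vector lies in $\{1^N\}^{\perp}=\mathrm{range}\,B(\rho)$ and is fixed by $\Pi$. Applying $-B(\rho)^D$ to \eqref{Flux3} and using $B(\rho)B(\rho)^D=\Pi$ then recovers the announced formula for $J$, while the inclusion $\mathrm{range}\,B(\rho)^D\subset\{1^N\}^{\perp}$ guarantees the required orthogonality. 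Uniqueness is immediate: two solutions in $\{1^N\}^{\perp}$ differ by an element of $\ker B(\rho)=\mathrm{span}\{y\}$, whose only member orthogonal to $1^N$ is the origin. I anticipate no serious obstacle; the one slightly delicate point is the verification that $B(\rho)$ has index one, but once the kernels are identified this reduces to the transparent geometric fact $1^N\cdot y=1$.
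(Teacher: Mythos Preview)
Your argument is correct and complete. The paper does not actually prove this lemma; it quotes it from \cite{giovan}, par.~7.7 and \cite{bothedruetMS}, after remarking that $B(\rho)$ is a singular $M$-matrix with left kernel $\mathrm{span}\{1^N\}$ and right kernel $\mathrm{span}\{y\}$. You supply precisely the self-contained verification that the citation stands in for: identifying the kernels, checking index one via $1^N\cdot y=1\neq 0$, and using the group-inverse projector $\Pi=BB^D$ together with the compatibility $d\in\{1^N\}^{\perp}$ to obtain existence and uniqueness. So there is nothing to compare against---you have written out what the paper delegates to references.

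One remark on the formula itself. Your derivation yields $J=-B(\rho)^D\,d$, not $J=-B(\rho)^D\,R\,d$ as literally printed in the lemma. The printed formula appears to carry a stray $R$: with $d=R\,P(y)(\nabla\mu-b)$ from \eqref{Flux3}, an extra $R$ would produce $R^2$, which is inconsistent with the way the result is used throughout the paper (e.g.\ $M^{\sigma}(\rho)=B^D(\rho)\,R+\sigma\,\mathrm{I}$ in \eqref{Msigma} and $J^{\sigma}=-B^D(\rho^{\sigma})\,R^{\sigma}(\nabla\mu^{\sigma}-b)$ in \eqref{fluxdecompo}). Your formula $J=-B^D d=-B^D R\,P(y)(\nabla\mu-b)$ is the correct one, and it indeed equals $-B^D R(\nabla\mu-b)$ because $B^D\rho=\varrho\,B^D y=0$, so the projector $P(y)$ can be dropped after $B^D R$. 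You might say this explicitly rather than writing ``recovers the announced formula,'' since what you recover is the intended, not the literally stated, expression.
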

We now prove the second essential result for this paper in order to obtain the robust estimates, and introduce first the fundamental function to formulate our asymptotic conditions on the friction coefficients in the Maxwell-Stefan system. For $s \in ]0, \, + \infty[$, $w \in S_0$ we recall the definitions of quotients $F_i(s, \, w) = X_i(s, \, w)/w_i$ (cf. Lemma \ref{characteristics} and \eqref{Ffirst}). Define $\rho := X(s, \, w)$. Making use of the identity \eqref{representationw} for $w$, 
we obtain for $F_i$ an equivalent form as function of the pressure $s = \hat{p}(\rho)$ and the number densities $x = \hat{x}(\rho)$ via
\begin{align}\label{Fnew}
F_i(s, \, w) = \hat{F}_i(s, \, x) := \frac{\sum_{\ell = 1}^N \bar{v}^0_{\ell} \, m_{\ell} \, x_{\ell} \,  \exp(-m_\ell \, (g_{\ell}(p_0)-g_{\ell}(s))) \, \hat{\chi}^{m_{\ell}}(s, \, x)}{(\sum_{\ell = 1}^N g_{\ell}^{\prime}(s) \, m_{\ell} \, x_{\ell} ) \, \exp(-m_i \, (g_i(p_0)-g_i(s))) \,\hat{\chi}^{m_{i}}(s, \, x) }
\end{align}
with $\hat{\chi}(s, \, x)$ defined via solution of $\sum_{j=1}^N \hat{\chi}^{m_j} \, x_j \, \exp(-m_j \, (g_j(p_0)-g_j(s))) = 1$.
For $\rho = X(s, \, w)$ it follows that
\begin{align}\label{meaningF}
 \hat{F}_i(\hat{p}(\rho), \, \hat{x}(\rho)) = \rho_i/w_i = F_i(s, \, w) \, .
\end{align}
Making use of \eqref{Fnew}, \eqref{meaningF} and \eqref{frefbitte}, we can verify that the singular functions $\sigma_{i,k}(s, \, x)$ occurring in the condition (B3) are expressed via
\begin{align}\label{asympfric}
\sigma_{i,k}(s, \, x) := \frac{\hat{F}_i(s, \,x) \, \hat{F}_k(s, \, x)}{\sum_{\ell = 1}^N \hat{F}_{\ell}(s, \, x) \, m_{\ell} \, x_{\ell}} \, \sum_{\ell = 1}^N m_{\ell} \, x_{\ell}  \, .
\end{align}
Making use of \eqref{meaningF}, we have at $\rho = X(s, \, w)$ the equivalent expression
\begin{align}\label{meaningsmallg}
\sigma_{i,k}(s, \, \hat{x}(X(s, \, w))) = \frac{F_i(s, \, w) \, F_k(s, \,w)}{\sum_{\ell = 1}^N F_{\ell}(s, \, w) \, \hat{y}_{\ell}(X(s, \, w)) } \, .
\end{align}
We now prove the robust estimates.
\begin{lemma}\label{ROBUSTLEM}
 Assume that the functions $g_1, \ldots, g_N: ]0, \, + \infty[ \rightarrow \mathbb{R}$ satisfy the assumptions {\rm (A)}, and that the friction coefficients $f_{i,k}: \, ]0, \, + \infty[ \times S^1_+ \rightarrow \mathbb{R}_+$ ($1\leq i < k \leq N$) obey {\rm (B)} with some $p_1 > 0$.
Then, for $\rho \in \mathbb{R}^N_+$ represented as $\rho = X(s, \, w)$ with $s = \hat{p}(\rho) \leq p_1$ and $w \in S_0$, the following two claims are valid:
\begin{enumerate}[(1)]
 \item \label{LEMRES1} The inverse Maxwell-Stefan matrix satisfies $(B(\rho))^D \, R \geq \frac{1}{f_1} \, P^{\sf T}(\hat{y}(\rho)) \, W \, P^{\sf T}(\hat{y}(\rho))$, where $W = \text{diag}(w_1, \ldots, w_N)$;
\item \label{LEMRES2} The solution $J$ to \eqref{Flux3} obeys $|J|^2 \leq \frac{2\, N \, |\bar{v}^0|_{\infty}}{f_0} \, (-J \, : \,  (\nabla \mu-b))$.
\end{enumerate}
\end{lemma}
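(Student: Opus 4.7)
Plan. Both assertions flow from the classical Maxwell--Stefan dissipation identity. Dotting the $i$-th line of \eqref{DIFFUSFLUX2} with $J^i/\rho_i$, summing over $i$, and using the symmetry $f_{ik}=f_{ki}$ together with $\sum_i J^i = 0$ (so that $P^{\sf T}J = J$ and the $y$-component of $\nabla\mu-b$ drops out of $-J:(\nabla\mu-b)$), I first derive the standard representation
\begin{equation*}
-J:(\nabla\mu - b) \;=\; \frac{1}{\varrho}\sum_{i<k}\frac{f_{ik}}{\rho_i\rho_k}\bigl|\rho_k J^i - \rho_i J^k\bigr|^2.
\end{equation*}
The reparametrisation $\rho_i = w_i F_i$ from \eqref{representationX}--\eqref{meaningF}, combined with the explicit formula \eqref{asympfric} and the identities $\sum_\ell m_\ell x_\ell = \varrho/n$, $\sum_\ell F_\ell y_\ell = S/\varrho$ with $S := \sum_\ell \rho_\ell^2/w_\ell$ and $n := \sum_\ell \rho_\ell/m_\ell$, reduces the reference coefficient to $\sigma_{ik}/(\rho_i\rho_k) = \varrho^2/(n S w_i w_k)$.

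For claim (1), I feed the upper bound $f_{ik}\leq f_1 \sigma_{ik}$ from (B3) into the polarised dissipation identity. By Lemma \ref{maxstefinverse}, this translates into a matrix inequality for $(B(\rho))^{D}R$ on $\{1^N\}^\perp$: the form is bounded below by the same form built with $\sigma_{ik}$ and scaled by $1/f_1$. The algebraic simplification above then shows that this baseline form coincides exactly with $\langle\xi, P^{\sf T}W P\xi\rangle = \sum_i w_i (P\xi)_i^2$, delivering the claimed matrix lower bound.

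For claim (2), I apply the opposite bound $f_{ik}\geq f_0\sigma_{ik}$ and exploit the Lagrange-type identity
\begin{equation*}
\sum_{i<k}\frac{|\rho_k J^i - \rho_i J^k|^2}{w_i w_k} \;=\; S\,T \;-\; \Bigl|\sum_i F_i J^i\Bigr|^2, \qquad T:=\sum_i\frac{|J^i|^2}{w_i},
\end{equation*}
obtained from the classical identity $(\sum a_i^2)(\sum b_i^2) - (\sum a_i b_i)^2 = \sum_{i<k}(a_i b_k - a_k b_i)^2$ applied to $a_i = F_i\sqrt{w_i}$, $b_i = J^i/\sqrt{w_i}$. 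Since $\sum_i J^i = 0$, the vector $\sum_i F_i J^i$ is unchanged if $F_i$ is replaced by $F_i - c$; a weighted Cauchy--Schwarz estimate optimised at $c=\varrho/\sum_i w_i$ then yields $|\sum_i F_i J^i|^2 \leq (S - \varrho^2/\sum_i w_i)T$, whence
\begin{equation*}
\sum_{i<k}\frac{|\rho_k J^i - \rho_i J^k|^2}{w_i w_k} \;\geq\; \frac{\varrho^2}{\sum_i w_i}\,T.
\end{equation*}
Substituting back into the dissipation produces $-J:(\nabla\mu - b) \geq f_0\varrho^3 T/(nS\sum_i w_i)$. The universal constant $2N|\bar{v}^0|_\infty/f_0$ is then extracted by combining $T \geq |J|^2\min_i \bar{v}^0_i$ (from $1/w_i \geq \bar{v}^0_i$), $\sum_i w_i \leq N/\min\bar{v}^0$, $n \leq \varrho/\min m$, and the Cauchy--Schwarz inequality $S\sum_i w_i \geq \varrho^2$, all of which rest on $w\in S_0$.

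I expect the main obstacle to be the sharp extraction of the universal constant $2N|\bar{v}^0|_\infty$ in (2). The combination $\varrho^3/(nS\sum_i w_i)$ has no straightforward uniform lower bound when considered on its own -- it can degrade whenever the $F_i$ are sharply spread -- so one must apply the weighted Cauchy--Schwarz arising from $\sum_i J^i = 0$ simultaneously with the normalisation $w\cdot \bar{v}^0 = 1$, instead of separating pointwise bounds on $w_i$ and $1/w_i$ from the estimates on $n$, $S$ and $\sum_i w_i$.
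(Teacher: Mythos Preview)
Your approach to claim~(1) is essentially correct and in one respect cleaner than the paper's. The key algebraic observation that the reference Maxwell--Stefan mobility $\tilde B^D R$ (built with $\sigma_{ik}$ in place of $f_{ik}$) equals $P^{\sf T}WP$ exactly is true: one checks $\tilde B\,(P^{\sf T}WP)=RP=P^{\sf T}R=\tilde B\,\tilde B^D R$ and both sides map into $\{1^N\}^\perp$. Equivalently, the conjugated matrix $\tilde K:=W^{1/2}R^{-1}\tilde B\,W^{1/2}$ is the orthogonal projection $I-\hat a\hat a^{\sf T}$ with $\hat a_i=\rho_i/\sqrt{w_iS}$. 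This immediately gives $K:=W^{1/2}R^{-1}B\,W^{1/2}\le f_1\tilde K\le f_1 I$ as forms, i.e.\ $r(K)\le f_1$ --- which actually repairs a weak point in the paper, where the passage from the entrywise bound $\max_{i,j}|K_{ij}|\le f_1$ to $r(K)\le f_1$ is not justified as stated. What you still need to supply is the passage from this form inequality to the inequality $B^DR\ge\tfrac{1}{f_1}P^{\sf T}WP$; ``by Lemma~\ref{maxstefinverse}'' is not enough. The paper does this via the regularised $B_\alpha=B+\alpha\,y\otimes 1^N$, inverting, conjugating by $P^{\sf T},P$, and then sending $\alpha\to 0$. (A minor slip: $\sigma_{ik}/(\rho_i\rho_k)=\varrho/(Sw_iw_k)$, not $\varrho^2/(nSw_iw_k)$; the extra factor $\varrho/n$ is bounded between $\min m$ and $\max m$, so nothing structural changes.)

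For claim~(2) your Lagrange identity reaches exactly the paper's intermediate object:
\[
\zeta\;\ge\;\frac{f_0}{S}\Bigl(ST-\bigl|\textstyle\sum_iF_iJ^i\bigr|^2\Bigr)\;=\;f_0\sum_i\frac{|\tilde J^i|^2}{w_i},\qquad \tilde J:=J-\frac{J\cdot F}{y\cdot F}\,y,
\]
and in fact without the factor $\tfrac12$ that the paper's Jensen step costs. The gap is in your next move. Weakening via $|\sum_iF_iJ^i|^2\le(S-\varrho^2/\bar w)\,T$ gives $\zeta\ge f_0\varrho^2T/(S\bar w)$, and you then need a universal lower bound on $\varrho^2/(S\bar w)$. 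There is none: writing $a_i=\rho_i/\sqrt{w_i}$, $b_i=\sqrt{w_i}$, one has $S\bar w/\varrho^2=|a|^2|b|^2/(a\cdot b)^2$, which blows up whenever the $F_i=\rho_i/w_i$ spread apart --- a regime not excluded by $s\le p_1$ (as $s\to 0$ the ratios $\bar F_k(s)/\underline F_j(s)$ are unbounded). You anticipate this difficulty in your last paragraph, but the remedy you describe is not concrete. The paper avoids the bad factor altogether: from $\sum_i|\tilde J^i|^2/w_i$ it bounds $|\tilde J_\ell|_1^2\le\bar w\sum_i|\tilde J^i_\ell|^2/w_i$ by H\"older, and then recovers $J$ from $\tilde J$ via $J=\tilde J-(\tilde J\cdot 1^N)\,y$ (which follows from $\sum_iJ^i=0$), so that $|J|$ is controlled by $|\tilde J|_1$. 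Replacing your final step by this route closes the argument.
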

\begin{proof}
\eqref{LEMRES1}: For $\rho \in \mathbb{R}^N_+$, the matrix $\tau := B(\rho) \, R$ is clearly symmetric. Moreover, one can directly compute that $\tau \, 1^N = 0$. For $z \in \mathbb{R}^N$ arbitrary, one has
\begin{align}\label{tauprod}
\sum_{i,k = 1}^N \tau_{i,k} \, z_i \, z_k = \frac{\varrho}{2} \, \sum_{i\neq k} f_{i,k} \, y_i \, y_k \, (z_i - z_k)^2 \, .
\end{align}
Since the $f_{i,k}$ are strictly positive, \eqref{tauprod} shows that $\tau = \tau(\rho)$ is positive semi-definite on all positive states $\rho \in \mathbb{R}^N_+$. Thus, the matrix $R^{-1} \, B(\rho) = R^{-1} \, \tau(\rho) \, R^{-1}$ is likewise symmetric, positive semi-definite and possesses $\{\rho\}$ as its kernel.

For $\alpha > 0$, we introduce the matrix $B_{\alpha}(\rho) := B(\rho) + \alpha \, y \otimes 1^N$ which is invertible and inverse positive for small $\alpha >0$ (see \cite{bothedruetMS}). Then $R^{-1} \, B_{\alpha} = R^{-1} \, B + \frac{\alpha}{\varrho} \, 1^N\otimes 1^N$ is symmetric and strictly positive definite.

We next consider $\rho = X(s, \, w)$ where $X$ is the solution map for \eqref{ODE2}, $s = \hat{p}(\rho)$ and $w \in S^0$. We denote $W = \text{diag}(w_1, \ldots, w_N)$, and consider a matrix $K := W^{\frac{1}{2}} \, R^{-1} \, B_{\alpha} \, W^{\frac{1}{2}}$ which is positive definite, symmetric, and possesses the entries
\begin{align*}
 K_{i,j} = \begin{cases}
            (-f_{i,j}+\alpha) \, \frac{\sqrt{w_i \, w_j}}{\varrho} & \text{ for } i \neq j\\
            \frac{w_i}{\varrho} \, (\frac{1}{\rho_i} \, \sum_{k\neq i} f_{i,k} \, \rho_k + \alpha) & \text{ for } i = j  
           \end{cases} \, .
\end{align*}
Recall that $\rho_i = X_i(s, \, w)$ implies that $\rho_i/w_i = F_i(s, \, w) = F_i$ with $F$ defined in \eqref{Ffirst}. We now make use of the structural assumption (B3). For $s \in ]0, \, p_1[$ and $x \in S^1_+$, we might express $f_{i,j}(s, \, x) = f^{\text{reg}}_{i,j}(s, \, x) \, \sigma_{i,j}(s, \, x)$, with $f^{\text{reg}}$ uniformly bounded from below and above by positive constants $f_0 < f_1$, and the $\sigma_{i,j}(s, \, x)$ are by definition (cp. \eqref{asympfric}, \eqref{meaningsmallg})
\begin{align*}
\sigma_{i,j}(s, \, x) = \frac{F_i(s, \, w) \, F_j(s, \, w)}{F(s, \, w) \cdot \hat{y}(X(s, \, w))} \, .
\end{align*}
For $i \neq j$, we use $F_i \, \sqrt{w_i} = \sqrt{F_i \, \rho_i}$ and $F \cdot \hat{y} \, \varrho = F \cdot \rho$, and we therefore obtain that
\begin{align*}
 K_{i,j} = & (-f_{i,j}+\alpha) \, \frac{\sqrt{w_i \, w_j}}{\varrho} \\
 = & -f^{\text{reg}}_{i,j} \, \frac{F_i \, F_j}{F \cdot \hat{y}} \, \frac{\sqrt{w_i \, w_j}}{\varrho} +\alpha \, \frac{\sqrt{w_i \, w_j}}{\varrho}\\
 = & -f^{\text{reg}}_{i,j} \, \frac{\sqrt{F_i \, \rho_i} \, \sqrt{F_j \, \rho_j}}{F \cdot \rho} \, +\alpha \, \frac{\sqrt{w_i \, w_j}}{\varrho} \,.
 \end{align*}
Hence $|K_{i,j}| \leq f_1  + \alpha \, \frac{|w|_{\infty}}{\varrho}$. For $j = i$, we use (B3) again and we have
\begin{align*}
0 \leq  K_{i,i} = & \frac{1}{F_i} \, \sum_{k\neq i} f_{i,k} \, y_k + \alpha \, \frac{w_i}{\varrho} \\
  =  & \sum_{k\neq i} f^{\text{reg}}_{i,k} \,\frac{F_k \, y_k}{F \cdot y} + \alpha \, \frac{w_i}{\varrho}\\
  \leq & f_1 + \alpha \, \frac{|w|_{\infty}}{\varrho} \, .
\end{align*}
We therefore can bound $\max_{i,j} |K_{i,j}| \leq f_1 + \alpha \, |w|_{\infty}/\varrho$.

Thus the spectral radius $r(K)$ is bounded by the same quantity, and
\begin{align*}
 \lambda_{\min}(K^{-1}) \geq \frac{1}{f_1 + \alpha \, |w|_{\infty}/\varrho} =: d(\alpha) \, .
\end{align*}
In other words, the matrix $K^{-1} - d(\alpha) \, I$ is positive semi-definite, which means that
\begin{align*}
 W^{-\frac{1}{2}} \, B_{\alpha}^{-1} \, R \, W^{-\frac{1}{2}} - d(\alpha) \, I \geq 0 \, .
\end{align*}
We multiply from the left with the matrix $P^{\sf T} \, W^{\frac{1}{2}}$ and from the right with $W^{\frac{1}{2}} \, P$, preserving the inequality, and thus $P^{\sf T} \, B_{\alpha}^{-1} \, R \, P \geq d(\alpha) \, P^{\sf T} \, W \, P$. Recall that $B_{\alpha}^{-1} = B^D + \frac{1}{\alpha} \, y \otimes 1^N$ and $P^{\sf T} \, y = 0$, to see that $P^{\sf T} \, B^D \, R \, P \geq d(\alpha) \, P^{\sf T} \, W \, P$. Finally, $B^D \, R \, P = B^D \, R$ and $P^{\sf T} \, B^D = B^D$ imply that $B^D \, R \geq d(\alpha) \, P^{\sf T} \, W \, P$.

Thus, we can let $\alpha \rightarrow 0$, obtaining the claim \eqref{LEMRES1}.

We next prove \eqref{LEMRES2}. Due to the fact that $B(\rho) \, \hat{y}(\rho) = 0$, we can restate \eqref{Flux3} in the form  $B(\rho) \, \tilde{J} = - d$ with $\tilde{J} := J - \frac{J \cdot F}{y \cdot F} \, y$. Notice that $\tilde{J}$ is orthogonal to the vector $F = F(s, \, w)$ related to $\rho$ via \eqref{Ffirst} whenever $\rho = X(s, \, w)$. 

With $\tau := B(\rho) \, R$, we then have
\begin{align}\label{Flux3pr}
\tau \, R^{-1}\tilde{J} = - R \, P(y) \, (\nabla \mu - b) \, .
\end{align}
For fixed $\ell \in \{1,2,3\}$, consider the row for $\tilde{J}_{\ell} \in \mathbb{R}^N$ in the latter system. We multiply in \eqref{Flux3pr} from the right with $R^{-1} \tilde{J}_{\ell}$. Recalling that $P(y)^{\sf T} \, y = 0$, note that
\begin{align*}
- R \, P(y) \, (\partial_{x_{\ell}} \mu - b_{\ell}) \cdot R^{-1} \tilde{J}_{\ell} = -(\partial_{x_{\ell}} \mu - b_{\ell}) \cdot P(y)^{\sf T} \tilde{J}_{\ell} = -(\partial_{x_{\ell}} \mu - b_{\ell}) \cdot J_{\ell} \, .
\end{align*}
Thus, invoking \eqref{tauprod} and (B3), 
\begin{align*}
 \zeta = & - \sum_{\ell=1}^3 (\partial_{x_{\ell}} \mu - b_{\ell}) \cdot \tilde{J}_{\ell} = \sum_{\ell=1}^3 \tau \, R^{-1}\tilde{J}_{\ell} \cdot R^{-1}\tilde{J}_{\ell} \\
 = & \frac{\varrho}{2} \, \sum_{\ell=1}^3\sum_{i\neq k} f_{i,k} \, y_i \, y_k \, (\frac{\tilde{J}^i_{\ell}}{\rho_i} - \frac{\tilde{J}^k_{\ell}}{\rho_k})^{2}\\
 = & \frac{\varrho}{2} \, \sum_{\ell=1}^3\sum_{i\neq k} f^{\text{reg}}_{i,k} \, F_i \, y_i \, \frac{F_k \, y_k}{F \cdot y} \, (\frac{\tilde{J}^i_{\ell}}{\rho_i} - \frac{\tilde{J}^k_{\ell}}{\rho_k})^{2} \, .
\end{align*}
The numbers $\frac{F_k \, y_k}{F \cdot y}$ are positive and they sum up to one. Using convexity and the fact that $F \cdot \tilde{J} = 0$ by construction, we thus obtain that
\begin{align}\label{zwischenstep}
 \zeta & \geq  f_0 \, \frac{\varrho}{2} \, \sum_{\ell=1}^3\sum_{i,k = 1}^N  \, F_i \, y_i \, \frac{F_k \, y_k}{F \cdot y} \, ( \frac{\tilde{J}^i_{\ell}}{\rho_i} - \frac{\tilde{J}^k_{\ell}}{\rho_k})^{2} \nonumber \\
& \stackrel{\text{convex}}{\geq} f_0 \, \frac{\varrho}{2} \, \sum_{\ell=1}^3\sum_{i=1}^N  \, F_i \, y_i \, \big(\sum_{k = 1}^N\frac{F_k \, y_k}{F \cdot y} \, (\frac{\tilde{J}^i_{\ell}}{\rho_i} - \frac{\tilde{J}^k_{\ell}}{\rho_k}  ) \big)^2\nonumber\\
 & = f_0 \, \frac{\varrho}{2} \, \sum_{\ell=1}^3\sum_{i=1}^N  \, F_i \, y_i \, \big(\frac{\tilde{J}^i_{\ell}}{\rho_i}\big)^2 =  \frac{ f_0}{2} \, \sum_{\ell=1}^3\sum_{i=1}^N  \, \frac{1}{w_i} \, \big(\tilde{J}^i_{\ell}\big)^2 \, .
\end{align}
Due to H\"older's inequality, $|\tilde{J}_{\ell}|_1 \leq |w|_1^{1/2} \, |\frac{\tilde{J}_{\ell}}{\sqrt{w}}|_2$. Using the uniform bound for $w$, we get
\begin{align*}
\sum_{\ell = 1}^3 |\tilde{J}_{\ell}|^2_1 \leq \frac{2\, N \, |\bar{v}^0|_{\infty}}{f_0} \, \zeta \, . 
\end{align*}
It remains to recall that $\tilde{J} = J - \frac{J \cdot F}{y \cdot F} \, y$. Since $J \cdot 1^N = 0$, we must have $J = \tilde{J} - y \, \tilde{J} \cdot 1^N$, so that $J$ satisfies the bound claimed in \eqref{LEMRES2}.
\end{proof}
Lemma \ref{ROBUSTLEM} provides the needed estimates at small pressure. Of course, for normal pressure, we have also the expected behaviour, as stated in the next remark.
\begin{remark} \label{ROBUSTREM}
For $\rho = X(s, \, w)$ with $s = \hat{p}(\rho) > p_1$ and $w \in S_0$, the assumption (B4) for the Maxwell-Stefan coefficients yield $\min_{i\neq k} f_{i,k}(\rho) \geq f_2 > 0$ and $\max_{i\neq k} f_{i,k} \leq f_3$. Under these conditions we obtain instead of the estimates \eqref{LEMRES1}, \eqref{LEMRES2} of Lemma \ref{ROBUSTLEM} the normal Maxwell-Stefan bounds, and we deduce that 
 \begin{enumerate}[(1)]
 \addtocounter{enumi}{+2}
 \item \label{LEMRES1pr} The inverse Maxwell-Stefan matrix satisfies $(B(\rho))^D \, R \geq \frac{c}{f_3} \, P^{\sf T}(\hat{y}(\rho)) \, W \, P^{\sf T}(\hat{y}(\rho))$ where $W = \text{diag}(w_1, \ldots, w_N)$;
\item \label{LEMRES2pr} The solution $J$ to \eqref{Flux3} obeys $\sum_{i=1}^N |J^i|^2 \leq \frac{2}{f_2} \, |\rho| \,  (-J \, : \,  (\nabla \mu-b))$.
\end{enumerate}
\end{remark}
\begin{proof}
 In order to prove \eqref{LEMRES1pr}, we replace the matrix $K$ in the proof of Lemma \ref{ROBUSTLEM} with $\tilde{K} := R^{\frac{1}{2}} \, R^{-1} \, B_{\alpha} \, R^{\frac{1}{2}}$, and show for the spectral radius that $r(\tilde{K}) \leq f_3 + \alpha/\varrho$. Arguing analogously, we obtain first that
  \begin{align*}
P^{\sf T} \, B_{\alpha}^{-1} \, R \, P \geq d(\alpha) \, P^{\sf T} \, R \, P \, \quad  d(\alpha) := \frac{1}{f_3 + \alpha/\varrho} \, ,
\end{align*}
 and letting $\alpha \rightarrow 0$, it follows that $B^D \, R \geq \frac{1}{f_3} \, P^{\sf T} \, R \, P$. Recall that $\rho_i = F_i(s, \, w) \, w_i$. Thus $R = \text{diag}(F_1(s, \, w), \ldots,F_N(s, \, w)) \, W$. Since $F_i(s, \, w) \geq \underline{F}_i(s)$ according to Lemma \ref{characteristics}, \eqref{Quotients}, we obtain for the range $s = \hat{p}(\rho) \geq p_1$ that $R \geq \inf_{i = 1,\ldots,N, \, s \geq p_1} \underline{F}_i(s) \, W$, proving \eqref{LEMRES1pr}.
 
 In order to prove \eqref{LEMRES2pr}, we consider $\tau := B(\rho) \, R$. Hence, as in \eqref{Flux3pr}, $\tau \, R^{-1} J  \cdot R^{-1} J = - J \,:\, (\nabla \mu - b)$. The left-hand is nothing else but
\begin{align*}
 \tau \, R^{-1} J  \cdot R^{-1} J = &  \frac{\varrho}{2} \, \sum_{\ell=1}^3\sum_{i\neq k} f_{i,k} \, y_i \, y_k \, (\frac{J^i_{\ell}}{\rho_i} - \frac{J^k_{\ell}}{\rho_k})^{2}\\
 \geq & f_2 \, \frac{\varrho}{2} \, \sum_{\ell=1}^3\sum_{i = 1}^N y_i \, (\frac{J^i_{\ell}}{\rho_i})^2 = \frac{f_2}{2} \, \sum_{\ell=1}^3\sum_{i = 1}^N \frac{1}{\rho_i} \, (J^i_{\ell})^2 \, . 
\end{align*}
Thanks to H\"older's inequality, we then find that
\begin{align*}
 |J|^2 \leq |\rho| \, \sum_{\ell=1}^3\sum_{i = 1}^N \frac{1}{\rho_i} \, (J^i_{\ell})^2 \leq \frac{2}{f_2} \, |\rho| \, (- J \, : \, (\nabla \mu - b)) \, .
 \end{align*}
\end{proof}



\section{Proof of the main result}

Due to several prior investigations, it is not necessary to proof the result of Th. \ref{MAIN} in every detail. We shall present the general sketch putting some additional emphasis only on the main steps and the new ideas.

\paragraph{Approximation.} We construct approximate solutions using entropic variables and a stabilisation of the diffusion fluxes. For $\sigma > 0$, consider a regularised kinetic matrix
\begin{align}\label{Msigma}
M^{\sigma}(\rho) := B^D(\rho) \, R + \sigma \, {\rm I} \, ,
\end{align}
where $B^D(\rho) \, R$, $R := \text{diag}(\rho)$ is the matrix resulting from inversion of the Maxwell-Stefan matrix (cf. Lemma \ref{maxstefinverse}). 

We further denote $h$ the free energy function \eqref{FE}. As shown in \cite{dredrugagu17a}, this function is of Legendre type on $\mathbb{R}^N_+$ and the convex conjugate $h^*$ is of Legendre type on $\mathbb{R}^N$. Our approximation scheme consists in solving the system for variables $\mu^{\sigma} = (\mu_1^{\sigma},\ldots,\mu_N^{\sigma})$ and $v^{\sigma} = (v_1^{\sigma},v_2^{\sigma},v_3^{\sigma})$ the PDEs
\begin{align}
 \label{masssigma}\partial_t \rho^{\sigma}_i + \divv( \rho^{\sigma}_i \, v^{\sigma} -\sum_{j=1}^N M^{\sigma}_{i,j}(\rho^{\sigma}) \, (\nabla \mu^{\sigma}_j - b^j)) & = 0 \qquad \text{ for } i = 1,\ldots,N\\
\label{momentumsigma}\partial_t (\varrho^{\sigma} \, v^{\sigma}) + \divv( \varrho^{\sigma} \, v^{\sigma}\otimes v^{\sigma} - \mathbb{S}(\nabla v^{\sigma})) + \nabla p^{\sigma} & = \sum_{i=1}^N \rho^{\sigma}_i \, b^i + \divv (\sum_{i=1}^N \jmath^{\sigma,i} \,  v)   \, .
\end{align}
subject to the identities 
\begin{align}\label{auxiliaries}
\rho^{\sigma}_i := \partial_{\mu_i} h^*(\mu^{\sigma}), \quad p^{\sigma} := h^*(\mu^{\sigma}), \quad \jmath^{\sigma} := - M^{\sigma}(\rho^{\sigma}) \, (\nabla \mu^{\sigma} - b) \, .
\end{align}
We consider the initial conditions \eqref{initial0v} for $v$, and for $\mu$
\begin{alignat}{2}\label{initialsigmarho}
\mu_i^{\sigma}(x,\, 0) & = \partial_{\rho_i}h(\rho^0(x)) & & \text{ for } x \in \Omega, \, i = 1,\ldots, N \, ,
\end{alignat}
with $\rho^0$ from \eqref{initial0rho}. On $S_T$ we assume \eqref{lateral0v} and $\nu \cdot \jmath^{\sigma} = 0$. 

Due to the strict convexity of $h^*$, the approximation scheme \eqref{masssigma}, \eqref{momentumsigma} constitutes a nonlinear parabolic system for $(\mu, \, v)$.
There are no algebraic constraints on the unknowns. Existence for the regularised problem can be proved as in \cite{dredrugagu16}, \cite{dredrugagu17b} by means of a Galerkin approximation scheme. The weak solution possesses the regularity
\begin{align}
\mu^{\sigma} \in W^{1,0}_2(Q_T; \, \mathbb{R}^N), \, v^{\sigma} \in W^{1,0}_2(Q_T; \, \mathbb{R}^3) \, ,
\end{align}
and it satisfies for all $t \leq T$ the energy dissipation inequality
\begin{align}\label{DISS}
\int_{\Omega \times \{t\}} \{h(\rho^{\sigma}) + \frac{\varrho^{\sigma}}{2} \, |v^{\sigma}|^2\} \, dx + \int_{Q_t} \{M^{\sigma}(\rho^{\sigma}) \, (\nabla \mu^{\sigma} - b)  :  (\nabla \mu^{\sigma} - b) + \mathbb{S}(\nabla v^{\sigma}) : \nabla v^{\sigma}\} \, dxd\tau \nonumber \\
\leq \int_{\Omega} \{h(\rho^0) + \frac{\varrho_0}{2} \, |v^0(x)|^2\} \, dx - \int_{Q_t} M^{\sigma}(\rho^{\sigma}) \, (\nabla \mu^{\sigma} - b) :   b \, dxd\tau + \int_{Q_t} \rho^{\sigma} \, b\cdot v^{\sigma}  \, dxd\tau \, .
\end{align}
After estimating the right-hand side by means of H\"older's and Young's inequalities, we attain the inequality
\begin{align}\label{DISS2}
\int_{\Omega\times\{t\}} \{h(\rho^{\sigma}) + \frac{\varrho^{\sigma}}{2} \, |v^{\sigma}|^2\} \, dx + \int_{Q_t} \{\frac{1}{2} \, M^{\sigma}(\rho^{\sigma}) \, (\nabla \mu^{\sigma} - b) \, : \,  (\nabla \mu^{\sigma} - b) + \mathbb{S} \, : \, \nabla v^{\sigma}\} \, dxd\tau \nonumber \\
\leq E_0 + \frac{\|b\|_{L^{\infty}(Q_T)}^2}{2} \int_{Q_t} \{|M^{\sigma}(\rho^{\sigma})| + |\rho^{\sigma}|\} \, dxd\tau + \int_{Q_t} \frac{\varrho^{\sigma}}{2} \, |v^{\sigma}|^2 \, dxd\tau \, .
\end{align}
Recall that $M^{\sigma} = B^D \, R + \sigma \, I$ with $B^D \, R$ resulting from inversion of the Maxwell-Stefan equations.
The Remark \ref{ROBUSTREM}, \eqref{LEMRES2pr} shows that $|B^D(\rho) \, R| \leq  C \, |\rho|$ if $\hat{p}(\rho )$ is sufficiently large.
Thus, we can obtain the linear-growth estimate $|M^{\sigma}(\rho)| \leq |B^D(\rho) \, R| + \sigma \leq C \, (1+|\rho|)$.
Showing next that $h$ growth at least linearly, we can apply the Gronwall Lemma to \eqref{DISS2}, and we see that the left-hand is uniformly bounded.
\begin{lemma}\label{growthofh}
Consider the function $h$ of \eqref{FE}. Assume that the functions $g_i$ therein satisfy {\rm (A5)}, and define $\alpha := \max\{\alpha_1, \ldots, \alpha_N\} > 1$. Then there are $c_0 > 0$ and $c_1 > 0$ such that $h(\rho) \geq c_0 \, |\rho|^{\frac{\alpha}{\alpha-1}} - c_1$.
\end{lemma}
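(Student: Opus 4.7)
The plan is to start from the Euler--Gibbs--Duhem relation \eqref{GIBBSDUHEMEULER} and reduce the estimate to a lower bound on the pressure. Using \eqref{IDEALCHEMPOT} at $p=\hat p(\rho)$ one obtains
\begin{align*}
 h(\rho)= \sum_{i=1}^N \rho_i\, g_i(\hat p(\rho))+\sum_{i=1}^N \frac{\rho_i}{m_i}\,\ln\hat x_i(\rho)- \hat p(\rho).
\end{align*}
The entropic middle term equals $\hat n(\rho)\sum_i \hat x_i\ln \hat x_i\ge -\hat n(\rho)\ln N\ge -C\,|\rho|$, by the standard Gibbs bound on $S^1_+$. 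So it suffices to prove a growth bound for $\sum_i\rho_i g_i(\hat p(\rho))-\hat p(\rho)$. Assume first that $\hat p(\rho)\ge M_1$. By the right half of (A5), $g_i(p)\ge \beta p g_i'(p)$, hence using \eqref{PRESSURESTATE} we get
\begin{align*}
 \sum_{i=1}^N \rho_i\, g_i(\hat p(\rho))\ge \beta\,\hat p(\rho)\sum_{i=1}^N \rho_i\, g_i'(\hat p(\rho))=\beta\,\hat p(\rho),
\end{align*}
and therefore $h(\rho)\ge (\beta-1)\hat p(\rho)-C\,|\rho|$ on the set $\{\hat p(\rho)\ge M_1\}$.

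Next I would convert this into a genuine power bound by relating $\hat p$ and $|\rho|$. The left half of (A5), $\alpha_i p g_i'(p)\ge g_i(p)$, rewrites as $(\ln g_i)'(p)\ge 1/(\alpha_i p)$ and integrates to $g_i(p)\ge g_i(M_1)(p/M_1)^{1/\alpha_i}$. Feeding this back into $g_i'\ge g_i/(\alpha_i p)$ yields $g_i'(p)\ge c\, p^{1/\alpha_i-1}$ for $p\ge M_1$ with a strictly positive constant $c$. Since $p\mapsto p^{1/\alpha_i-1}$ is decreasing in $\alpha_i$ whenever $p\ge 1$ and $\alpha=\max_i\alpha_i$, we conclude
\begin{align*}
 1=\sum_{i=1}^N \rho_i\, g_i'(\hat p(\rho))\ge c\,\hat p(\rho)^{1/\alpha-1}\,|\rho|\qquad \text{on}\ \{\hat p(\rho)\ge \max(1,M_1)\},
\end{align*}
which rearranges as $\hat p(\rho)\ge c'\,|\rho|^{\alpha/(\alpha-1)}$. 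Combining with the preceding display gives $h(\rho)\ge c_0'\,|\rho|^{\alpha/(\alpha-1)}-C\,|\rho|$, and since $\alpha/(\alpha-1)>1$, the linear term is absorbed by lowering the coefficient in front of $|\rho|^{\alpha/(\alpha-1)}$ once $|\rho|$ exceeds a threshold $R_0$.

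Finally I would patch the complementary regime $\hat p(\rho)<\max(1,M_1)$. In that range, (A3) together with $\sum_i \rho_i g_i'(\hat p(\rho))=1$ forces $|\rho|$ to lie in a bounded set, since $g_i'(p)\to+\infty$ as $p\to 0$ would otherwise prevent the sum from equalling $1$. On this bounded set $h$ is continuous and extends continuously up to the boundary of $\mathbb R^N_+$: the entropic term vanishes wherever some $\rho_i\to 0$ (because $t\ln t\to 0$), and the logarithmic behaviour of $g_i$ at $0$ encoded in (A4) together with the bound $|\rho|\le \hat p(\rho)/c_1$ (same identity \eqref{PRESSURESTATE}) shows $\rho_i g_i(\hat p)\to 0$ as $\hat p\to 0$. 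Hence $h$ is uniformly bounded from below in that regime, and enlarging $c_1$ if necessary delivers the global inequality. The only non-routine step is the derivation of $\hat p(\rho)\ge c\,|\rho|^{\alpha/(\alpha-1)}$: the whole subtlety is that it is the \emph{slowest}-growing component $g_i$, i.e.\ the one realising $\alpha=\max_j\alpha_j$, that dictates the correct exponent.
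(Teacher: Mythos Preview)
Your argument is correct and follows essentially the same route as the paper's proof: split $h$ into the volume part $H(\rho)=\sum_i\rho_i g_i(\hat p(\rho))-\hat p(\rho)$ and the entropic part, use (A5) with the constraint \eqref{PRESSURESTATE} to get $H\ge(\beta-1)\hat p$, integrate (A5) to obtain $g_i'(p)\ge c\,p^{1/\alpha-1}$ and hence $\hat p(\rho)\ge c'|\rho|^{\alpha/(\alpha-1)}$, and treat the low-pressure regime separately. You are in fact more careful than the paper on two points: you spell out the ``feed back'' step from $g_i\ge C_i p^{1/\alpha_i}$ to the lower bound on $g_i'$, and you justify why $H$ stays bounded below as $\hat p\to 0$ (the paper simply writes $\sup_{\hat p(\rho)\le M_1}|H(\rho)|$ without comment).
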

\begin{proof}
We define $H(\rho) := \sum_{i=1}^N \rho_i \, g_i(\hat{p}(\rho)) - \hat{p}(\rho)$. Then $h(\rho) = H(\rho) + k(\rho)$ with the function $k$ of \eqref{entropicpart}. This $k$ is non-positive, and it is readily checked that $|k(\rho)| \leq C_k \, |\rho|_1$. We recall the side-condition $\sum_{i=1}^N \rho_i \, g_i^{\prime}(\hat{p}(\rho)) = 1$ (cf. \eqref{PRESSURESTATE}). We pass to the variables $s, \, w$ of section \ref{PARAM}, we invoke the assumption (A5), and we verify for $s > M_1$ that
\begin{align}\label{H1}
H(X(s, \, w)) = & \sum_{i=1}^N X_i(s, \, w) \, g_i(s) - s \geq s \, (\beta \, \sum_{i=1}^N  X_i(s, \, w) \, g^{\prime}_i(s) - 1) = (\beta - 1) \, s \, .
\end{align}
Next, the assumption (A5) also guarantees that $g_i \leq \alpha_i \, g_i^{\prime} \,s $. We integrate over $[M_1, \, s]$ to see that $g_i(s) \geq \frac{g_i(M_1)}{M_1^{1/\alpha_i}} \, s^{\frac{1}{\alpha_i}}$. Thus, there is a positive constant $C$ depending only on $\alpha$ and $M_1$ such that $\min g^{\prime}(s) \geq C \, (1/s)^{1-\frac{1}{\max \alpha}}$. Recalling Lemma \ref{characteristics}, \eqref{Xtot}, we find that $|X(s, \, w)|_1 \leq  (\min g^{\prime}(s))^{-1} \leq k_0^{-1} \, s^{1-\frac{1}{\max \alpha}}$.
Combining this with \eqref{H1}, we find that $H(\rho) \geq c_0 \, |\rho|_1^{\frac{\max \alpha}{\max \alpha - 1}}$ if $\hat{p}(\rho) \geq M_1$. Thus, $$h(\rho) \geq c_0 \,|\rho|_1^{\frac{\max \alpha}{\max \alpha - 1}} - \sup_{\hat{p}(\rho) \leq M_1} |H(\rho)| - C_k \,   |\rho|_1 \, ,$$ and the claim follows.
 \end{proof}
 Notice that, while estimating $\nabla \mu^{\sigma}$ in $L^2(Q_T)$ is a straightforward consequence of \eqref{DISS2} and of the choice of $M^{\sigma} \geq \sigma \, {\rm I}$, the control on $\|\mu^{\sigma}\|_{L^2(Q_T)}$ results from more subtle arguments. Indeed, all contributions of $\mu^{\sigma}$ to the dissipation are due to the gradient. To obtain a bound, we must in particular exploit the fact that $\int_{\Omega} \rho_i^{\sigma}(x, \, t) \, dx$ is a conserved quantity for each $i$. We refer to \cite{dredrugagu17b} for details.
 
From \eqref{DISS} and Lemma \ref{growthofh}, we can motivate the regularity $\rho^{\sigma} \in L^{\gamma,\infty}(Q_T; \, \mathbb{R}^N)$ and $\sqrt{\varrho^{\sigma}} \, v^{\sigma} \in L^{2,\infty}(Q_T; \, \mathbb{R}^3)$. Using \eqref{auxiliaries}, the Gibbs-Duhem equation $\nabla p^{\sigma} = \sum_{i=1}^N \rho_i^{\sigma} \, \nabla \mu_i^{\sigma}$ results naturally. Hence $\|\nabla p^{\sigma}\|_{L^{2\gamma/(2+\gamma),2}(Q_T)} \leq \|\rho^{\sigma}\|_{L^{\gamma,\infty}(Q_T)} \, \|\nabla \mu^{\sigma}\|_{L^2(Q_T)}$. With analogous arguments: $\jmath^{\sigma} \in L^{2\gamma/(2+\gamma),2}(Q_T)$.

\paragraph{Uniform estimates.} Several uniform estimates are standard consequences of the energy inequality \eqref{DISS}, so we will state them without comment. We again refer to \cite{dredrugagu17b} for details. Notice moreover that among these estimates, several are in principle the same as for single component Navier-Stokes equations:
\begin{align*}
\|\rho^{\sigma}\|_{L^{\gamma,\infty}(Q_T; \, \mathbb{R}^N)} + \|\sqrt{\varrho^{\sigma}} \, v^{\sigma}\|_{L^{2,\infty}(Q_T; \, \mathbb{R}^3)} + \|v^{\sigma}\|_{W^{1,0}_2(Q_T; \, \mathbb{R}^3)} \leq C_0 \, ,\\
\|\varrho^{\sigma} \, v\|_{L^{\frac{6\gamma}{6+\gamma},2}(Q_T, \, \mathbb{R}^3)} + \|\varrho^{\sigma} \, v\|_{L^{\frac{2\gamma}{1+\gamma},\infty}(Q_T, \, \mathbb{R}^3)} \leq C_0\, ,\\
\|\varrho^{\sigma} \, |v^{\sigma}|^2\|_{L^{\frac{3\gamma}{3+\gamma},1}(Q_T)} + \|\varrho^{\sigma} \, |v^{\sigma}|^2\|_{L^{\frac{5}{3} -\frac{1}{\gamma}}(Q_T)} \leq C_0 \, , \\
\|\sum_{i=1}^N \jmath^{\sigma,i}\|_{L^2(Q_T; \, \mathbb{R}^3)} + \|\sum_{i=1}^N \jmath^{\sigma,i} \otimes v^{\sigma}\|_{L^{\frac{3}{2},1}(Q_T; \, \mathbb{R}^{3\times 3})} \leq C_0 \, \sqrt{\sigma} \, .
\end{align*}
Restricting for simplicity to the case $\gamma > 3$ as announced, we moreover can uniformly estimate the approximate pressures
\begin{align*}
\|p^{\sigma}\|_{L^{1+\frac{1}{\gamma}}(Q_T)} \leq C_0 \, .
\end{align*}
Only the uniform estimate for $\jmath^{\sigma}$ is missing. Since $\jmath^{\sigma} = - M^{\sigma}(\rho^{\sigma}) \, (\nabla \mu^{\sigma} - b)$, we can decompose 
\begin{align}\label{fluxdecompo}
\jmath^{\sigma} = & - B^D(\rho^{\sigma}) \, R^{\sigma} \, (\nabla \mu^{\sigma}-b) -  \sigma \, (\nabla \mu^{\sigma}-b) =: J^{\sigma} + \tilde{\jmath}^{\sigma} \, ,
\end{align}
where we abbreviated $R^{\sigma} := \text{diag}(\rho^{\sigma}_1, \ldots, \rho^{\sigma}_N)$. By the definition of $B^D$ (see Lemma \ref{maxstefinverse}), the first contribution $J^{\sigma}$ solves the Maxwell-Stefan equations
\begin{align}\label{NULL}
B(\rho^{\sigma}) \, J^{\sigma} = - R^{\sigma} \, P(\hat{y}(\rho^{\sigma})) \, (\nabla \mu^{\sigma}-b) \, .
\end{align}
To obtain estimates, we next decompose the domain $Q_T$ between points for which $p^{\sigma}(x, \, t) \leq p_1$ and for which $p^{\sigma}(x, \, t) > p_1$, where $p_1$ is the constant occurring in the condition (B3).

For $p^{\sigma}(x,t) \leq p_1$, the Lemma \ref{ROBUSTLEM} implies $|J^{\sigma}(x,t)|^2 \leq C \, (-J^{\sigma} : (\nabla \mu^{\sigma}-b))$. Since \eqref{DISS2} implies a bound in $L^1(Q_T)$ for $-J^{\sigma} : (\nabla \mu^{\sigma}-b)$, we thus control the integral of $|J^{\sigma}|^2$ over this set.

If $p^{\sigma}(x,t) \geq p_1$, we obtain the standard Maxwell-Stefan bound (see Remark \ref{ROBUSTREM}), which means that $|J^{\sigma}(x,t)|^2 \leq C \, |\rho^{\sigma}| \,  (-J^{\sigma} : (\nabla \mu^{\sigma}-b))$. We combine both uniform bounds, and they yield
\begin{align*}
\|J^{\sigma}\|_{L^{\frac{2\gamma}{1+\gamma},2}(Q_T; \, \mathbb{R}^{N\times 3})} \leq C_0 \, .
\end{align*}
Passing to variables $\rho^{\sigma} = X(p^{\sigma}, \, w^{\sigma})$ as described in Lemma \ref{characteristics}, we obtain via Lemma \ref{ROBUSTLEM} and Remark \ref{ROBUSTREM} the additional bound
\begin{align}\label{robinsitu}
\|\nabla w^{\sigma}\|_{L^{2}(Q_T; \, \mathbb{R}^{N\times 3})} \leq C_0 \, .
\end{align}
This was demonstrated in the section \ref{MAINIDEAS}. It is readily seen that the second contribution in \eqref{fluxdecompo} obeys $\|\tilde{\jmath}^{\sigma}\|_{L^2(Q_T)} \leq C_0 \, \sqrt{\sigma}$.

\paragraph{Limit passage.} Now we possess enough informations to extract weak limits $(\rho, \, v, \, p, \, J)$ for $\{\rho^{\sigma}\}$, $\{v^{\sigma}\}$, $\{p^{\sigma}\}$ and $\{J^{\sigma}\}$. 
 
We show in the canonical manner that $v^{\sigma} \rightarrow v$ strongly in $L^2(Q_T; \, \mathbb{R}^3)$, that $\varrho^{\sigma} \, v^{\sigma} \rightarrow \varrho \, v$ weakly in $L^1(Q_T; \, \mathbb{R}^3)$ and that $\varrho^{\sigma} \, v^{\sigma} \otimes v^{\sigma} \rightarrow \varrho \, v \otimes v$ weakly in $L^1(Q_T; \, \mathbb{R}^{3\times 3})$. These are well known structural consequences for the Navier-Stokes operator (cp. \cite{lionsfils}).

Passing to variables $\rho^{\sigma} = X(p^{\sigma}, \, w^{\sigma})$, we have $p^{\sigma} = \mathscr{P}(\varrho^{\sigma}, \, w^{\sigma})$, and the Lions method provides the strong convergence of the mass densities $\varrho^{\sigma} \rightarrow \varrho$ in $L^1(Q_T)$ -- as demonstrated in section \ref{PQ}.

Once this point is clear, we can rephrase $\rho^{\sigma} = \mathscr{R}(\varrho^{\sigma}, \, w^{\sigma})$ with a certain mapping $\mathscr{R}$, and we obtain the compactness of the whole vector of mass densities as for instance in \cite{dredrugagu17c}. Due to the bijectivity of the variable change, the auxiliary variables $w^{\sigma}$ also converge strongly, for instance in $L^1(Q_T; \, \mathbb{R}^N)$.
These properties suffice for passing to the limit in the integral relations resulting from the distributional formulations of \eqref{masssigma} and \eqref{momentumsigma}.

We will focus on passing to the limit in the approximate Maxwell-Stefan equations \eqref{NULL}. 

The right-hand side $d^{\sigma} := R^{\sigma} \, P(\hat{y}(\rho^{\sigma})) \, (\nabla \mu^{\sigma}-b)$ of \eqref{NULL} possesses the componentwise expression
\begin{align}\label{NULL1}
d^{\sigma,i} = \rho^{\sigma}_i \, \sum_{j=1}^N (\delta_{i,j} - \hat{y}_j(\rho^{\sigma})) \, (\nabla \mu^{\sigma}_j-b^j) \, .
\end{align}
Recalling \eqref{auxiliaries}, we have $\rho^{\sigma} = \nabla_{\mu} h^*(\mu^{\sigma})$, hence also $\mu^{\sigma} = \nabla_{\rho} h(\rho^{\sigma}) = \hat{\mu}(\hat{p}(\rho^{\sigma}), \, \hat{x}(\rho^{\sigma}))$. 

We pass to the variables $p^{\sigma}, \, w^{\sigma}$ for which $\rho^{\sigma} = X(p^{\sigma}, \, w^{\sigma})$. Then, exploiting \eqref{Proper} once again, we find for $\eta \in \{1^N\}^{\perp}$ arbitrary that
\begin{align*}
 \eta \cdot \mu^{\sigma} = \eta \cdot \nabla_{\rho} h(X(p^{\sigma}, \, w^{\sigma})) = \eta \cdot \nabla_{\rho} h(w^{\sigma}) \, .
\end{align*}
As $\mu^{\sigma} \in W^{1,0}_2(Q_T; \, \mathbb{R}^N)$, computing the spatial gradients in the latter identity is possible. It follows that
$ \eta \cdot \nabla \mu^{\sigma} = \eta \cdot D^2h(w^{\sigma}) \,  \nabla w^{\sigma}$. Thus, the $d^{\sigma}$ of \eqref{NULL1} possess the following equivalent representation:
\begin{align}\label{NULL2}
d^{\sigma,i} = \rho^{\sigma}_i \, \sum_{j=1}^N (\delta_{i,j} - \hat{y}_j(\rho^{\sigma})) \, (e^jD^2h(w^{\sigma}) \nabla w^{\sigma}-b^j) \, .
\end{align}
Let us consider in more details the right-hand of \eqref{NULL2}. First, the identity \eqref{HESS} implies that
\begin{align*}
 D^2_{j,k}h(w^{\sigma}) = \frac{\delta_{j,k}}{w^{\sigma}_j \, m_j} - \frac{1}{\hat{n}(w^{\sigma}) \, m_j\, m_k}  - \frac{g^{\prime}_j(p_0) \, g^{\prime}_k(p_0)}{ \sum_{\ell} g^{\prime\prime}(p_0) \, w^{\sigma}_\ell} \,,
 \end{align*}
 where we exploited that $w^{\sigma} \in S_0$ implies $\hat{p}(w^{\sigma}) = p_0$. For $i,k = 1,\ldots,N$, we define
 \begin{align}\label{matrixGamma} 
 \Gamma^{\sigma}_{i,k} := & \rho^{\sigma}_i \, \sum_{j=1}^N (\delta_{i,j} - \hat{y}_j(\rho^{\sigma})) \, D^2_{j,k}h(w^{\sigma})\nonumber\\
 = & \rho^{\sigma}_i \, \sum_{j=1}^N (\delta_{i,j} - \hat{y}_j(\rho^{\sigma})) \, (\frac{\delta_{j,k}}{w^{\sigma}_j \, m_j} - \frac{1}{\hat{n}(w^{\sigma}) \, m_j\, m_k}  - \frac{g^{\prime}_j(p_0) \, g^{\prime}_k(p_0)}{ \sum_{\ell} g^{\prime\prime}(p_0) \, w^{\sigma}_\ell})\nonumber\\
 = & \rho_{i}^{\sigma} \, \Big(\frac{\delta_{i,k}}{w^{\sigma}_i \, m_i} - \frac{1}{m_im_k \, \hat{n}(w^{\sigma})} - \frac{g^{\prime}_i(p_0) \, g^{\prime}_k(p_0)}{\sum_{\ell} g^{\prime\prime}(p_0) \, w^{\sigma}_\ell}\nonumber\\
  & \quad \quad \quad - \frac{\hat{y}_k(\rho^{\sigma})}{w^{\sigma}_k \, m_k} +  \frac{\hat{y}(\rho^{\sigma}) \cdot \frac{1}{m}}{\hat{n}(w^{\sigma}) \, m_k} + \frac{\hat{y}(\rho^{\sigma}) \cdot g^{\prime}(p_0) \, g_k^{\prime}(p_0)}{\sum_{\ell} g^{\prime\prime}(p_0) \, w^{\sigma}_\ell} \Big)\, .
  \end{align}
 Then, we can re-express the $d^{\sigma}$ in \eqref{NULL2} equivalently as
 \begin{align}\label{NULL3} 
 d^{\sigma,i} = \sum_{k=1}^N  \Gamma^{\sigma}_{i,k} \, \nabla w^{\sigma}_k - \rho^{\sigma}_i \, \sum_{j=1}^N (\delta_{i,j} - \hat{y}_j(\rho^{\sigma})) \, b^j \, .
\end{align}
From their definition in \eqref{matrixGamma}, the entries of the matrix $\Gamma^{\sigma}$ can be bounded via
\begin{align*} 
|  \Gamma^{\sigma}_{i,k}| \leq \frac{1}{\min m} \, (\frac{\rho^{\sigma}_i}{w^{\sigma}_i}+ \frac{\rho^{\sigma}_k}{w^{\sigma}_k}) + 2 \, (\frac{\max m}{\min m^2} + \frac{|g^{\prime}(p_0)|}{\min |g^{\prime\prime}(p_0)|}) \, \frac{\rho_i^{\sigma}}{w^{\sigma}\cdot 1^N} \, .
\end{align*}
For all $i,k$, the right-hand is a bounded function of the quotients $\rho^{\sigma}_i/w^{\sigma}_i = F_i(p^{\sigma}, \, w^{\sigma})$. From Lemma \ref{characteristics}, \eqref{Quotients}, we know that these quotients are bounded above by a function $\bar{F}$ (cp. \eqref{barF} for the explicit formula), which depends only on the first argument $p^{\sigma}$, and is continuous and bounded on compact sets of $]0,+\infty[$. Therefore, for all $\epsilon > 0$, there is $c(\epsilon)$ such that
\begin{align}\label{estimateGamma} 
 |\Gamma^{\sigma}(x, \, t)| \leq c(\epsilon) \text{ in } \{(x, \,t) \in Q_T \, : \, \epsilon \leq p^{\sigma}(x,t) \leq \epsilon^{-1}\} =: S^{\sigma}_{\epsilon} \, .
\end{align}
Next we prove for $\{\Gamma^{\sigma}\}$ some pointwise convergence. Recall that $\rho^{\sigma} \rightarrow \rho$ pointwise almost everywhere, and $w^{\sigma} \rightarrow w$ pointwise almost everywhere. Moreover, if $\rho_i(x, \, t) > 0$, then $w_i(x, \ t) > 0$: This is a consequence of the change of variables ( identity \eqref{representationw}). On the set
\begin{align*}
 A^+ := \{(x, \, t) \, : \, \rho_i(x, \, t) > 0 \text{ for } i = 1, \ldots,N\} \, ,
\end{align*}
we therefore find that $\Gamma^{\sigma}(x, \, t) \rightarrow \Gamma(x, \, t)$ for almost all $(x, \, t) \in A^+$, where $\Gamma(x,t)$ is the natural limit in \eqref{matrixGamma}. Now, we multiply the identity \eqref{NULL3} with $\phi^{\sigma,\epsilon} =$ indicator function of the set $A^+ \cap S^{\sigma}_{\epsilon}$ for $\epsilon$ fixed. In view of \eqref{estimateGamma}, the matrices $\phi^{\sigma,\epsilon} \, \Gamma^{\sigma}$ are bounded uniformly and pointwise convergent. Moreover, \eqref{robinsitu} allows us to assume that $\nabla w^{\sigma} \rightarrow \nabla w$ weakly in $L^2(Q_T)$. It follows from \eqref{NULL3} that
\begin{align}\label{troule}
  d^{\sigma,i} \, \phi^{\sigma,\epsilon} \rightarrow \phi^{0,\epsilon} \, (\sum_{k=1}^N  \Gamma_{i,k} \, \nabla w_k - \rho_i \, \sum_{j=1}^N (\delta_{i,j} - \hat{y}_j(\rho)) \, b^j) \,
 \end{align}
 in the sense of weak convergence in $L^2(Q_T)$. Here $\phi^{0,\epsilon}$ being the indicator function of the set $A^+ \cap S^0_{\epsilon}$ with $S^0_{\epsilon} := \{(x, \, t) \, : \, \epsilon \leq  p(x,t) \leq \epsilon^{-1}\}$. On the other hand, due to \eqref{NULL}, we have
 \begin{align*}
\phi^{\sigma,\epsilon} \,  B(\rho^{\sigma}) \, J^{\sigma} = \phi^{\sigma,\epsilon} \, ( - d^{\sigma,i} - B(\rho^{\sigma}) \, \tilde{\jmath}^{\sigma}) \, .
 \end{align*}
Since $\|\tilde{\jmath}^{\sigma}\|_{L^2} \leq C_0 \, \sqrt{\sigma}$, we pass to the limit in the latter relation. In the sense of weak convergence in $L^{2\gamma/(1+\gamma),2}(Q_T)$, we obtain that
 \begin{align}\label{boule} 
-    d^{\sigma,i} \, \phi^{\sigma,\epsilon} \rightarrow \phi^{0,\epsilon} \,  B(\rho) \, J \, .
\end{align}
Thus, comparing \eqref{troule} with \eqref{boule}, we obtain that
\begin{align}\label{maxstel} 
 B(\rho) \, J = -\sum_{k=1}^N  \Gamma_{i,k} \, \nabla w_k - \rho_i \, \sum_{j=1}^N (\delta_{i,j} - \hat{y}_j(\rho)) \, b^j
\end{align}
almost everywhere in $A^+ \cup S^0_{\epsilon}$. Now if $(x, \, t)$ is in $A^+$, then obviously $\sum_{i=1}^N \rho_i(x, \, t) > 0$ and $p(x, \, t) > 0$. Therefore, we see that $A^+ \subseteq \bigcup_{\epsilon > 0} S^0_{\epsilon}$, and indeed, \eqref{maxstel} holds true almost everywhere in $A^+$.

We would like to relate the right-hand of \eqref{maxstel} to the limit chemical potentials $\mu = \hat{\mu}(\hat{p}(\rho), \, \hat{x}(\rho))$.
By the construction of $\Gamma$ (see \eqref{matrixGamma}) we can at first rephrase \eqref{maxstel} as in the form \eqref{NULL2}
\begin{align}\label{NULL2pr} 
-(B(\rho) \, J)^i =  \rho_i \, \sum_{j=1}^N (\delta_{i,j} - \hat{y}_j(\rho)) \, (e^jD^2h(w) \nabla w-b^j) \, .
\end{align}
We next establish for our limit element $\rho$ the property \eqref{Condimu1} of weak solutions. For $\mu = \hat{\mu}(\hat{p}(\rho), \, \hat{x}(\rho)) = \nabla_{\rho} h(\rho)$, the trick \eqref{Proper} implies again that $\mathcal{P}_{\{1^N\}^{\perp}} \, \mu =  \mathcal{P}_{\{1^N\}^{\perp}}  \nabla_{\rho} h(w) $. Now, the vector $\nabla_{\rho} h(w)$ is, up to inessential constants, equal to $\ln \hat{x}^{\frac{1}{m}}(w)$, and there is always at least one among the fractions $\hat{x}_1(w), \ldots,\hat{x}_N(w)$ being larger than $N^{-1}$. Thus, boundedness of $\eta \cdot \ln \hat{x}^{\frac{1}{m}}(w)$ for all $\eta \in \{1^N\}^{\perp}$ implies the boundedness of all entries of $w$ from below.
With this argument, we can verify that for all $k > 1$, the truncations $[\mathcal{P}_{\{1^N\}^{\perp}} \, \nabla_{\rho} h(w)]^k$ are identical with $\mathcal{P}_{\{1^N\}^{\perp}} \, \nabla_{\rho} h([w]_{\ell(k)})$, where $[\cdot]_{\ell(k)}$ is some cut-off operator at small positive level. Such truncations are, with $w$ itself, elements of $W^{1,0}_2(Q_T; \, \mathbb{R}^N)$. Moreover, the map $r \mapsto \nabla_{\rho} h(r)$ is Lipschitz continuous in the subset of $\mathbb{R}^N_+$ such that $r_i \geq \ell(k) > 0$ for all $i$. Thus, we can verify $[\mathcal{P}_{\{1^N\}^{\perp}} \, \mu]^k \in W^{1,0}_2(Q_T; \, \mathbb{R}^N)$ for all $k$, proving the property \eqref{Condimu1}.

This allows to transform the relation \eqref{NULL2pr}. If $(x, \, t) \in A^+$, then we find $k$ large enough for which $\mu(x, \, t) = [\mu(x, \, t)]^k$ and $\mathcal{P}_{\{1^N\}^{\perp}} \, \mu = [\mathcal{P}_{\{1^N\}^{\perp}} \, \mu]^k$. We therefore can reinterpret the term $\sum_{j=1}^N (\delta_{i,j} - \hat{y}_j(\rho)) \, e^jD^2h(w) \nabla w$ in the right-hand of \eqref{NULL2pr} and this yields
\begin{align}\label{NULL3pr} 
-(B(\rho) \, J)^i = &  \rho_i \, \sum_{j=1}^N (\delta_{i,j} - \hat{y}_j(\rho)) \, (\nabla \mu_j-b^j) \, \nonumber\\
 =: & \rho_i \, \sum_{j=1}^N (\delta_{i,j} - \hat{y}_j(\rho)) \, (\nabla [\mathcal{P}_{\{1^N\}^{\perp}} \, \mu]_j^k-b^j)
\end{align}
This proves the first condition \eqref{maxstefweak1} for a weak solution of Maxwell-Stefan.

Finally we want to prove, as required in the second condition \eqref{maxstefweak2}, that $J^i = 0$ if $\rho_i = 0$ and $p(x, \, t) > 0$. Here we employ the intermediate result \eqref{zwischenstep} in the proof of Lemma \ref{ROBUSTLEM}. It shows for our approximations, that the quantities
$\sum_{\ell=1}^3\sum_{i=1}^N  \, \frac{1}{w_i^{\sigma}} \, \big(\tilde{J}^{\sigma,i}_{\ell}\big)^2$, where
\begin{align*}
 \tilde{J}^{\sigma} :=  J^{\sigma} - \frac{J^{\sigma} \cdot F(p^{\sigma},w^{\sigma})}{F(p^{\sigma},w^{\sigma})\cdot \hat{y}(\rho^{\sigma})} \, \hat{y}(\rho^{\sigma}) \, 
\end{align*}
are bounded above by the functions $\zeta_{\sigma} := - (\nabla \mu^{\sigma} - b) : J^{\sigma}$.
We multiply with the indicator functions $\phi^{\sigma,\epsilon}$. The sequence $ \tilde{J}^{\sigma} \,\phi^{\sigma,\epsilon}$ is again bounded in $L^{\frac{2\gamma}{1+\gamma},2}(Q_T)$. For its weak limit $\tilde{J}$, we obtain in the set $S^0_{\epsilon}$ the identity
\begin{align}\label{proust}
 \tilde{J} = J - \frac{J \cdot F(p, \, w)}{F(p, \, w)\cdot \hat{y}(\rho)} \, \hat{y}(\rho) \, .
\end{align}
As $(\tilde{J}^{\sigma,i})^2$ is bounded above by $\zeta^{\sigma}$ times $ w_i^{\sigma}$, we find that $\tilde{J} = 0$ for almost all $(x, \, t)$ in $S^0_{\epsilon}$ such that $w_i = 0$. But in the set $S^0_{\epsilon}$, we know that $w_i = 0$ is equivalent to $\rho_i = 0$. Hence, we can infer from \eqref{proust} that
\begin{align*}
 0 = J^i - \frac{J \cdot F(p, \, w)}{F(p, \, w)\cdot \hat{y}(\rho)} \, \hat{y}_i(\rho) = J^i \, 
\end{align*}
for almost all $(x, \, t)$ such that $\epsilon \leq p(x,t) \leq \epsilon^{-1}$ and $\rho_i(x,t) = 0$. This completes the proof that the weak solution $(\rho, \, v, \, J)$ also complies with the second condition \eqref{maxstefweak2}.

\section*{Acknowledgement}

I would like to thank Prof.~D.~Bothe for giving me insights into Maxwell-Stefan diffusion, and for the many fruitful discussions.


\begin{thebibliography}{DDGG17b}

\bibitem[BDa]{bothedruetfe}
D.~Bothe and P.-E. Druet.
\newblock The free energy of incompressible liquid mixtures: some mathematical
  insights.
\newblock In preparation.

\bibitem[BDb]{bothedruet}
D.~Bothe and P.-E. Druet.
\newblock Mass transport in multicomponent compressible fluids: local and
  global well-posedness in classes of strong solutions for general class-one
  models.
\newblock Available at {\footnotesize
  \verb|http://www.wias-berlin.de/preprint/2658/wias_preprints_2658.pdf|}, and
  at arXiv:2001.08970 [math.AP].

\bibitem[BDc]{bothedruetMS}
D.~Bothe and P.-E. Druet.
\newblock On the structure of continuum thermodynamical diffusion fluxes: a
  novel closure scheme and its relation to the {M}axwell-{S}tefan and the
  {F}ick-{O}nsager approach.
\newblock In preparation.

\bibitem[BD15]{bothedreyer}
D.~Bothe and W.~Dreyer.
\newblock Continuum thermodynamics of chemically reacting fluid mixtures.
\newblock {\em Acta Mech.}, 226:1757--1805, 2015.

\bibitem[BP17]{bothepruess}
D.~Bothe and J.~Pr\"{u}ss.
\newblock Modeling and analysis of reactive multi-component two-phase flows
  with mass transfer and phase transition -- the isothermal incompressible
  case.
\newblock {\em Discrete Contin. Dyn. Syst. Ser. S}, 10:673--696, 2017.

\bibitem[Brd81]{brdicka}
R.~Brdicka.
\newblock {\em Grundlagen der physikalischen Chemie}.
\newblock Deutscher Verlag der Wissenschaften, Berlin, 1981.

\bibitem[CJ15]{chenjuengel}
X.~Chen and A.~J{\"{u}}ngel.
\newblock Analysis of an incompressible {N}avier-{S}tokes-{M}axwell-{S}tefan
  system.
\newblock {\em Commun. Math. Phys.}, 340:471--497, 2015.

\bibitem[DDGG16]{dredrugagu16}
W.~Dreyer, P.-E. Druet, P.~Gajewski, and C.~Guhlke.
\newblock Existence of weak solutions for improved {N}ernst-{P}lanck-{P}oisson
  models of compressible reacting electrolytes.
\newblock Preprint 2291 of the Weierstrass Institute for Applied Analysis and
  Stochastics, Berlin, 2016.
\newblock Available at {\footnotesize
  \verb|http://www.wias-berlin.de/preprint/2291/wias_preprints_2291.pdf|}.

\bibitem[DDGG17a]{dredrugagu17a}
W.~Dreyer, P.-E. Druet, P.~Gajewski, and C.~Guhlke.
\newblock Analysis of improved {N}ernst-{P}lanck-{P}oisson models of
  compressible isothermal electrolytes. {P}art {I}: Derivation of the model and
  survey of the results.
\newblock 2017.
\newblock Revision submitted to ZAMP. 

\bibitem[DDGG17b]{dredrugagu17b}
W.~Dreyer, P.-E. Druet, P.~Gajewski, and C.~Guhlke.
\newblock Analysis of improved {N}ernst-{P}lanck-{P}oisson models of
  compressible isothermal electrolytes. {P}art {II}: Approximation and \emph{a
  priori} estimates.
\newblock 2017.
\newblock Revision submitted to ZAMP. 

\bibitem[DDGG17c]{dredrugagu17c}
W.~Dreyer, P.-E. Druet, P.~Gajewski, and C.~Guhlke.
\newblock Analysis of improved {N}ernst-{P}lanck-{P}oisson models of
  compressible isothermal electrolytes. {P}art {III}: Compactness and
  convergence.
\newblock 2017.
\newblock Revision submitted to ZAMP. 

\bibitem[Dru16]{dru16}
P.-E. Druet.
\newblock Analysis of improved {N}ernst-{P}lanck-{P}oisson models of isothermal
  compressible electrolytes subject to chemical reactions: {T}he case of a
  degenerate mobility matrix.
\newblock Preprint 2321 of the {W}eierstrass {I}nstitute for {A}pplied
  {A}nalysis and {S}tochastics, Berlin, 2016.
\newblock Available at {\footnotesize
  \verb|http://www.wias-berlin.de/preprint/2321/wias_preprints_2321.pdf|}.

\bibitem[Dru19]{druetmixtureincompweak}
P.-E. Druet.
\newblock Global--in--time existence for liquid mixtures subject to a
  generalised incompressibility constraint.
\newblock Preprint 2622 of the Weierstrass Institute for Applied Analysis and
  Stochastics, Berlin, 2019.
\newblock Available at {\footnotesize
  \verb|http://www.wias-berlin.de/preprint/2622/wias_preprints_2622.pdf|}.

\bibitem[FNP01]{feinovpet}
E.~Feireisl, A.~Novotn\`{y}, and H.~Petzeltov\`{a}.
\newblock On the existence of globally defined weak solutions to the
  {N}avier-{S}tokes equations.
\newblock {\em Journal of Mathematical Fluid Mechanics}, 3:358--392, 2001.

\bibitem[Gio99]{giovan}
V.~Giovangigli.
\newblock {\em Multicomponent Flow Modeling}.
\newblock Birkh\"{a}user, Boston, 1999.

\bibitem[HMPW17]{herbergpruess}
M.~Herberg, M.~Meyries, J.~Pr\"{u}ss, and M.~Wilke.
\newblock Reaction-diffusion systems of {M}axwell-{S}tefan type with reversible
  mass--action kinetics.
\newblock {\em Nonlinear Analysis: Theory, Methods \protect{\&} Applications},
  159:264--284, 2017.

\bibitem[Lio98]{lionsfils}
P.-L. Lions.
\newblock {\em Mathematical topics in fluid dynamics. Vol. 2, Compressible
  models}.
\newblock Oxford Science Publication, Oxford, 1998.

\bibitem[LSU68]{ladu}
Ladyzenskaja, Solonnikov, and Ural'ceva.
\newblock {\em Linear and Quasilinear Equations of Parabolic Type}, volume~23
  of {\em Translations of mathematical monographs}.
\newblock AMS, 1968.

\bibitem[MPZ15]{mupoza15}
P.B. Mucha, M.~Pokorny, and E.~Zatorska.
\newblock Heat-conducting, compressible mixtures with multicomponent diffusion:
  construction of a weak solution.
\newblock {\em SIAM J. Math. Anal.}, 47:3747--3797, 2015.

\bibitem[MT15]{mariontemam}
M.~Marion and R.~Temam.
\newblock Global existence for fully nonlinear reaction-diffusion systems
  describing multicomponent reactive flows.
\newblock {\em J. Math. Pures Appl.}, 104:102--138, 2015.

\bibitem[PSZ18]{piashiba18}
T.~Piasecki, Y.~Shibata, and E.~Zatorska.
\newblock On strong dynamics of compressible two-component mixture flow.
\newblock ArXiv.org. Mathematics. Analysis of {PDE}s, 2018.
\newblock arXiv:1709.09722v2 [math.AP].

\end{thebibliography}
\end{document}